\documentclass[preprint,12pt,3p]{elsarticle}

\usepackage{amssymb}
\usepackage{amsmath,amsthm}
\usepackage{mathrsfs}
\usepackage{hyperref}
\usepackage{cleveref}
\usepackage[all,cmtip]{xy}
\usepackage{epsf, graphicx}
\usepackage{latexsym,amsfonts,amsbsy,amssymb}
\usepackage{geometry}
\usepackage{titletoc}
\usepackage{color,xcolor}
\usepackage{rotating}
\usepackage{algorithm}
\usepackage{algorithmic}
\usepackage{amscd}

\makeatletter

\@addtoreset{equation}{section} \makeatother
\newtheorem{theorem}{Theorem}[section]
\newtheorem{lemma}[theorem]{Lemma}
\newtheorem{notation}[theorem]{Notation}
\newtheorem{conjecture}[theorem]{Conjecture}
\newtheorem{corollary}[theorem]{Corollary}
\newtheorem{remark}[theorem]{Remark}
\newtheorem{void}[theorem]{}
\newtheorem{example}[theorem]{Example}
\newtheorem{definition}[theorem]{Definition}
\newtheorem{proposition}[theorem]{Proposition}

\def\br{{\rm br}}

\def\Aut{{\rm Aut}}

\def\Hom{{\rm Hom}}

\def\End{{\rm End}}
\def\Gal{{\rm Gal}}
\def\Ob{{\rm Ob}}
\def\Mor{{\rm Mor}}

\def\C{\mathcal{C}}
\def\Id{{\rm Id}}
\def\G{\mathcal{G}}

\def\O{\mathcal{O}}

\def\bF{\overline{\F}}
\def\S{\mathcal{S}}
\def\E{\mathcal{E}}
\def\X{\mathcal{X}}
\def\F{\mathbb{F}}
\def\FF{\mathcal{F}}

\def\T{\mathcal{T}}

\def\W{\mathcal{W}}
\def\I{\mathcal{I}}

\def\Iso{{\rm Iso}}

\makeatletter
\def\ps@pprintTitle{%
\let\@oddhead\@empty
\let\@evenhead\@empty
\def\@oddfoot{\reset@font\hfil\thepage\hfil}
\let\@evenfoot\@oddfoot
}
\makeatother

\begin{document}

\begin{frontmatter}

\title{The Galois Alperin weight conjecture for finite category algebras}

\author{Xin Huang}


\begin{abstract}
Let $p$ be a prime, $k$ an algebraic closure of $\mathbb{F}_p$ and $\Gamma$ the Galois group ${\rm Gal}(k/\mathbb{F}_p)$. Let $\mathcal{C}$ be a finite category and $\mathcal{O}_{\mathcal{C}}$ the $p$-orbit category of $\mathcal{C}$ defined by Linckelmann \cite{Lin14}. We formulate a version of the Galois Alperin weight conjecture (GAWC) for finite category algebras stating that there exists a $\Gamma\times {\rm Aut}(\mathcal{C})$-equivariant bijection between the set of isomorphism classes of simple $k\mathcal{C}$-modules and that of the weights of $k\mathcal{O}_{\mathcal{C}}$. We reduce the GAWC for finite categories to finite groups. 
For $\mathcal{C}$ an EI-category, we give a partition of weights of $k\mathcal{O}_{\mathcal{C}}$ with respect to blocks of $k\mathcal{C}$ and then formulate a blockwise Galois Alperin weight conjecture (BGAWC) for $\mathcal{C}$. Similarly, we reduce the BGAWC for finite EI-categories to finite groups.

\end{abstract}

\begin{keyword}
Alperin's weight conjecture  \sep Galois automorphisms \sep category algebras \sep blocks \sep reduction theorems
\end{keyword}

\end{frontmatter}


\section{Introduction}\label{s1}

The (blockwise) Galois Alperin weight conjecture, due to Navarro \cite{Nav04}, a strong form of Alperin's weight conjecture \cite{Alp}, predicts that the fields of values of irreducible Brauer characters (of a block) of a finite group can be locally determined. In this paper we extend the (blockwise) Galois Alperin weight conjecture from finite groups to finite categories. 

Throughout this paper we fix a prime number $p$ and an algebraic closure $k$ of $\F_p$ and denote by $\Gamma$ the Galois group $\Gal(k/\F_p)$. A category $\C$ is called {\it finite} if its morphism class is a finite set. Any finite group can be viewed as a finite category with one object.

 Let $\C$ be a finite category. According to \cite[Definition 1.1]{Lin14}, the {\it $p$-transporter category of $\C$} is the finite category $\T_\C$
defined as follows. The objects of $\T_\C$ are the pairs $(X,P)$ consisting of an object $X$ of $\C$ and a not
necessarily unitary $p$-subgroup $P$ of the monoid $\End_\C(X)$. For any two objects $(X,P)$ and $(Y,Q)$, the
morphism set $\Hom_{\T_\C}((X,P), (Y,Q))$ is the set of all triples $(s, P, Q)$ where $s: X\to Y$ is a morphism
in $\C$ satisfying $s = s\circ 1_P = 1_Q \circ s$ and $s \circ P \subseteq Q \circ s$. The composition of morphisms in $\T_\C$ is induced
by that in $\C$.  The identity morphism of an object $(X,P)$ in $\T_\C$ is $(1_P,P,P)$. If no confusion arise, we will denote a morphism $(s,P,Q)$ in $\T_\C$ again by $s$. Allowing nonunitary subgroups $P$ of $\End_\C(X)$ in the definition of objects of $\T_\C$ means that the unit element $1_P$ of $P$ need not be equal to ${\rm Id}_X$ but can be any idempotent endomorphism of $X$. The condition $s=s\circ 1_P=1_Q\circ s$ in this definition implies that $\Hom_{\T_\C}((X,P),(Y,Q))$ can be identified to a subset of $1_Q\circ \Hom_\C(X,Y)\circ 1_P$. With this identification, the morphism set $\Hom_{\T_\C}((X,P),(Y,Q))$ is a $Q$-$P$-subbiset of $1_Q\circ \Hom_\C(X,Y)\circ 1_P$ with respect to the actions induced by precomposing with morphisms in $P$ and composing with morphisms in $Q$. The condition $s\circ P\subseteq Q\circ s$ in the above implies that $Q\circ s\circ P=Q\circ s$; that is, any $Q$-$P$-orbit in $\Hom_{\T_\C}((X,P),(Y,Q))$ is in fact a $Q$-orbit. According to \cite[Definition 1.2]{Lin14}, the {\it $p$-orbit category of $\C$} is the finite category $\O_\C$ defined
as follows. The objects of $\O_\C$ are the same as those of $\T_\C$. For any two objects $(X,P)$ and $(Y,Q)$ of $\O_\C$,
the morphism set $\Hom_{\O_\C}((X,P), (Y,Q))$ is the set of all $Q$-$P$-orbits $Q\backslash\Hom_{\T_\C}((X,P), (Y,Q))/P$ of
morphisms in $\T_\C$. The composition of morphisms in $\O_\C$ is induced by that in $\T_\C$.

For a finite-dimensional $k$-algebra $A$, denote by $\S(A)$ the set of isomorphism classes of simple $A$-modules. 
Following \cite{LS}, given a finite category $\C$, the isomorphism classes of simple
$k\C$-modules are parametrised by isomorphism classes of pairs $(e, T)$, with $e$ an idempotent endomorphism
of some object $X$ in $\C$ and $T$ a simple $kG_e$-module, where $G_e$ is the group of all invertible
elements in the monoid $e\circ \End_\C(X) \circ e$. Such a pair $(e, T)$ is called a {\it weight} of $k\C$ if the simple $kG_e$-module $T$ is in addition projective. Denote by $\W(k\C)$ the set of isomorphism classes of weights of $k\C$; 
we review this in Section \ref{s3:weights of O and proof of main} below. The group-theoretic version of Alperin’s weight conjecture in \cite{Alp} 
is equivalent to the following statement:

\begin{conjecture}[Alperin~{\cite{Alp}}]\label{conj:AWC for finite groups}
For any finite group $G$, there exists a bijection between $\S(kG)$ and $\W(k\O_{G})$.
\end{conjecture}

 Linckelmann \cite{Lin14} extended Conjecture \ref{conj:AWC for finite groups} to finite categories:

\begin{conjecture}[Linckelmann~{\cite[Conjecture 1.3]{Lin14}}]\label{conj:AWC for category algebras}
For any finite category $\C$, there exists a bijection between $\S(k\C)$ and $\W(k\O_{\C})$.
\end{conjecture}

In \cite[Corollary 1.5]{Lin14}, Linckelmann showed that Conjectures \ref{conj:AWC for category algebras} and \ref{conj:AWC for finite groups} are in fact equivalent. To consider the action of the Galois group $\Gamma$, we need the following notation:

\begin{notation}\label{nation:Galois twist}
{\rm Let $A$ be a finite-dimensional $\F_p$-algebra and let $A':=k\otimes_{\F_p} A$. Let $\sigma\in \Gamma$. We see that $\sigma$ induces an $\F_p$-algebra automorphism  of $A'$ sending $x\otimes a$ to $\sigma(x)\otimes a$ for any $x\in k$ and $a\in A$. We abusively use the same symbol $\sigma$ to denote this $\F_p$-algebra automorphism. Clearly $\Gamma$ permutes the set of (primitive) central idempotents of $A'$. For any $A'$-module $U$, denote by ${}^{\sigma}U$ the $A'$-module which is equal to $U$ as a $k$-module endowed with the structure homomorphism $A'\xrightarrow{\sigma^{-1}} A'\to \End_k(U)$. Sometimes we will also write ${}^\sigma U$ as ${}_\sigma U$. Note that if $f:U\to V$ is an $A'$-module homomorphism, then $f$ is also an $A'$-module homomorphism ${}^\sigma U\to {}^\sigma V$. One checks that the map sending $x\otimes a$ to $\sigma^{-1}(x)\otimes a$ induces an isomorphism of $A'$-modules $A'\cong {}^\sigma A'$, where $x\in k'$ and $a\in A$. Now it is clear that ${}^\sigma U$ is a simple (resp. projective) $A'$-module if and only if $U$ is simple (resp. projective). Hence the group $\Gamma$ acts on the set $\S(A')$. Moreover, if $b$ is a central idempotent of $A'$, then $\Gamma_b$ (the stabiliser of $b$ in $\Gamma$) acts on the set $\S(A'b)$. By Proposition \ref{prop: Galois action on pairs} below, if $\C$ is a finite category and $\alpha\in Z^2(\C;\F_p^\times)$, then $\Gamma$ also acts on the set of (isomorphism classes of) weights of $k_\alpha\C$.
}
\end{notation}


In \cite{Nav04}, Navarro predicted a Galois refinement of the Alperin weight conjecture for finite group which can be reformulated as follows:

\begin{conjecture}[Navarro~{\cite{Nav04}; cf. \cite[Conjecture]{Turull14}}]\label{conj:GAWC for finite groups}
For any finite group $G$, there exists a bijection $\S(kG)\to \W(k\O_G)$ commuting with the action of $\Gamma$.
\end{conjecture}	

This leads to the obvious extension to finite categories:	
	
\begin{conjecture}\label{conj:GAWC for category algebras}
For any finite category $\C$, there exists a bijection $\S(k\C)\to \W(k\O_\C)$ commuting with the action of $\Gamma$.
\end{conjecture}		

If we forget the action of $\Gamma$, then Conjecture \ref{conj:GAWC for category algebras} returns to Linckelmann's Conjecture \ref{conj:AWC for category algebras}. Similar to Linckelmann's result \cite[Theorem 1.4]{Lin14}, we show that Conjectures \ref{conj:GAWC for finite groups} and \ref{conj:GAWC for category algebras} are in fact equivalent. This equivalence holds more generally for twisted group algebras and twisted category algebras with a $2$-cocycle in $Z^2(\C;\F_p^\times)$ which is extendible to the orbit category. There are canonical functors from $\C$ to $\T_\C$ and $\O_\C$ sending an object $X$ in $\C$ to $(X, \{{\rm Id}_X\})$, and a
morphism in $\C$ to its obvious images in $\T_\C$ and $\O_\C$, respectively. In particular, every $2$-cocycle $\alpha$ in
$Z^2(\O_\C;\F_p^\times)$ restricts to a $2$-cocycle in $Z^2(\C;\F_p^\times)$, again denoted by $\alpha$.

\begin{theorem}\label{theorem:main}
Let $\C$ be a finite category and $\alpha\in Z^2(\O_\C;\F_p^\times)$. If for any idempotent endomorphism $e$ in $\C$, there is a $\Gamma$-equivariant bijection $\S(k_\alpha G_e)\to \W(k_\alpha \O_{G_e})$, then there exists a $\Gamma$-equivariant bijection $\S(k_\alpha \C)\to \W(k_\alpha\O_{\C})$.
\end{theorem}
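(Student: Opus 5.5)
We follow Linckelmann's proof of \cite[Theorem 1.4]{Lin14} and add a check of $\Gamma$-equivariance at each step. The plan is to build both $\S(k_\alpha\C)$ and $\W(k_\alpha\O_\C)$ from local data attached to idempotents of $\C$, using the parametrisation of \cite{LS}. Then the given bijections $\S(k_\alpha G_e)\to\W(k_\alpha\O_{G_e})$ are glued together over a set of representatives of the relevant isomorphism classes of idempotents.

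\textbf{Step 1: the simple modules.} By the twisted version of \cite{LS}, $\S(k_\alpha\C)$ is in bijection with isomorphism classes of pairs $(e,T)$. Here $e$ runs over a set $E$ of representatives of isomorphism classes of idempotent endomorphisms of objects of $\C$, and $T\in\S(k_\alpha G_e)$. The bijection sends $(e,T)$ to the unique simple quotient of the module induced from $T$ through $k_\alpha \C e$, suitably restricted. Since $\alpha$ takes values in $\F_p^\times$, the Galois twist ${}^\sigma(-)$ commutes with induction, restriction and taking heads. It also fixes $e$, because $\sigma$ acts trivially on $\F_p$-rational basis elements. Hence this parametrisation is $\Gamma$-equivariant, with $\Gamma$ acting on the $T$-component only, and we get a $\Gamma$-equivariant bijection
\[
\S(k_\alpha\C)\;\cong\;\textstyle\coprod_{e\in E}\S(k_\alpha G_e).
\]
This is the content one expects from Proposition \ref{prop: Galois action on pairs}, which we invoke.

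\textbf{Step 2: the weights of the orbit category.} Apply the same parametrisation to $\O_\C$. Weights of $k_\alpha\O_\C$ are pairs $(f,W)$ with $f$ an idempotent of $\O_\C$ and $W$ a projective simple module of the automorphism group of $f$, and these decompose $\Gamma$-equivariantly according to the underlying idempotent. The key structural input, exactly as in \cite{Lin14}, is a description of idempotents of $\O_\C$ and their automorphism groups. Up to isomorphism, an idempotent of $\O_\C$ lives at an object $(X,P)$ with $1_P=e$ an idempotent of $\C$, and $P$ is a $p$-subgroup of $G_e$. Its automorphism group is $N_{G_e}(P)/P$, or the corresponding group for the idempotent endomorphism of $(X,P)$ arising from $e$. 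Grouping the idempotents of $\O_\C$ by the isomorphism class of $e$ in $\C$, one identifies the part of $\W(k_\alpha\O_\C)$ lying over $e$ with $\W(k_\alpha\O_{G_e})$. Both are classes of pairs $(P,W)$, with $P$ a $p$-subgroup of $G_e$ up to $G_e$-conjugacy and $W$ a projective simple $k_\alpha N_{G_e}(P)/P$-module. Every map here is defined over $\F_p$, so $\Gamma$ acts on $W$ alone, and we get a $\Gamma$-equivariant bijection
\[
\W(k_\alpha\O_\C)\;\cong\;\textstyle\coprod_{e\in E}\W(k_\alpha\O_{G_e}).
\]

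\textbf{Step 3: gluing.} Composing the two decompositions with the hypothesised $\Gamma$-equivariant bijections for each $G_e$ gives the required $\Gamma$-equivariant bijection $\S(k_\alpha\C)\to\W(k_\alpha\O_\C)$.

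\textbf{Main obstacle.} Step 2 is the hard part, and within it the cocycle bookkeeping. One must check that restricting $\alpha\in Z^2(\O_\C;\F_p^\times)$ to the subcategory over $e$ agrees with the cocycle on $\O_{G_e}$ used in the hypothesis. One must also check that the identification of idempotents of $\O_\C$ up to isomorphism with pairs $(e,P)$ respects the twisted algebra structure. We would try first to quote Linckelmann's bijection verbatim and only verify that it comes from $\F_p$-rational isomorphisms of the relevant twisted group algebras. Equivariance then follows from the general fact that Galois twisting commutes with any functor defined over $\F_p$.
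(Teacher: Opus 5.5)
Your proposal is correct and follows essentially the same route as the paper. You decompose $\S(k_\alpha\C)$ $\Gamma$-equivariantly as $\bigsqcup_{e\in\E}\S(k_\alpha G_e)$ via the Linckelmann--Stolorz parametrisation, apply the hypothesised bijections, and identify $\bigsqcup_{e\in\E}\bigsqcup_{P\in\X_e}\mathcal{Z}(k_\alpha N_{G_e}(P)/P)$ with representatives of $\W(k_\alpha\O_\C)$ using Linckelmann's description of idempotents and maximal subgroups of $\T_\C$ and $\O_\C$; the only cosmetic difference is that the paper checks equivariance of the parametrisation directly via $e({}^\sigma S)\cong{}^\sigma(eS)$ and minimality of $e$, rather than through induction and heads.
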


This will be proved in Section \ref{s3:weights of O and proof of main}. 

\begin{corollary}
Conjectures \ref{conj:GAWC for finite groups} and \ref{conj:GAWC for category algebras} are equivalent.
\end{corollary}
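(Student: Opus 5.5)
The plan is to follow Linckelmann's proof of \cite[Theorem 1.4]{Lin14} and check that each bijection in it commutes with the action of $\Gamma$.

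\textbf{Step 1: decompose $\S(k_\alpha\C)$.} By the parametrisation of \cite{LS}, twisted by $\alpha$, $\S(k_\alpha\C)$ is in bijection with the set of isomorphism classes of pairs $(e,T)$. Here $e$ runs over a set $E$ of representatives of isomorphism classes of idempotent endomorphisms in $\C$, and $T$ is a simple $k_\alpha G_e$-module. The bijection sends $(e,T)$ to the unique simple quotient of the module induced from $T$ via $e$. I first check this is $\Gamma$-equivariant, with $\Gamma$ acting on the second coordinate. The Galois twist ${}^\sigma(-)$ commutes with induction and restriction along $k\otimes_{\F_p}$ of $\F_p$-algebra homomorphisms; this uses that $\alpha$ takes values in $\F_p^\times$, so that $k_\alpha\C=k\otimes_{\F_p}\F_{p,\alpha}\C$. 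Twisting also preserves tops and simplicity. This gives a $\Gamma$-equivariant bijection
\[\S(k_\alpha\C)\cong \bigsqcup_{e\in E}\S(k_\alpha G_e).\]

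\textbf{Step 2: decompose $\W(k_\alpha\O_\C)$.} Applying the same parametrisation to $\O_\C$, weights are pairs $(f,U)$ with $f$ an idempotent in $\O_\C$ and $U$ a projective simple $k_\alpha G_f$-module. Following Linckelmann, I reduce to idempotents of $\O_\C$ of the form $\bar e$ on an object $(X,P)$. Then $G_f$ is identified with $N_{G_e}(P)/P$, or rather with its $1_P$-analogue. This gives a bijection between $\W(k_\alpha\O_\C)$ and $\bigsqcup_{e\in E}\W(k_\alpha\O_{G_e})$, with $\W(k_\alpha\O_{G_e})$ indexed by $G_e$-classes of $p$-subgroups $P$ of $G_e$ and projective simple $k_\alpha N_{G_e}(P)/P$-modules. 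This is where Proposition \ref{prop: Galois action on pairs} enters: $\Gamma$ acts only on the module component of a pair, and the identifications of the automorphism groups are induced by $\F_p$-rational structure. Hence the bijection is $\Gamma$-equivariant, provided the restrictions of $\alpha$ match on both sides.

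\textbf{Step 3: assemble.} I compose with the assumed $\Gamma$-equivariant bijections $\S(k_\alpha G_e)\to\W(k_\alpha\O_{G_e})$ for each $e\in E$, and take the disjoint union over $e\in E$. A disjoint union of $\Gamma$-equivariant bijections, composed with the equivariant bijections of Steps 1 and 2, is $\Gamma$-equivariant, which gives the statement.

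The main obstacle is Step 2. I have to verify carefully that Linckelmann's combinatorial bijection between idempotents and automorphism groups of $\O_\C$ and those of $\O_{G_e}$ is compatible with the cocycle $\alpha$, including the identification of $\alpha$ restricted to $G_f$ with the cocycle used on $\O_{G_e}$. Since $\alpha$ lives in $Z^2(\O_\C;\F_p^\times)$, Galois twisting fixes it, and the identification is defined over $\F_p$, so equivariance should follow. First, however, I would check that any rescaling of basis elements needed to match cocycles can be chosen with coefficients in $\F_p^\times$.
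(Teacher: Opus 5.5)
Your proposal is correct and follows the paper's route: the corollary is Theorem~\ref{theorem:main} with $\alpha$ trivial, proved exactly as you describe. That proof combines the $\Gamma$-equivariant Linckelmann--Stolorz bijection $\Pi$, Linckelmann's description of the idempotents of $\O_\C$ and their maximal subgroups $N_{G_e}(P)/P$, and the assumed group-level bijections (the paper checks equivariance of $\Pi$ via $S\mapsto(e,eS)$ rather than via induction, which is immaterial). Since only $\alpha=1$ is needed, the cocycle-matching obstacle you flag does not arise, and you should add the one-line converse: a finite group $G$ is a one-object category with $\O_\C=\O_G$, so Conjecture~\ref{conj:GAWC for category algebras} specialises to Conjecture~\ref{conj:GAWC for finite groups}.
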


\begin{remark}
{\rm (i) In Theorem \ref{theorem:main}, the reason why we require $\alpha$ to be a $2$-cocycle with coefficients in $\F_p^\times$ is that in this case we have $k_\alpha \C \cong k\otimes_{\F_p} (\F_p)_{\alpha}\C$, and then the Galois group $\Gamma$ has an action on $\S(k_\alpha \C)$ as described in Notation \ref{nation:Galois twist}.

(ii) Note that the formulation in Theorem \ref{theorem:main} for twisted category algebras requires the $2$-cocycle $\alpha$ to be the restriction to $\C$ of a $2$-cocycle of $\O_\C$ along the canonical functor $\C\to \O_C$. It is not clear whether the map $H^2(\O_\C;\F_p^\times)\to H^2(\C;\F_p^\times)$ induced by the canonical functor $\C\to \O_\C$ is injective or surjective in general. The corresponding canonical functor $\C\to \T_\C$ sending $X$ to $(X,\{{\rm Id}_X\})$ poses no problem:
}
\end{remark}

\begin{proposition}[cf. {\cite[Proposition 1.6]{Lin14}}]
Let $\C$ be a finite category. The canonical functor $\C\to \T_\C$ induces a graded isomorphism $H^*(\T_\C;\F_p^\times)\cong H^*(\C;\F_p^\times)$.
\end{proposition}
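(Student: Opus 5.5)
Following \cite[Proposition 1.6]{Lin14}, I would show that the canonical functor $\iota:\C\to\T_\C$, $X\mapsto (X,\{{\rm Id}_X\})$, is one half of an adjunction. A functor with an adjoint induces a homotopy equivalence of classifying spaces. Cohomology of a finite category with coefficients in the trivial module $\F_p^\times$ is the cohomology of its nerve, so it would then follow that $\iota$ induces a graded isomorphism $H^*(\T_\C;\F_p^\times)\cong H^*(\C;\F_p^\times)$.

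\textbf{Step 1: the candidate adjoint.} Define $\pi:\T_\C\to\C$ by sending $(X,P)$ to $X$ and a morphism $(s,P,Q)$ to $s$. This is a functor because composition in $\T_\C$ is induced by that in $\C$. The composite $\pi\circ\iota$ is the identity functor on $\C$. It remains to relate $\iota\circ\pi$ to the identity of $\T_\C$ by a natural transformation.

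\textbf{Step 2: the natural transformation.} For each object $(X,P)$ I take the morphism $\eta_{(X,P)}=(1_P,\{{\rm Id}_X\},P):(X,\{{\rm Id}_X\})\to(X,P)$. It is a legitimate morphism of $\T_\C$ because the three defining conditions hold:
\begin{itemize}
\item $1_P\circ{\rm Id}_X=1_P$;
\item $1_P\circ 1_P=1_P$;
\item $1_P\circ\{{\rm Id}_X\}=\{1_P\}\subseteq P\circ 1_P$.
\end{itemize}
Naturality with respect to a morphism $s:(X,P)\to(Y,Q)$ requires $s\circ 1_P=1_Q\circ s$ as morphisms $(X,\{{\rm Id}_X\})\to(Y,Q)$. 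Both sides equal $s$ by the defining condition $s=s\circ 1_P=1_Q\circ s$.

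\textbf{Step 3: conclusion.} So $\eta$ is a natural transformation $\iota\pi\Rightarrow{\rm Id}_{\T_\C}$, and together with $\pi\iota={\rm Id}_\C$ this exhibits $\iota$ as left adjoint to $\pi$. Since any natural transformation gives a homotopy between the induced maps on classifying spaces, $B\iota$ and $B\pi$ are mutually inverse homotopy equivalences. Hence $\iota^*$ is an isomorphism in every degree, and it respects cup products since it is induced by a functor. For cohomology with trivial coefficient module I would invoke the identification $H^*(\C;M)\cong H^*(B\C;M)$, or equivalently the Ext description via the constant functor.

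I expect the main obstacle to be bookkeeping rather than anything conceptual. The morphism $1_P$ is generally not ${\rm Id}_X$, so one must check carefully that $\eta$ and the morphisms it is composed with satisfy the idempotent conditions. One should also confirm that cohomology here means cohomology of the constant functor $\F_p^\times$, so that the topological invariance argument applies.
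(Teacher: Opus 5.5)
Your argument fails at Step~1, because $\pi$ is not a functor. The identity morphism of $(X,P)$ in $\T_\C$ is $(1_P,P,P)$, and $\pi$ sends it to $1_P$. This differs from $\Id_X$ whenever $P$ is non-unitary, for instance $P=\{e\}$ with $e\neq\Id_X$ an idempotent. Compatibility with composition does not give preservation of identities.

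Several consequences follow:
\begin{itemize}
\item $\iota\circ\pi$ is not an endofunctor of $\T_\C$.
\item The maps induced by $\pi$ and $\iota\pi$ on nerves do not commute with degeneracies, so the classifying-space homotopy argument does not apply.
\item $\iota$ is not left adjoint to $\pi$. Indeed $\Hom_{\T_\C}(\iota Z,(X,P))=1_P\circ\Hom_\C(Z,X)$, and for $Z=X$ this is a proper subset of $\Hom_\C(X,X)$, since it does not contain $\Id_X$.
\end{itemize}
Nor can you repair this by finding another adjoint. A right adjoint of $\iota$ would have to represent $Z\mapsto 1_P\circ\Hom_\C(Z,X)$, and by Yoneda this forces $1_P$ to split in $\C$. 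That fails in any finite monoid with an idempotent $e\neq 1$, because there $r\circ i=1$ forces $i\circ r=1$. So the point you set aside as bookkeeping is exactly where the proof breaks.

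Your check that $\eta_{(X,P)}$ is a morphism and that the naturality squares commute is correct. When $\C$ is EI, every $P$ is unitary and your argument works as written. The paper gives no independent proof here; it simply imports Linckelmann's argument.

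You can close the gap in either of two ways.

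\textbf{(a) Quillen's Theorem~A.} For an object $(X,P)$, the comma category $\iota\downarrow(X,P)$ has:
\begin{itemize}
\item objects $(Z,s)$ with $s\in 1_P\circ\Hom_\C(Z,X)$;
\item morphisms $(Z,s)\to(Z',s')$ given by those $u\in\Hom_\C(Z,Z')$ with $s'\circ u=s$.
\end{itemize}
It contains the object $(X,1_P)$, which corresponds to your $\eta_{(X,P)}$. The morphisms $s\colon(Z,s)\to(X,1_P)$ are legitimate because $1_P\circ s=s$. They form a natural transformation from the identity functor to the constant functor at $(X,1_P)$. Hence every comma category is contractible, and $B\iota$ is a homotopy equivalence.

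\textbf{(b) Keep your $\pi$ and $\eta$, but work at cochain level.} Use the unnormalised complex of functions on strings of composable morphisms. This computes $H^*(-;\F_p^\times)$, and the paper's cocycles are unnormalised anyway. Its coboundary only composes adjacent morphisms and drops the end ones. Therefore the composition-preserving $\pi$ still induces a cochain map $\pi^*$. The usual prism formula
\[(h\varphi)(f_n,\ldots,f_1)=\sum_{i=0}^{n}\pm\,\varphi(f_n,\ldots,f_{i+1},\eta_{x_i},\iota\pi f_i,\ldots,\iota\pi f_1),\]
where $x_0$ is the source of $f_1$ and $x_i$ the target of $f_i$, gives a homotopy between $(\iota\pi)^*$ and the identity. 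It uses only the naturality of $\eta$ and never identity morphisms. Together with $\pi\circ\iota=\Id_\C$, this shows that $\iota^*$ is an isomorphism in every degree.
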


As Linckelmann mentioned in \cite{Lin14}, it is less clear what happens under the functor $\T_\C\to \O_\C$ in general. We have the following special case for EI-categories (i.e. categories in which all endomorphisms are isomorphisms):

\begin{proposition}[cf. {\cite[Proposition 1.7]{Lin14}}]
Let $\C$ be a finite EI-category. Then $\T_\C$ and $\O_\C$ are EI-categories, and the canonical functor $\C\to\O_\C$ induces a graded isomorphism $H^*(\O_\C;\F_p^\times)\cong H^*(\C;\F_p^\times)$.	 
\end{proposition}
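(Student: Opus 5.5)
The plan is to follow Linckelmann's proof of \cite[Proposition 1.7]{Lin14}, which is stated for coefficients $k^\times$ (or an arbitrary abelian coefficient module). We argue only with the structure of the categories, so replacing the coefficient group by $\F_p^\times$ changes nothing.

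\emph{Step 1: the EI-property.}
First we show that $\T_\C$ and $\O_\C$ are EI-categories.
Since $\C$ is EI, every endomorphism monoid $\End_\C(X)$ is a group. Its only idempotent is ${\rm Id}_X$, so every $p$-subgroup $P$ is unitary with $1_P={\rm Id}_X$.
An endomorphism of $(X,P)$ in $\T_\C$ is an element $s\in\End_\C(X)$ with $sP\subseteq Ps$. Since $s$ is invertible and $P$ is finite, this forces $sPs^{-1}=P$. The inverse $s^{-1}$ then also normalises $P$, so $s^{-1}$ is again an endomorphism of $(X,P)$ in $\T_\C$. Hence $\End_{\T_\C}((X,P))=N_{\End_\C(X)}(P)$ is a group.
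Passing to $P$-orbits gives $\End_{\O_\C}((X,P))=N_{\End_\C(X)}(P)/P$, which is also a group. So both categories are EI.

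\emph{Step 2: the step $\C\to\T_\C$.}
By the preceding proposition, the canonical functor $\C\to\T_\C$ induces an isomorphism in cohomology with coefficients in $\F_p^\times$. (The idea there is that the inclusion $\C\to\T_\C$ has a left or right adjoint, namely the functor $(X,P)\mapsto X$ sending $s$ to $s$. Adjoint functors induce homotopy equivalences of classifying spaces, hence isomorphisms in cohomology with constant coefficients.)
The functor $\C\to\O_\C$ factors as $\C\to\T_\C\to\O_\C$. It therefore suffices to show that the quotient functor $\pi:\T_\C\to\O_\C$ induces an isomorphism $H^*(\O_\C;\F_p^\times)\cong H^*(\T_\C;\F_p^\times)$.

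\emph{Step 3: the quotient functor $\pi$ (main obstacle).}
Here the EI-structure is essential. I would use the standard spectral sequence, or equivalently a Quillen fibre argument, comparing the cohomology of $\T_\C$ with that of $\O_\C$ via the right Kan extension along $\pi$. Concretely, $H^*(\T_\C;\F_p^\times)$ is computed as $\mathrm{Ext}^*_{\F_p\O_\C}$ against the pushforward $R\pi_*$ of the constant functor. The value of $R^q\pi_*(\F_p^\times)$ at $(Y,Q)$ is the cohomology of the comma category $(Y,Q)\downarrow\pi$.
Because $\pi$ only quotients morphisms $(X,P)\to(Y,Q)$ by $Q$-orbits, these fibre categories should reduce, up to equivalence, to groupoids of the form $Q$ acting freely. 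More precisely, the fibre over an object is equivalent to a category whose classifying space is $BQ$, or its variant, for a $p$-group $Q$.
Since $\F_p^\times$ is a group of order $p-1$, prime to $p$, we have $H^q(Q;\F_p^\times)=0$ for all $q>0$ and $H^0=\F_p^\times$. Hence $R^q\pi_*$ vanishes for $q>0$ and $R^0\pi_*$ is constant. The spectral sequence then collapses and gives the isomorphism.
The delicate point is identifying the fibre categories precisely. I would first try to show that each fibre has a terminal object up to a free $Q$-action, so that it is equivalent to the one-object category $Q$. This uses the EI-property: every endomorphism in the fibre is invertible, so all morphisms are determined by $Q$-translates.
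Finally, naturality of the construction gives compatibility with the graded ring structure, so the isomorphism is graded.
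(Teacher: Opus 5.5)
Your Steps 1 and 2 are correct. In the EI case $1_P=\Id_X$, so $(X,P)\mapsto X$ is a genuine functor and a right adjoint of $\C\to\T_\C$. Your top-level plan is also all the paper does: it gives no argument beyond citing \cite[Proposition 1.7]{Lin14} and remarking that the proof there never uses that the field is algebraically closed.

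Since you go on to supply the argument yourself, note that your reconstruction of Step 3, which carries all the content, does not hold up as written.

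\emph{The comma category you chose.} In $(Y,Q)\downarrow\pi$, the morphisms from $((Y,Q),\overline{\Id_Y})$ to an object $((X,P),P\circ u)$ form the set $P\circ u$. So this category is governed by the groups $P$ at the other objects, not by $Q$. Already for $\C=C_p$ and $(Y,Q)=(\ast,1)$ it is equivalent to the category with two objects $a,z$, no morphisms $z\to a$, $\Aut(a)=1$, and $\Aut(z)=C_p$ acting simply transitively on $\Hom(a,z)$. Its classifying space is $BC_p$, not $B1$.

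\emph{The other comma category.} The category $\pi\downarrow(Y,Q)$ is the one attached to contravariant coefficients, i.e.\ to the Kan extension along $\pi^{\mathrm{op}}$. It does contain an object $t=((Y,Q),\overline{\Id_Y})$ with $\Aut(t)=Q$ and $\Hom(((X,P),Q\circ g),t)=Q\circ g$. However, $Q$ acts on this set transitively and in general not freely. Being EI says nothing about morphisms being monomorphisms, so $u\circ g=g$ with $u\neq\Id_Y$ can happen. For example, let $\C$ have objects $X,Y$ with $\End_\C(X)=1$, $\End_\C(Y)=Q\neq 1$ and $\Hom_\C(X,Y)=\{s\}$. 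Then $((X,1),\bar s)$ is an initial object of $\pi\downarrow(Y,Q)$, which is therefore contractible rather than $BQ$. So both the claim that the fibres have classifying space $BQ$ and your justification of it from the EI property are false.

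\emph{What suffices.} It is enough, and true, that each $\mathcal{D}=\pi\downarrow(Y,Q)$ is $\F_p^\times$-acyclic.
\begin{itemize}
\item The slice $\mathcal{D}\downarrow t$ has the terminal object $(t,\Id_t)$, so its nerve $K$ is contractible.
\item The group $Q=\Aut(t)$ acts on $K$ by postcomposing the structure maps to $t$. Since every $\Hom_{\mathcal{D}}(x,t)$ is a single $Q$-orbit, the orbit simplicial set $K/Q$ is exactly the nerve of $\mathcal{D}$.
\item Hence $C^*(\mathcal{D};\F_p^\times)=C^*(K;\F_p^\times)^Q$. As $|Q|$ is invertible on $\F_p^\times$, taking $Q$-fixed points is exact, so $H^*(\mathcal{D};\F_p^\times)\cong H^*(K;\F_p^\times)^Q$, which is $\F_p^\times$ concentrated in degree $0$.
\end{itemize}
The Kan extension spectral sequence then collapses, and its edge map, induced by $\pi$, is the required isomorphism. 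This step uses only that $Q$ is a $p$-group and that $\Hom_{\O_\C}((X,P),(Y,Q))=Q\backslash\Hom_{\T_\C}((X,P),(Y,Q))$. The EI hypothesis is not what drives it; it is needed for the first assertion of the proposition.

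Two smaller slips:
\begin{itemize}
\item The relevant Ext groups are over $\mathbb{Z}\O_\C$, since $\F_p^\times$ is not an $\F_p$-module.
\item $H^*(-;\F_p^\times)$ carries no ring structure, so ``graded'' here just means degreewise.
\end{itemize}
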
 

One checks that the proofs of \cite[Propositions 1.6, 1.7, 4.6]{Lin14} did not use the blanket assumption there that the ground field is algebraically closed - we can replace the coefficient field there by any perfect field of characteristic $p$.

\begin{example}
{\rm Brauer algebras, Temperley--Lieb algebras, partition algebras, and their cyclotomic analogues can be interpreted as twisted monoid algebras (cf. \cite{Wilcox}). The $2$-cocycles of the underlying	monoids for these algebras satisfy the hypotheses of \cite[Proposition 4.6]{Lin14}; in particular, they are constant on maximal subgroups (so that their restrictions to maximal subgroups represent the trivial classes) and they extend to the associated orbit categories. Using a recent result in \cite{DH} that the BGAWC holds for symmetric groups, it is easy to see that the maximal subgroups of the underlying diagram monoids satisfy the GAWC, and hence so do these diagram algebras.} 
\end{example}

Since there is a blockwise Galois Alperin weight conjecture (BGAWC) for finite group, one will ask whether there is a BGAWC for a finite category $\C$. For this question we need to first give a partition of the weights of the orbit category algebra $k\O_\C$ with respect to blocks of $k\C$. Unfortunately, for the general case we didn't find such a partition. But if $\C$ is an EI-category, we can give such a partition; see Definition \ref{defi:partition of weights by blocks} below. For any central idempotent $b$ of a finite EI-category $\C$, we define the notion of a $b$-weight of $k\O_\C$. If $b=1$, then $\W(k\O_\C,b)$ returns to $\W(k\O_\C)$. We can easily show that $\Gamma_b$ acts on $\W(k\O_\C,b)$; see Definition \ref{defi:partition of weights by blocks}.

A finite group can be regarded as a finite EI-category, and the BGAWC for finite groups can be reformulated as follows:

\begin{conjecture}[Navarro~{\cite{Nav04}; cf. \cite[Conjecture 2]{H24}}]\label{conj:BGAWC for finite groups}
For any finite group $G$ and any central idempotent $b$ of $kG$, there exists a bijection $\S(kGb)\to \W(k\O_G,b)$ commuting with the action of $\Gamma_b$.
\end{conjecture}	

This leads to the obvious extension to finite EI-categories:	

\begin{conjecture}\label{conj:BGAWC for category algebras}
	For any finite EI-category $\C$ and any central idempotent $b$ of $k\C$, there exists a bijection $\S(k\C b)\to \W(k\O_\C,b)$ commuting with the action of $\Gamma_b$.
\end{conjecture}		

If we take $b=1$, then Conjecture \ref{conj:BGAWC for category algebras} returns to Conjecture \ref{conj:GAWC for category algebras}.
We show that Conjectures \ref{conj:BGAWC for finite groups} and \ref{conj:BGAWC for category algebras} are in fact equivalent. As in the block-free case, this equivalence holds more generally for twisted group algebras and twisted category algebras:

\begin{theorem}\label{theorem:main2}
	Let $\C$ be a finite EI-category, $b$ a central idempotent of $k\C$, and $\alpha\in Z^2(\O_\C;\F_p^\times)$. Then for any idempotent endomorphism $e$ in $\C$, setting $\hat{e}=\alpha(e,e)^{-1}e$, $\hat{e}b$ is a central idempotent in $k_\alpha G_e$; see Proposition \ref{prop:eb is in Ge} below. If for any idempotent endomorphism $e$ in $\C$, there is a $\Gamma_b$-equivariant bijection $\S(k_\alpha G_e \hat{e}b)\to \W(k_\alpha \O_{G_e},\hat{e}b)$, then there exists a $\Gamma_b$-equivariant bijection $\S(k_\alpha \C b)\to \W(k_\alpha\O_{\C},b)$. 
\end{theorem}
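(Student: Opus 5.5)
We reduce to the idempotent-by-idempotent parametrisations on both sides and match them $\Gamma_b$-equivariantly. As in Linckelmann's argument \cite{Lin14} and in the proof of Theorem \ref{theorem:main}, we use the Linckelmann--Stolorz parametrisation. First, $\S(k_\alpha\C)$ is in bijection with $\Gamma$-stable isomorphism classes of pairs $(e,T)$, with $e$ running over representatives $E$ of isomorphism classes of idempotent endomorphisms (for an EI-category, the identities $\Id_X$ of representatives of isomorphism classes of objects) and $T\in\S(k_\alpha G_e)$. The simple module $S_{e,T}$ is the head of the module induced from $T$ through $\hat e\, k_\alpha\C\,\hat e$. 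Second, in Section \ref{s3:weights of O and proof of main} one gets the analogous description of $\W(k_\alpha\O_\C)$, obtained by applying the weight parametrisation to the category $\O_\C$. We then check that, for $\C$ EI, the set $\W(k_\alpha\O_\C)$ decomposes as a $\Gamma$-equivariant disjoint union over $e\in E$ of $\W(k_\alpha\O_{G_e})$.

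The second step is to refine both decompositions by $b$. For the simples this is straightforward. By Proposition \ref{prop:eb is in Ge}, $\hat e b$ is a central idempotent of $k_\alpha G_e$. Moreover $bS_{e,T}=S_{e,T}$ if and only if $\hat e b T=T$, since $\hat eS_{e,T}\cong T$ as $k_\alpha G_e$-modules and $b$ commutes with $\hat e$. This gives a $\Gamma_b$-equivariant bijection
\[\S(k_\alpha\C b)\cong \textstyle\bigsqcup_{e\in E}\S(k_\alpha G_e\hat e b).\]
The point to check is $\Gamma_b$-equivariance: for $\sigma\in\Gamma_b$ we have ${}^\sigma S_{e,T}\cong S_{e,{}^\sigma T}$, because $\sigma$ fixes $e$ and $\alpha$ has values in $\F_p^\times$. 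For the weights we invoke Definition \ref{defi:partition of weights by blocks}. I expect it to define a $b$-weight of $k_\alpha\O_\C$ attached to $e$ precisely so that the corresponding weight of $\O_{G_e}$ lies in $\W(k_\alpha\O_{G_e},\hat e b)$, perhaps via a Brauer-type correspondence $\hat e b\mapsto$ a block of $k_\alpha N_{G_e}(P)$. If so, we get
\[\W(k_\alpha\O_\C,b)\cong\textstyle\bigsqcup_{e}\W(k_\alpha\O_{G_e},\hat e b),\]
again $\Gamma_b$-equivariantly, because $\sigma$ commutes with the Brauer map and fixes $\hat e b$ when it fixes $b$.

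Finally, we assemble the hypothesised $\Gamma_b$-equivariant bijections $\S(k_\alpha G_e\hat e b)\to\W(k_\alpha\O_{G_e},\hat e b)$ over $e\in E$. Since each piece is $\Gamma_b$-stable, the disjoint union is again $\Gamma_b$-equivariant, which gives the required bijection. The main obstacle is the weight side of the second step: matching the definition of $b$-weights for $\O_\C$ with the block-restricted weights for $\O_{G_e}$. This requires identifying the group $G_{(e,P)}$ in $\O_\C$ with $N_{G_e}(P)/P$, and checking that the twist of $\alpha$ restricted there agrees with the one used for $\O_{G_e}$; this is where the hypothesis $\alpha\in Z^2(\O_\C;\F_p^\times)$ is used. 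If the definition is phrased through $\C$ rather than $G_e$, I would first prove a compatibility lemma: $b$ acts on $\hat e$-truncations through $\hat eb\in Z(k_\alpha G_e)$, so the two notions of block membership agree.
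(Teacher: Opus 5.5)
Your proposal is correct and follows essentially the paper's route: the $\Gamma_b$-equivariant blockwise parametrisation $\Pi_b$ (Theorem \ref{theo:blockwise bijection between simple moudles and pairs} with Proposition \ref{prop:pi_b commutes with gamma_b}), then the hypothesised bijections for $e\in\E$, then the identification of $\bigsqcup_{e\in\E}\bigsqcup_{P\in\X_e}$ of projective simple $k_\alpha N_{G_e}(P)/P$-modules with $\W(k_\alpha\O_\C,b)$ via Lemmas \ref{lemma: L14 Lemma 3.5} and \ref{lemma:maximal subgroups of orbit category}. Your guess about Definition \ref{defi:partition of weights by blocks} is exactly right: $(\bar e,T)$ is a $b$-weight iff $\br_P^{k_\alpha G_e}(\hat e b)T\neq 0$, i.e.\ iff $(P,T)\in\W(k_\alpha\O_{G_e},\hat e b)$, and well-definedness is Proposition \ref{prop:prove b-weights are well-defined}, so no extra compatibility lemma is needed (and ``$\Gamma$-stable isomorphism classes'' in your first step should read ``$\Gamma$-equivariantly in bijection with isomorphism classes'').
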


This will be proved in Section \ref{s4:Partition of weights} after we investigate the Brauer construction on twisted group algebras in Section \ref{s:Brauer construction}.

\begin{corollary}
	Conjectures \ref{conj:BGAWC for finite groups} and \ref{conj:BGAWC for category algebras} are equivalent.
\end{corollary}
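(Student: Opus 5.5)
The corollary asserts that Conjecture \ref{conj:BGAWC for finite groups} and Conjecture \ref{conj:BGAWC for category algebras} are equivalent. I would prove the two implications separately, and the substantive direction comes straight from Theorem \ref{theorem:main2} with the trivial cocycle.

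\emph{From categories to groups.} Suppose Conjecture \ref{conj:BGAWC for category algebras} holds. A finite group $G$ is a finite EI-category with one object, whose endomorphisms are all invertible. Under this identification:
\begin{itemize}
\item the category algebra $k\C$ is just $kG$;
\item the orbit category $\O_\C$ is Linckelmann's $\O_G$;
\item the $b$-weights of Definition \ref{defi:partition of weights by blocks} specialise to the group-theoretic $b$-weights.
\end{itemize}
The last point is how Conjecture \ref{conj:BGAWC for finite groups} was reformulated in the first place. So Conjecture \ref{conj:BGAWC for finite groups} is the special case $\C=G$ of Conjecture \ref{conj:BGAWC for category algebras}. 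The only thing to check is that the definitions agree on the nose, including the $\Gamma_b$-actions, and this is a matter of unwinding the definitions.

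\emph{From groups to categories.} Suppose Conjecture \ref{conj:BGAWC for finite groups} holds, and let $\C$ be a finite EI-category and $b$ a central idempotent of $k\C$. Take $\alpha$ to be the trivial $2$-cocycle in $Z^2(\O_\C;\F_p^\times)$, so that $k_\alpha\C=k\C$ and $k_\alpha\O_\C=k\O_\C$.

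For each idempotent endomorphism $e$ of $\C$, the EI hypothesis forces $e={\rm Id}_X$, since $e$ is invertible and idempotent. Hence $\hat{e}=e$. By Proposition \ref{prop:eb is in Ge}, $eb$ is a central idempotent of $kG_e$, where $G_e=\Aut_\C(X)$.

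Conjecture \ref{conj:BGAWC for finite groups}, applied to $G_e$ and $eb$, gives a $\Gamma_{eb}$-equivariant bijection $\S(kG_e eb)\to\W(k\O_{G_e},eb)$. We need equivariance for $\Gamma_b$ instead. Since $e$ is a morphism of $\C$, every $\sigma\in\Gamma$ fixes it, so $\sigma(eb)=e\sigma(b)$. Therefore $\Gamma_b\subseteq\Gamma_{eb}$, and the bijection is in particular $\Gamma_b$-equivariant.

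Theorem \ref{theorem:main2} now yields a $\Gamma_b$-equivariant bijection $\S(k\C b)\to\W(k\O_\C,b)$, which is Conjecture \ref{conj:BGAWC for category algebras} for $\C$ and $b$.

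\emph{Degenerate case.} If $eb=0$, both sides of the bijection for $G_e$ are empty, so the hypothesis of Theorem \ref{theorem:main2} holds trivially. I expect the reduction itself to be entirely carried by Theorem \ref{theorem:main2}.

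The only genuine point of care is the step $\Gamma_b\subseteq\Gamma_{eb}$, which upgrades $\Gamma_{eb}$-equivariance to $\Gamma_b$-equivariance. It relies on the fact that the Galois action is defined through the $\F_p$-form $\F_p\C$ and therefore fixes the basis elements $e$.
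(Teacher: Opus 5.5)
Your proposal is correct and follows essentially the argument the paper intends. The category-to-group direction is the one-object special case, and the group-to-category direction is Theorem \ref{theorem:main2} with the trivial cocycle. Your check that $\Gamma_b\subseteq\Gamma_{eb}$, which turns the $\Gamma_{eb}$-equivariant bijections from Conjecture \ref{conj:BGAWC for finite groups} into the $\Gamma_b$-equivariant ones required by Theorem \ref{theorem:main2}, is a step the paper leaves implicit, and you handle it correctly.
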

    
Let $\C$ be finite category. Recall that a functor $\FF:\C\to \C$ is called an {\it automorphism} of $\C$ if there is another functor $\G:\C\to \C$ such that $\FF\circ \G={\rm Id}_\C=\G\circ \FF$. Let $\Aut(\C)$ denote the group of automorphisms of the category $\C$. Clearly if $\C$ is a finite group $G$, then $\Aut(\C)=\Aut(G)$. We can formulate analogues of Conjectures \ref{conj:AWC for category algebras} and \ref{conj:BGAWC for category algebras} with $\Aut(\C)$ instead of $\Gamma$. We need the following notation:

\begin{notation}\label{nation:automorphisms of category}
	{\rm Let $\FF$ be an automorphism of a finite category $\C$. Then for any objects $X$ and $Y$ in $\C$, $\FF$ induces an isomorphism $\FF_{X,Y}:\Hom_\C(X,Y)\cong \Hom_\C(\FF(X),\FF(Y))$; for shorthand we abusively denote the isomorphism $\FF_{X,Y}$ by $\FF$. As $X$ and $Y$ run over all objects in $\C$, those isomorphisms $\FF_{X,Y}$ induce a $k$-algebra automorphism of $k\C$ sending any $s\in \Hom_\C(X,Y)$ to $\FF(s)$; we still use the symbol $\FF$ to denote this automorphism of $k\C$.  Clearly $\FF$ permutes the set of (primitive) central idempotents of $k\C$. For any $k\C$-module $U$, denote by ${}_{\FF} U$ the $k\C$-module which is equal to $U$ as a $k$-module endowed with the structure homomorphism $k\C\xrightarrow{\FF^{-1}} k\C\to \End_k(U)$. Note that if $f:U\to V$ is an $k\C$-module homomorphism, then $f$ is also a $k\C$-module homomorphism ${}_{\FF} U\to {}_{\FF} V$. One checks that the map sending $a$ to $\FF^{-1}(a)$ induces an isomorphism of $k\C$-modules $k\C\cong {}_{\FF} k\C$, where $a\in k\C$. Now it is clear that ${}_{\FF} U$ is a simple (resp. projective) $k\C$-module if and only if $U$ is simple (resp. projective). Hence the group $\Aut(\C)$ acts on the set $\S(k\C)$. Moreover, if $b$ is a central idempotent of $k\C$, then $\Aut(\C)_b$ (the stabiliser of $b$ in $\Aut(\C)_b$) acts on the set $\S(k\C b)$. By \ref{defi: action of aut on weights of orbit category} (i) below, $\Aut(\C)$ also acts on the set of (isomorphism classes of) weights of $k\O_\C$. If $\C$ is an EI-category, then $\Aut(\C)_b$ acts on $\W(k\O_\C,b)$; see \ref{defi: action of aut on weights of orbit category} (ii).
	}
\end{notation}

The following conjecture is extracted from the inductive (blockwise) Alperin weight condition:

\begin{conjecture}[Navarro--Tiep {\cite[3.2]{NT}}; Sp\"{a}th {\cite[Definition 4.2 (ii)]{S}}]\label{conj:AW condition for finite groups}
	
	\begin{enumerate}[{\rm (i)}]
	
		\item  	For any finite group $G$, there exists a bijection $\S(kG)\to \W(k\O_G)$ commuting with the action of $\Aut(G)$.
		
		\item For any finite group $G$ and any central idempotent $b$ of $kG$, there exists a bijection $\S(kGb)\to \W(k\O_G,b)$ commuting with the action of $\Aut(G)_b$.
	\end{enumerate}

\end{conjecture}	

This leads to the following extension to finite categories:	

\begin{conjecture}\label{conj:AW Condition for category algebras}
\begin{enumerate}[{\rm (i)}]
	\item For any finite category $\C$, there exists a bijection $\S(k\C)\to \W(k\O_\C)$ commuting with the action of $\Aut(\C)$.
	\item For any finite EI-category $\C$ and any central idempotent $b$ of $k\C$, there exists a bijection $\S(k\C b)\to \W(k\O_\C,b)$ commuting with the action of $\Aut(\C)_b$.
\end{enumerate}	
\end{conjecture}

Combining Conjectures \ref{conj:AW Condition for category algebras} and \ref{conj:BGAWC for category algebras}, there is an even stronger conjecture:

\begin{conjecture}\label{conj:AW Condition plus Galois for category algebras}
Let $\C$ be a finite category (resp. EI-category) and $b$ the unit element (resp. a central idempotent) of $k\C$. Then there exists a bijection $\S(k\C b)\to \W(k\O_\C,b)$ commuting with the action of $(\Gamma\times\Aut(\C))_b$, the stabiliser of $b$ in $\Gamma\times\Aut(\C)$.
\end{conjecture}


As before, we reduce Conjectures \ref{conj:AW Condition for category algebras} and \ref{conj:AW Condition plus Galois for category algebras} to finite groups:

\begin{theorem}\label{theorem:main3}
	
	Let $\C$ be a finite category (resp. EI-category) and $b$ the unit element (resp. a central idempotent) of $k\C$.	
Then for any idempotent endomorphism $e$ in $\C$, $eb$ is a central idempotent in $k G_e$;  see Proposition \ref{prop:eb is in Ge}. Let $H$ be any subgroup of $\Gamma_b$. Assume that for any idempotent endomorphism $e$ in $\C$ there is an $(H\times\Aut(G_e))_{be}$-equivariant bijection $\S(k G_e eb)\to \W(k \O_{G_e},eb)$. Then there exists an $(H\times\Aut(\C))_b$-equivariant bijection $\S(k \C b)\to \W(k\O_{\C},b)$.
\end{theorem}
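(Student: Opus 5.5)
The approach is the one behind Theorems \ref{theorem:main} and \ref{theorem:main2}: decompose both sides along the parametrisation by pairs $(e,T)$, and carry the local bijections along these decompositions while tracking the action of $\Gamma\times\Aut(\C)$. The only new point is that automorphisms of $\C$ move idempotents. So equivariance has to be arranged across $(H\times\Aut(\C))_b$-orbits of idempotents, not idempotent by idempotent.

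First, following \cite{LS}, $\S(k\C b)$ is in bijection with the isomorphism classes of pairs $(e,T)$. Here $e$ runs over idempotent endomorphisms of $\C$ up to the equivalence used in \cite{LS} (conjugacy/isomorphism of idempotents), and $T\in\S(kG_e eb)$; the latter uses Proposition \ref{prop:eb is in Ge}. Similarly, as in the proof of Theorem \ref{theorem:main} (via \cite[Theorem 1.4]{Lin14}), $\W(k\O_\C,b)$ decomposes as a disjoint union over the same set of idempotent classes $[e]$ of pieces identified with $\W(k\O_{G_e},eb)$. This identification comes from the idempotents of $\O_\C$ of the form $(X,P)$ with $1_P=e$, whose automorphism groups are $N_{G_e}(P)/P$. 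I take these decompositions for granted, as established in Sections \ref{s3:weights of O and proof of main} and \ref{s4:Partition of weights}. I then verify that both decompositions are compatible with the group action. An element $(\sigma,\FF)\in(H\times\Aut(\C))_b$ sends the pair $(e,T)$ to $(\FF(e),{}^{\sigma}{}_{\FF}T)$. Here $\FF$ restricts to a group isomorphism $G_e\to G_{\FF(e)}$ and to a corresponding isomorphism $\O_{G_e}\cong\O_{G_{\FF(e)}}$. This is a routine check from Notation \ref{nation:automorphisms of category} and \ref{defi: action of aut on weights of orbit category}.

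Next, fix a set $\mathcal{E}$ of representatives of the $(H\times\Aut(\C))_b$-orbits on the classes $[e]$. For $e\in\mathcal{E}$, let $K_e$ be the stabiliser of $[e]$. Modulo inner automorphisms coming from isomorphisms in $\C$ (which act trivially on both sides), $K_e$ maps into $H\times\Aut(G_e)$ and fixes $eb$, so its image lies in $(H\times\Aut(G_e))_{be}$. The hypothesis then gives a $K_e$-equivariant bijection $\S(kG_e eb)\to\W(k\O_{G_e},eb)$. Transport it to every other idempotent class in the orbit of $[e]$ by choosing a coset representative $g$ and conjugating by $g$. This is well-defined precisely because the bijection at $e$ is $K_e$-equivariant. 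The standard orbit-induction argument then shows that the glued map is $(H\times\Aut(\C))_b$-equivariant.

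The main obstacle is the compatibility step. We must check that the isomorphism $(e,T)\mapsto(\FF(e),{}_{\FF}T)$ agrees with the identification of the weight pieces of $\O_\C$ for $\FF(e)$. In particular, the choice of representative idempotent within a class, and the inner twist by an isomorphism $e\cong e'$, must act trivially on both $\S$ and $\W$. For this I will use that conjugation by elements of $G_e$ (and by isomorphisms between idempotents) induces inner automorphisms, which fix isomorphism classes of simple modules and of weights. In the EI case, I also need that $b$-weights are preserved by this identification. This follows from the Brauer-construction description in Section \ref{s:Brauer construction}, since $\FF$ commutes with Brauer maps.
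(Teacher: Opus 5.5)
Your proposal is correct, but it organises the equivariance differently from the paper.

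\textbf{The paper's route.} In Propositions \ref{prop:two conditions} and \ref{prop:Satisfy condition i} the paper never uses stabilisers. It defines a coarse relation $e\sim f$ on all idempotents: some abstract isomorphism $\psi:G_e\cong G_f$ satisfies $\psi(eb)=fb$. It keeps $\Omega_e:=\Omega'_e$ on a set of $\sim$-representatives and transports it to every other idempotent along an arbitrary such $(\sigma,\psi)$. Well-definedness comes from the full $(H\times\Aut(G_e))_{eb}$-equivariance. The resulting family $\{\Omega_e\}$ is then checked against two explicit coherence conditions: compatibility with each $(\sigma,\FF)$, and compatibility with $\Lambda_{e,f}$, $\Omega_{e,f}$ for isomorphic $e,f$. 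Finally it is glued over the representatives $\E$ of isomorphism classes, the group acting on the disjoint unions through $\Lambda_{\FF(e),e'}$ and $\Omega_{\FF(e),e'}$.

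\textbf{Your route.} You run the standard orbit-stabiliser induction for $(H\times\Aut(\C))_b$ acting on isomorphism classes of idempotents. This needs the local bijection only at orbit representatives, and only its equivariance for the image of $K_e$ in $(H\times{\rm Out}(G_e))_{eb}$. So it proves the theorem under a weaker hypothesis, and independence of choices is automatic.

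\textbf{What your route costs.} The price is exactly the step you flag. Each $(\sigma,\FF)\in K_e$ must act on the $\S$-piece and on the $\W$-piece through the same automorphism of $G_e$, modulo inner automorphisms. That automorphism is $\FF_e$ followed by the transport $G_{\FF(e)}\cong G_e$ of Proposition \ref{prop:isomorphic idempotents} (i).
\begin{itemize}
\item Both actions go through this map because $\Lambda_{\FF(e),e}$ and $\Omega_{\FF(e),e}$ are induced by the same transport. For groups, any admissible $s,t$ may be used in Proposition \ref{proposition:bijection between isomorphic weights commutes with Galois}, by the uniqueness in part (i) there.
\item This automorphism fixes $eb$, by Proposition \ref{proposition:varphi(hat(e)b)=hat(f)b} together with $\FF(b)=b$. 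The latter holds since $H\le\Gamma_b$ forces $(H\times\Aut(\C))_b=H\times\Aut(\C)_b$.
\end{itemize}

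\textbf{What the paper's route buys.} It avoids stabilisers and quotients by inner automorphisms altogether. In exchange, it spends the full hypothesis at every representative to obtain a coherent family indexed by all idempotents.
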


This will be proved in Section \ref{s:Automorphisms of categories}. 

\begin{corollary}
If Conjecture \ref{conj:AW Condition plus Galois for category algebras} holds for all finite groups, then it holds for all finite categories. In particular, Conjectures \ref{conj:AW condition for finite groups} and \ref{conj:AW Condition for category algebras} are equivalent.
\end{corollary}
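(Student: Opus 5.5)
The plan is to deduce the corollary directly from Theorem \ref{theorem:main3}, in the same way the earlier corollaries follow from Theorems \ref{theorem:main} and \ref{theorem:main2}.

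First, suppose Conjecture \ref{conj:AW Condition plus Galois for category algebras} holds for all finite groups. Let $\C$ be a finite category, or an EI-category, and let $b$ be the unit element, respectively a central idempotent, of $k\C$. For every idempotent endomorphism $e$ in $\C$, Proposition \ref{prop:eb is in Ge} shows that $eb$ is a central idempotent of $kG_e$. Apply the group case of the conjecture to $G_e$, regarded as a one-object EI-category, with the central idempotent $eb$. This gives a bijection $\S(kG_e eb)\to\W(k\O_{G_e},eb)$ that is equivariant for $(\Gamma\times\Aut(G_e))_{eb}$.

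Next, take $H=\Gamma_b$ in Theorem \ref{theorem:main3}. Since $H\times\Aut(G_e)\le\Gamma\times\Aut(G_e)$, the stabiliser $(H\times\Aut(G_e))_{eb}$ is a subgroup of $(\Gamma\times\Aut(G_e))_{eb}$. So the bijection from the first step is also equivariant for this smaller group, and the hypothesis of Theorem \ref{theorem:main3} holds. The theorem then gives a bijection $\S(k\C b)\to\W(k\O_\C,b)$ that is $(\Gamma_b\times\Aut(\C))_b$-equivariant. An element $(\sigma,\FF)$ of $(\Gamma\times\Aut(\C))_b$ need not have $\sigma\in\Gamma_b$, so the two groups must be compared. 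Since $b$ is a central idempotent of $k\C$, it is a $k$-linear combination of morphisms.

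\begin{itemize}
\item In the unit case $b=1$ this comparison is trivial.
\item In the EI case it is the one point I expect to need care. Theorem \ref{theorem:main3} says ``$H$ any subgroup of $\Gamma_b$'', and the intended reading must be that the resulting group equals $(\Gamma\times\Aut(\C))_b$. If it is not, I would instead rerun the proof of Theorem \ref{theorem:main3} with $\Gamma\times\Aut(\C)$ acting jointly. That proof only uses that the combined action permutes the pairs $(e,T)$ compatibly, so the same argument should carry over.
\end{itemize}

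For the ``in particular'' statement, take $H=1$ in Theorem \ref{theorem:main3}. Conjecture \ref{conj:AW condition for finite groups} (ii), applied to each $G_e$ with the central idempotent $eb$, then yields Conjecture \ref{conj:AW Condition for category algebras} (ii). Taking $b=1$, Conjecture \ref{conj:AW condition for finite groups} (i) likewise yields Conjecture \ref{conj:AW Condition for category algebras} (i). The converse direction is immediate: a finite group $G$ is a one-object EI-category with $\Aut(\C)=\Aut(G)$, and the category $\O_\C$ it gives is the same as $\O_G$. Hence the category conjectures specialise to the group conjectures, and the two are equivalent.
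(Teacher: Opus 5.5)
Your route is the intended one. The paper gives no separate argument and reads the corollary off Theorem~\ref{theorem:main3}. Your treatment of the case $b=1$, of the ``in particular'' part via $H=1$, and of the converse via one-object categories is fine.

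The gap is exactly at the point you flag, and your first suggested resolution does not work. If $H\le\Gamma_b$, then $(\sigma,\FF)\in H\times\Aut(\C)$ fixes $b$ if and only if $\FF(b)=\sigma^{-1}(b)=b$. Hence $(H\times\Aut(\C))_b=H\times\Aut(\C)_b$, and this can be a proper subgroup of $(\Gamma\times\Aut(\C))_b$. Already for $\C=C_3$ and $p=2$, the Frobenius and the inversion automorphism each interchange the two non-principal blocks. Their product therefore stabilises each of these blocks, while neither factor does. So Theorem~\ref{theorem:main3} with $H=\Gamma_b$ yields only $\Gamma_b\times\Aut(\C)_b$-equivariance. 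This is strictly weaker than what Conjecture~\ref{conj:AW Condition plus Galois for category algebras} demands, and it is not a matter of interpretation.

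Your fallback is the correct fix, but it must be carried out rather than asserted. It amounts to taking $H=\Gamma$ in Theorem~\ref{theorem:main3}: the restriction $H\le\Gamma_b$ in its statement is never used in the proof. Propositions~\ref{prop:pi_b commutes with aut_b} and~\ref{prop:two conditions}, as well as~\ref{defi: action of aut on weights of orbit category}(ii), are already formulated for arbitrary $H\le\Gamma$. The Galois component meets $b$ in only two places.

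\begin{enumerate}
\item Take $(\sigma,\FF)\in(\Gamma\times\Aut(\C))_b$ and a simple $kG_e eb$-module $T$. The twist ${}^\sigma({}_\FF T)$ is a module over $kG_{\FF(e)}\FF(e)\sigma(\FF(b))=kG_{\FF(e)}\FF(e)b$. Likewise $\sigma\FF(\br_P^{kG_e}(eb))=\br_{\FF(P)}^{kG_{\FF(e)}}(\FF(e)b)$. So only $\sigma\FF(b)=b$ is used, never $\sigma(b)=b$.
\item In Proposition~\ref{prop:Satisfy condition i}, the bijection $\Omega_f$ is transported from $\Omega_e$ along a pair $(\sigma,\psi)$ with $\psi(eb)=f\sigma^{-1}(b)$. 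This condition differs from $\psi(eb)=fb$ only when $\sigma\notin\Gamma_b$. The result is independent of the pair, because two such pairs differ by an element of $(\Gamma\times\Aut(G_e))_{eb}$, for which $\Omega_e$ is equivariant.
\end{enumerate}

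With $H=\Gamma$, the hypothesis of Theorem~\ref{theorem:main3} is literally Conjecture~\ref{conj:AW Condition plus Galois for category algebras} for each pair $(G_e,eb)$. Its conclusion is literally that conjecture for $(\C,b)$, which closes the argument.
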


	
	

\section{Twisted category algebras and their idempotent endomorphisms}\label{s2:Twisted category algebras and their idempotent endomorphisms}

Let $R$ be a commutative ring. For two $R$-algebras $A$ and $B$, an $R$-algebra isomorphism $\varphi:A\to B$ and an $A$-module $U$, we denote by ${}_\varphi U$ the $B$-module which is equal to $U$ as an $R$-module endowed with the structure homomorphism $B\xrightarrow{\varphi^{-1}} A\to \End_R(U)$. Unless otherwise specified, all modules (actions) in this paper are left modules (actions).  Let $\C$ be a finite category. The set of idempotent endomorphisms of objects in $\C$ is partially ordered, with partial order given by $e\leq f$ whenever $e$ and $f$ are idempotents endomorphisms of an object $X$ in $\C$ satisfying $e=e\circ f=f\circ e$. Two idempotent endomorphisms $e$ and $f$ of objects $X$ and $Y$, respectively, are called {\it isomorphic} if there are morphisms $s:X\to Y$ and $t:Y\to X$ satisfying $t\circ s=e$ and $s\circ t=f$. In this case, $s$ and $t$ can be chosen such that $s=f\circ s=s\circ e$ and $t=e\circ t=t\circ f$; indeed, using that $e$ and $f$ are idemptents, we have $e=t\circ s=e\circ t\circ s\circ t\circ s\circ e=(e\circ t\circ f)\circ(f\circ s\circ e)$, and a similar argument yields $f=(f\circ s\circ e)\circ(e\circ s\circ f)$.  Let $\alpha$ be a $2$-cocycle in $Z^2(\C;R^\times)$; that is, $\alpha$ is a map sending any two morphisms $s$ and $t$ in ${\rm Mor}(\C)$ for which $t\circ s$ is defined to a element $\alpha(t,s)$ in $R^\times$, such that for any three morphisms $s$, $t$ and $u$ for which the compositions $t\circ s$ and $u\circ t$ are defined, we have the {\it $2$-cocycle identity} $\alpha(u,t\circ s)\alpha(t,s)=\alpha(u\circ t,s)\alpha(u,t)$. The {\it twisted category algebra} $R_\alpha \C$ is the free $R$-module having the morphism set ${\rm Mor}(\C)$ as an $R$-basis, with an $R$-bilinear multiplication given by $ts=\alpha(t,s)t\circ s$ if $t\circ s$ is defined, and $ts=0$ otherwise. The $2$-cocycle identity is equivalent to the associativity of this multiplication. The isomorphism class of $R_\alpha \C$ depends only on the class of $\alpha$ in $H^2(\C;R^\times)$; see e.g. \cite[Theorem 1.4.7 (iii)]{Lin18a}. So if $\alpha$ represents the trivial class in $H^2(\C;R^\times)$ then $R_\alpha\C\cong R\C$, the usual category algebra of $\C$ over $R$. For any idempotent endomorphism $e$ of an object $X$ in $\C$, we denote by $G_e$ the group of all invertible elements in the monoid $e\circ \End_\C(X)\circ e$; following Linckelmann \cite{Lin14}, we call such a group $G_e$ {\it a maximal subgroup of $\C$}. The restriction of $\alpha$ to the group $G_e$ is abusively again denoted by $\alpha$. Note that the image in $R_\alpha\C$ of an idempotent endomorphism $e$ of an object in $\C$ is not necessarily an idempotent; more precisely, $e^2$ in $R_\alpha\C$ is equal to $\alpha(e,e)e$, and hence $\hat{e}=\alpha(e,e)^{-1}e$ is an idempotent in $R_\alpha\C$. However, we have $\hat{e}R_\alpha \C\hat{e}=eR_\alpha\C e$. Note that if $\C$ is an EI-category, then for any object $X$ in $\C$, ${\rm Id}_X$ is the unique idempotent in $\End_\C(X)$ and we have $G_{{\rm Id}_X}=\End_\C(X)$.

\begin{proposition}\label{prop: unit element}
Let $\C$ be a finite category and let $\alpha\in Z^2(\C;R^\times)$. The unit element of $R_\alpha\C$ is $\sum_{X\in \Ob(\C)}\alpha(\Id_X,\Id_X)^{-1}\Id_X$.
\end{proposition}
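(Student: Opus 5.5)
We set $u=\sum_{X\in \Ob(\C)}\alpha(\Id_X,\Id_X)^{-1}\Id_X$ and check directly that $us=s=su$ for every basis element $s\in\Mor(\C)$. By bilinearity this suffices, since $\Mor(\C)$ is an $R$-basis of $R_\alpha\C$. The key input is an identity for the cocycle on identity morphisms, which we get by specialising the $2$-cocycle identity.

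Fix a morphism $s:X\to Y$. In the product $us$, the only summand of $u$ that composes with $s$ is $\Id_Y$. So $us=\alpha(\Id_Y,\Id_Y)^{-1}\alpha(\Id_Y,s)\,s$, and we must show $\alpha(\Id_Y,s)=\alpha(\Id_Y,\Id_Y)$. To get this, apply the $2$-cocycle identity $\alpha(u',t\circ s')\alpha(t,s')=\alpha(u'\circ t,s')\alpha(u',t)$ with $u'=t=\Id_Y$ and $s'=s$. It reads $\alpha(\Id_Y,s)\alpha(\Id_Y,s)=\alpha(\Id_Y,s)\alpha(\Id_Y,\Id_Y)$. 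Cancelling the unit $\alpha(\Id_Y,s)\in R^\times$ gives $\alpha(\Id_Y,s)=\alpha(\Id_Y,\Id_Y)$, so $us=s$.

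For the right-hand side, only $\Id_X$ contributes to $su$, giving $su=\alpha(\Id_X,\Id_X)^{-1}\alpha(s,\Id_X)\,s$. Apply the cocycle identity with $u'=s$ and $t=s'=\Id_X$. It reads $\alpha(s,\Id_X)\alpha(\Id_X,\Id_X)=\alpha(s,\Id_X)\alpha(s,\Id_X)$. Cancelling yields $\alpha(s,\Id_X)=\alpha(\Id_X,\Id_X)$, hence $su=s$.

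There is no real obstacle here. The only points needing care are choosing the right specialisations of the cocycle identity and noting that the cancellation is legitimate because $\alpha$ takes values in $R^\times$. Finiteness of $\Ob(\C)$ guarantees that $u$ is a genuine element of $R_\alpha\C$, and the uniqueness of a unit element then identifies $u$ as the unit.
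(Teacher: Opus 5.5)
Your proof is correct and matches the paper's argument: you check $us=s=su$ on basis morphisms, using the $2$-cocycle identity at $(\Id_Y,\Id_Y,s)$ and $(s,\Id_X,\Id_X)$ to get $\alpha(\Id_Y,s)=\alpha(\Id_Y,\Id_Y)$ and $\alpha(s,\Id_X)=\alpha(\Id_X,\Id_X)$. You also state explicitly the cancellation by elements of $R^\times$, which the paper leaves implicit.
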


\begin{proof}
For any objects $X',Y$ in $\C$ and $s\in \Hom_\C(X',Y)$, we have 
$$s(\sum_{X\in \Ob(\C)}\alpha(\Id_X,\Id_X)^{-1}\Id_X)=\alpha(\Id_{X'},\Id_{X'})^{-1}\alpha(s,\Id_{X'})s=s,$$
where the second equality holds by the $2$-cocycle identity applied with $s$, $\Id_{X'}$, $\Id_{X'}$. Similarly, 
$$(\sum_{X\in \Ob(\C)}\alpha(\Id_X,\Id_X)^{-1}\Id_X)s=\alpha(\Id_{Y},\Id_{Y})^{-1}\alpha(\Id_{Y},s)s=s,$$
where the second equality holds by the $2$-cocycle identity applied with $\Id_Y$, $\Id_Y$, $s$.
\end{proof}

\begin{proposition}[{\cite[Propositions 5.2, 5.4]{LS}}]\label{prop:isomorphic idempotents}
Let $\C$ be a finite category and let $\alpha\in Z^2(\C;R^\times)$.	If $e$ and $f$ are isomorphic idempotents in $\C$, then there is an $R$-algebra isomorphism $\varphi: R_\alpha G_e \cong R_\alpha G_f$, which is uniquely determined up to an inner automorphism. More explicitly, the following hold:
	\begin{enumerate}[{\rm (i)}]
		\item Assume that $e$ and $f$ are idempotent endomorphisms of objects $X$ and $Y$, respectively, in $\C$. Assume that $s\in f\circ \Hom_\C(X,Y)\circ e$ and $t\in e\circ \Hom_\C(Y,X)\circ f$ satisfying $t\circ s=e$ and $s\circ t=f$. For $x\in G_e$, set $\beta(x)=\alpha(x,t)\alpha(s,x\circ t)\alpha(e,e)^{-1}\alpha(t,s)^{-1}$. Then the map $\varphi$ sending $x\in G_e$ to $\beta(x)(s\circ x\circ t)$ induces an $R$-algebra isomorphism $R_\alpha G_e\cong R_\alpha G_f$.  
		
		\item Assume that $s'\in f\circ \Hom_\C(X,Y)\circ e$ and $t'\in e\circ \Hom_\C(Y,X)\circ f$ satisfying $t'\circ s'=e$ and $s'\circ t'=f$. For any $y\in G_f$, set $\beta'(y)=\alpha(y,s')\alpha(t',y\circ s')\alpha(f,f)^{-1}\alpha(s',t')^{-1}$. Then the map sending $x\in G_e$ to $\beta(x)\beta'(s\circ x\circ t)(t'\circ s\circ x\circ t\circ s')$ induces an inner automorphism of the $R$-algebra $R_\alpha G_e$; it coincides with conjugation by $t'\circ s$ in $k_\alpha G_e$.
	\end{enumerate}
\end{proposition}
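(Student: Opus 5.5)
The plan is to verify everything by direct computation in $R_\alpha\C$, using only the $2$-cocycle identity and the relations $t\circ s=e$, $s\circ t=f$, $s=f\circ s\circ e$, $t=e\circ t\circ f$.

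\emph{Part (i).} First I check that $\varphi$ takes values in $R_\alpha G_f$. For $x\in G_e$ with inverse $x'$ in $e\circ\End_\C(X)\circ e$, the element $s\circ x\circ t$ lies in $f\circ\End_\C(Y)\circ f$. It is invertible there with inverse $s\circ x'\circ t$, because $s\circ x\circ t\circ s\circ x'\circ t=s\circ x\circ e\circ x'\circ t=s\circ e\circ t=f$, and symmetrically. So $x\mapsto s\circ x\circ t$ is a group isomorphism $G_e\to G_f$, with inverse $y\mapsto t\circ y\circ s$. Hence $\varphi$ maps an $R$-basis bijectively to unit multiples of an $R$-basis, so it is an $R$-module isomorphism.

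For multiplicativity, the cleanest route avoids expanding $\beta$ blindly. I work inside $R_\alpha\C$ and consider the element $u:=s\,x\,t$, where the products are taken in the twisted algebra. Using associativity, $u=\alpha(x,t)\alpha(s,x\circ t)\,(s\circ x\circ t)$. Now compute
$$(s x t)(s y t)=s\,x\,(ts)\,y\,t=\alpha(t,s)\,s\,x\,e\,y\,t .$$
Since $x=x\circ e$ and $y=e\circ y$, we have $x e=\alpha(x,e)x$. The cocycle identity applied to $(x,e,e)$ gives $\alpha(x,e)=\alpha(e,e)$, and similarly $e y=\alpha(e,e)y$. Hence $x e y=\alpha(e,e)\,xy$, and
$$(sxt)(syt)=\alpha(t,s)\alpha(e,e)\,s(xy)t .$$
It follows that $x\mapsto \alpha(e,e)^{-1}\alpha(t,s)^{-1}\,s x t$ is multiplicative on $R_\alpha G_e$. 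This map is exactly $\varphi$, since expanding $s x t$ gives the coefficient $\beta(x)$. The product $xy$ here is the twisted product in $R_\alpha G_e$, and the formula extends by linearity. This gives (i).

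\emph{Part (ii).} Apply (i) twice: $\varphi$ is given by $s,t$, and $\varphi'\colon R_\alpha G_f\to R_\alpha G_e$ is given by $t',s'$. By the same reasoning, $\varphi'(z)=\alpha(f,f)^{-1}\alpha(s',t')^{-1}\,t' z s'$. The composite is therefore
$$x\mapsto c\;(t's)\,x\,(ts'),\qquad c=\alpha(f,f)^{-1}\alpha(s',t')^{-1}\alpha(e,e)^{-1}\alpha(t,s)^{-1},$$
with all products twisted.

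It remains to see that this is conjugation by $t'\circ s$ in $R_\alpha G_e$. Note that $t'\circ s\in G_e$, with inverse $t\circ s'$, because $t'\circ s\circ t\circ s'=t'\circ f\circ s'=e$. The twisted products $(t's)$ and $(ts')$ are unit multiples of $t'\circ s$ and $t\circ s'$. In $R_\alpha G_e$ the twisted inverse of $g=t'\circ s$ is $\lambda\,g^{-1}$, where $\lambda$ is fixed by requiring that $g\cdot\lambda g^{-1}$ equal the unit $\hat e$. So it suffices to check that $c\,(t's)(ts')$ equals $\hat e$.

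Compute $(t's)(ts')=t'(st)s'=\alpha(s,t)\,t' f s'$. Then use $t'f=\alpha(f,f)t'$, by the cocycle identity as before, and $t's'=\alpha(t',s')e$. The scalar check therefore reduces to the identity
$$\alpha(s,t)\alpha(t',s')=\alpha(t,s)\alpha(s',t')\alpha(e,e)^{-1}\alpha(f,f)\,\alpha(e,e)\alpha(f,f)^{-1}\cdot(\text{unit}),$$
where the unit comes from $\hat e$. This identity follows from the cocycle identities for the triples $(s,t,s)$ and $(t',s',t')$, for instance $\alpha(s,t\circ s)\alpha(t,s)=\alpha(s\circ t,s)\alpha(s,t)$, that is, $\alpha(s,e)\alpha(t,s)=\alpha(f,s)\alpha(s,t)$, together with $\alpha(s,e)=\alpha(e,e)$ and $\alpha(f,s)=\alpha(f,f)$.

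\emph{Main obstacle.} I expect this final scalar bookkeeping in (ii) to be the main obstacle. A safer alternative is to avoid it entirely. The composite is multiplicative, and it agrees up to a unit scalar with conjugation by $t'\circ s$ on each basis element $x$. A multiplicative unit-rescaling of the basis that fixes $\hat e$ and agrees with conjugation up to scalars can be handled directly. One computes both sides on $x=e$: the composite of unital maps sends $\hat e$ to $\hat e$, as does conjugation, which pins down the scalar. Then one compares the general $x$ via the factorisation $(t's)x(ts')=\bigl((t's)x(ts)\bigr)\cdot\bigl((st)\ldots\bigr)$, reusing (i).

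Uniqueness up to inner automorphism follows: any two choices of $(s,t)$ give isomorphisms differing by such a composite.
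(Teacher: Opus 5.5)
Your proposal is correct. The paper gives no proof of its own here; it simply cites [LS, Propositions 5.2, 5.4]. Your route is the natural direct argument: you rewrite $\varphi(x)=\alpha(e,e)^{-1}\alpha(t,s)^{-1}\,s\,x\,t$ with twisted products in $R_\alpha\C$, so that multiplicativity in (i) needs only $ts=\alpha(t,s)e$ and $\alpha(x,e)=\alpha(e,e)$. It also rests on the same cocycle identity $\alpha(e,e)\alpha(t,s)=\alpha(f,f)\alpha(s,t)$ ([LS, Lemma 5.3]) that the paper invokes later.

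There are three points to tidy in (ii).

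First, the displayed scalar identity with the unspecified ``(unit)'' is garbled. After cancellation, what you need is exactly $\alpha(s,t)\alpha(t',s')=\alpha(t,s)\alpha(s',t')$. This follows by dividing $\alpha(e,e)\alpha(t,s)=\alpha(f,f)\alpha(s,t)$ by $\alpha(e,e)\alpha(t',s')=\alpha(f,f)\alpha(s',t')$, which are the consequences of the two triples you name.

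Second, your unitality argument makes this bookkeeping unnecessary, and no further ``factorisation'' step is needed. The composite is $x\mapsto a\,x\,b$ with the fixed elements $a=t's$ and $b=c\,(ts')$ of $R_\alpha G_e$. Being bijective and multiplicative, it sends $\hat e$ to $\hat e$, so $ab=\hat e$. Since $a$ is a unit multiple of an element of $G_e$, this gives $b=a^{-1}$, so the composite is conjugation by $a$, i.e.\ by $t'\circ s$.

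Third, for the uniqueness claim, apply (ii) with both pairs equal to $(s',t')$. This gives conjugation by $t'\circ s'=e$, which is the identity. So the map built from $(t',s')$ is the inverse of the one built from $(s',t')$. Only then does (ii) say that the isomorphisms from $(s,t)$ and $(s',t')$ differ by an inner automorphism.
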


\begin{definition}\label{defi:isomorphism of pairs}
 {\rm Let $\C$ be a finite category and let $\alpha\in Z^2(\C;R^\times)$. Let $e$ be an idempotent endomorphism in $\C$. For any $R_\alpha\C$-module $U$, the $R$-space $eU=\hat{e}U$ is an $eR_\alpha\C e$-module (or equivalently, an $\hat{e}R_\alpha\C\hat{e}$-module), hence restricts to an $R_\alpha G_e$-module. Two pairs $(e,U)$ and $(f,V)$, consisting of idempotents $e\in \End_\C(X)$, $f\in \End_\C(Y)$, an $R_\alpha G_e$-module $U$ and an $R_\alpha G_f$ module $V$, are called {\it isomorphic} if the idempotents $e$ and $f$ are isomorphic and if the isomorphism classes of $U$ and $V$ correspond to each other through the induced isomorphism $\varphi:R_\alpha G_e\cong R_\alpha G_f$ in Proposition \ref{prop:isomorphic idempotents} (i). That is, ${}_\varphi U\cong V$ as $R_\alpha G_f$-modules. Since inner automorphisms of an $R$-algebra stabilise all isomorphism classes of modules, by Proposition \ref{prop:isomorphic idempotents} (ii) this property is independent of the choice of the isomorphism $\varphi:R_\alpha G_e\cong R_\alpha G_f$. }
\end{definition}

\begin{theorem}[{\cite[Theorem 1.2]{LS}}]\label{theo:bijection between simple moudles and pairs}
Let $\C$ be a finite category and let $\alpha\in Z^2(\C;R^\times)$. The map sending a simple $R_\alpha\C$-module $S$ to the pair $(e,eS)$, where $e$ is an idempotent endomorphism in $\C$ minimal with respect to $eS\neq 0$, induces a bijection $\Pi$ between $\S(R_\alpha \C)$ and the set of isomorphism classes of pairs $(e,T)$ consisting of an idempotent endomorphism $e$ in $\C$ and a simple $R_\alpha G_e$-module $T$.	
\end{theorem}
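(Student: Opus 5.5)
We would follow the standard idempotent-truncation argument, in the form of Green's theory of idempotent functors, adapted to twisted category algebras. The plan has three steps.

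First, we show that $\Pi$ is well defined. Let $S$ be a simple $R_\alpha\C$-module. Since $S\neq 0$ and the unit element is $\sum_X\alpha(\Id_X,\Id_X)^{-1}\Id_X$ by Proposition \ref{prop: unit element}, some $\Id_X$ satisfies $\Id_X S\neq 0$. The set of idempotents $e$ with $eS\neq 0$ is therefore nonempty, so minimal elements exist in this finite poset.

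Fix a minimal such $e$. We aim to show that $eS$ is a simple module over $\hat e R_\alpha\C\hat e = eR_\alpha\C e$. This follows from the general fact that $\hat e S$ is either zero or simple over $\hat e A\hat e$ whenever $S$ is simple over $A$.

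Next we analyse $eR_\alpha\C e$. It has basis $e\circ\End_\C(X)\circ e$. We claim that the span $J$ of the non-invertible elements of this monoid is a two-sided ideal; this holds because the non-units of a finite monoid form an ideal. By finiteness, every non-unit $m$ of $e\circ\End_\C(X)\circ e$ has an idempotent power $f=m^n$ with $f<e$, or else $m$ is invertible. More precisely, we want that any non-unit $m$ factors through some idempotent $f<e$, or at least that $J$ acts nilpotently or as zero on $eS$.

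The key point is this. If $f<e$, then $fS=0$ by minimality, and hence $m^nS=0$ up to scalars. We then want $J\cdot eS=0$, and for this we expect to use the standard semigroup fact that the ideal generated by the non-units $m$ with $m^n=f<e$ acts as zero on $eS$. If $JeS\neq0$, then by simplicity $JeS=eS$, so iterating gives $J^N eS=eS$. But $J^N$ is spanned by products lying in the ideal of non-units, and powers of elements of this ideal eventually land in $\{$products through idempotents $<e\}$. This step, making precise that $J$ annihilates $eS$, is the main obstacle. I would first try the $\mathcal J$-class argument: in a finite monoid, the complement of the group of units in $eMe$ contains an idempotent-generated piece through which high products factor.

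Granting that $J$ annihilates $eS$, we have $eS$ simple over $eR_\alpha\C e/J\cong R_\alpha G_e$. Well-definedness up to isomorphism then follows from Proposition \ref{prop:isomorphic idempotents} and Definition \ref{defi:isomorphism of pairs}. If $e$ and $f$ are both minimal with nonzero truncation, then $e$ and $f$ should be isomorphic: from $eS\neq0$ and $fS\neq0$ we get $f R_\alpha\C e S\neq 0$, which yields morphisms $s,t$ with $t\circ s\in G_e$, and these can be corrected to satisfy $t\circ s=e$. Conjugation by $s$ then transports $eS$ to $fS$ via $\varphi$.

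Second, we prove injectivity. Simple modules $S$ with $eS\cong T$ are recovered as the head of $R_\alpha\C\hat e\otimes_{\hat eR_\alpha\C\hat e}T$, where $T$ is inflated from $R_\alpha G_e$. Any simple $S$ with $eS\cong T$ is a quotient of this module, by adjunction. Moreover, this module has a unique maximal submodule, namely the sum of the submodules $N$ with $eN=0$. This is Green's theorem.

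Third, we prove surjectivity. Given a pair $(e,T)$, we inflate $T$ to $eR_\alpha\C e$ via $J$ and let $S$ be the simple head described above; then $eS\cong T$. It remains to check that $e$ is minimal, that is, $fS=0$ for all $f<e$. Such an $f$ lies in $J$, and $f\cdot(R_\alpha\C\hat e\otimes T)$ maps into terms involving $J T=0$ modulo the radical, which gives the required vanishing.
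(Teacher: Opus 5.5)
The paper does not prove this statement itself; it quotes it from \cite[Theorem 1.2]{LS}. Your framework is the standard one for this result. It consists of Green's idempotent truncation $S\mapsto \hat eS$, inverted by taking the unique simple quotient of $R_\alpha\C\hat e\otimes_{\hat eR_\alpha\C\hat e}T$, together with the fact that the span $J$ of the non-units of $M=e\circ\End_\C(X)\circ e$ annihilates $eS$ when $e$ is minimal. Your injectivity and surjectivity steps are fine once that annihilation is available.

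\textbf{The gap.} The annihilation $J\cdot eS=0$ is the real content of the theorem, and you leave it open. The two heuristics you offer do not give it.
\begin{enumerate}
\item That each non-unit $m$ has an idempotent power $m^n<e$ only shows that each basis element of $J$ acts nilpotently on $eS$. Individually nilpotent operators can still act irreducibly, e.g.\ $E_{12},E_{21}$ on $k^2$.
\item It is false in general that every non-unit factors through some idempotent $f<e$. In the monoid $M=\{e,m,z\}$ with $m^2=z=z^2$ and $mz=zm=z$, one has $MzM=\{z\}\not\ni m$.
\end{enumerate}
Your independence-of-choice step also depends on this lemma, since you need it to conclude that $t\circ s\in G_e$.

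\textbf{How to close it.} The missing ingredient is a statement about products of arbitrary non-units, not powers of a single one. Let $T$ be a finite semigroup and $N=|T|+1$. Then every composite $t_1\circ\cdots\circ t_N$ of elements of $T$ can be written as $a\circ g\circ b$ with $g\in T$ idempotent and $a,b\in M$.
\begin{itemize}
\item Two of the partial products $p_i=t_1\circ\cdots\circ t_i$ ($1\le i\le N$) coincide, say $p_i=p_j$ with $i<j$.
\item Then $p_i=p_i\circ w$ with $w=t_{i+1}\circ\cdots\circ t_j\in T$, hence $p_i=p_i\circ g$ for the idempotent power $g$ of $w$.
\end{itemize}

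Apply this to the semigroup $T=M\setminus G_e$. The idempotent $g$ satisfies $g=e\circ g\circ e$ and $g\neq e$, so $g<e$, and $gS=0$ by minimality. A product of basis elements in $R_\alpha\C$ is a unit multiple of the composite, so every product of $N$ basis elements of $J$ kills $S$; that is, $J^N S=0$.

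Since $J$ is an ideal of $eR_\alpha\C e$, $J\cdot eS$ is a submodule of the simple $eR_\alpha\C e$-module $eS$. If it were all of $eS$, we would get $eS=J^N eS=0$. Hence $J\cdot eS=0$.

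\textbf{A smaller omission.} In your choice-independence argument, after replacing $t$ by $(t\circ s)^{-1}\circ t$ so that $t\circ s=e$, you must also show $s\circ t=f$. This holds because $s\circ t$ is an idempotent $\le f$. Moreover $e=t\circ(s\circ t)\circ s$ gives $eS\subseteq t\,(s\circ t)S$, so $(s\circ t)S\neq 0$. Minimality of $f$ then forces $s\circ t=f$.
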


\begin{proposition}\label{prop: Galois action on pairs}
Let $\C$ be a finite category and let $\alpha\in Z^2(\C;\F_p^\times)\subseteq Z^2(\C;k^\times)$. There is a well-defined action of the Galois group $\Gamma=\Gal(k/\F_p)$ on the set of isomorphism classes of pairs $(e,T)$ consisting of an idempotent endomorphism $e$ in $\C$ and a simple $k_\alpha G_e$-module $T$ via ${}^\sigma(e,T)=(e,{}^\sigma T)$, where $\sigma\in \Gamma$. Sometimes we will also write ${}^\sigma(e,T)$ as ${}_\sigma(e,T)$. Moreover, $(e,T)$ is a weight if and only if $(e, {}^\sigma T)$ is a weight.
\end{proposition}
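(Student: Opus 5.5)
To prove Proposition \ref{prop: Galois action on pairs}, I would first fix the meaning of ${}^\sigma T$ for a $k_\alpha G_e$-module $T$. Because $\alpha$ takes values in $\F_p^\times$, we have $k_\alpha G_e\cong k\otimes_{\F_p}(\F_p)_\alpha G_e$, so Notation \ref{nation:Galois twist} applies with $A=(\F_p)_\alpha G_e$. The twist ${}^\sigma T$ is therefore again a simple $k_\alpha G_e$-module, and it is projective exactly when $T$ is. This also shows that $T\cong T'$ implies ${}^\sigma T\cong{}^\sigma T'$, since any isomorphism $T\to T'$ is also an isomorphism ${}^\sigma T\to{}^\sigma T'$. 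From the definition, ${}^{\sigma\tau}T={}^\sigma({}^\tau T)$ and ${}^{1}T=T$, because both sides have the same underlying space and the structure maps agree. So, once we know the assignment is well defined on isomorphism classes of pairs, it is a group action.

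The main step is independence of the representative of the isomorphism class of the pair. Suppose $(e,U)\cong(f,V)$ in the sense of Definition \ref{defi:isomorphism of pairs}, via $s,t$ as in Proposition \ref{prop:isomorphic idempotents}(i). Then ${}_\varphi U\cong V$, where $\varphi(x)=\beta(x)\,(s\circ x\circ t)$. The key observation is that $\beta(x)$ is a product of values of $\alpha$ and their inverses, so $\beta(x)\in\F_p^\times$. Hence $\varphi$ maps the $\F_p$-form $(\F_p)_\alpha G_e$ into $(\F_p)_\alpha G_f$, and so $\varphi$ commutes with $\sigma$, where $\sigma$ is the automorphism $x\otimes a\mapsto\sigma(x)\otimes a$. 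Explicitly, $\varphi(\sigma(\lambda x))=\sigma(\lambda)\beta(x)\,s\circ x\circ t=\sigma(\varphi(\lambda x))$ for $\lambda\in k$.

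Using this, I would check the module identity ${}^\sigma({}_\varphi U)={}_\varphi({}^\sigma U)$. Both modules have underlying space $U$. On the left, $b\in k_\alpha G_f$ acts through $\varphi^{-1}\sigma^{-1}(b)$; on the right it acts through $\sigma^{-1}\varphi^{-1}(b)$. These agree because $\varphi\circ\sigma=\sigma\circ\varphi$. Combining, ${}_\varphi({}^\sigma U)={}^\sigma({}_\varphi U)\cong{}^\sigma V$, which gives $(e,{}^\sigma U)\cong(f,{}^\sigma V)$. I expect this commutation of $\varphi$ with $\sigma$ to be the only non-routine point, and it rests entirely on $\alpha$ being $\F_p^\times$-valued.

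The final claim is then immediate. By definition, $(e,T)$ is a weight exactly when $T$ is simple and projective, and by the first paragraph ${}^\sigma T$ is simple and projective if and only if $T$ is.
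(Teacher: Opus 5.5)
Your proposal is correct and follows essentially the same route as the paper. Both arguments rest on $\varphi$ being defined over $\F_p$ (since $\beta(x)\in\F_p^\times$), so $\varphi$ commutes with $\sigma$; this gives ${}_\varphi({}^\sigma U)={}^\sigma({}_\varphi U)\cong{}^\sigma V$, and the weight statement follows because twisting by $\sigma$ preserves simplicity and projectivity. You also check the action axioms, which the paper leaves implicit, and you assign the composites $\varphi^{-1}\sigma^{-1}$ and $\sigma^{-1}\varphi^{-1}$ to the correct modules (the paper's display interchanges them, which is harmless since they coincide).
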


\begin{proof}
Let $\sigma\in \Gamma$. For the first statement we need to show that if $(e,T)$ is isomorphic to $(e',T')$, then $(e,{}^\sigma T)$ is isomorphic to $(e',{}^\sigma T')$. By Proposition \ref{prop:isomorphic idempotents} (i), there is a $k$-algebra isomorphism $\varphi:k_\alpha G_e\cong k_\alpha G_{e'}$. We need to show that ${}_\varphi({}^\sigma T)\cong {}^\sigma T'$ as $k_\alpha G_{e'}$-modules.  Since $\alpha$ is in $Z^2(\C;\F_p^\times)$, by the explicit construction of $\varphi$ in Proposition \ref{prop:isomorphic idempotents} (i), $\varphi$ is defined over $\F_p$; that is, there exists an $\F_p$-algebra isomorphism $\varphi_0:(\F_p)_\alpha G_e\cong (\F_p)_\alpha G_{e'}$ such that $\varphi={\rm Id}_k\otimes \varphi_0$. Since $(e,T)$ is isomorphic to $(e',T')$, there is an isomorphism $\psi: {}_\varphi T\cong T'$ of $k_\alpha G_{e'}$-modules. The map $\psi$ is also a $k_\alpha G_{e'}$-module isomorphism ${}^\sigma ({}_\varphi T) \cong {}^\sigma T'$. Now it suffices to show that ${}_\varphi({}^\sigma T)\cong{}^\sigma({}_\varphi T)$ as $k_\alpha G_{e'}$-modules. Indeed, the structure homomorphisms of the $k_\alpha G_{e'}$-modules ${}_\varphi({}^\sigma T)$ and ${}^\sigma({}_\varphi T)$ are, respectively, 
$$k_\alpha G_{e'} \xrightarrow{\sigma^{-1}} k_\alpha G_{e'} \xrightarrow{\varphi^{-1}} k_\alpha G_{e}\to \End_k(T),$$
and 
$$k_\alpha G_{e'} \xrightarrow{\varphi^{-1}} k_\alpha G_{e} \xrightarrow{\sigma^{-1}} k_\alpha G_{e}\to \End_k(T).$$
By definition (see Notation \ref{nation:Galois twist}), $\sigma^{-1}:k_\alpha G_e\to k_\alpha G_e$ and $\sigma^{-1}:k_\alpha G_{e'}\to k_\alpha G_{e'}$ can be written, respectively, as $\sigma^{-1}\otimes {\rm Id}_{(\F_p)_\alpha G_e}$ and $\sigma^{-1}\otimes {\rm Id}_{(\F_p)_\alpha G_{e'}}$. Since $\varphi^{-1}={\rm Id}_k\otimes \varphi_0^{-1}$, by the commutativity of the tensor product we have 
$$\varphi^{-1}\circ\sigma^{-1}=(\Id_k\otimes \varphi_0^{-1})\circ(\sigma^{-1}\otimes {\rm Id}_{(\F_p)_\alpha G_e})=\sigma^{-1}\otimes\varphi_0^{-1}=(\sigma^{-1}\otimes {\rm Id}_{(\F_p)_\alpha G_{e'}})\circ(\Id_k\otimes \varphi_0^{-1})=\sigma^{-1}\circ \varphi^{-1},$$
which proves the claim. As explained in Notation \ref{nation:Galois twist}, $T$ is projective if and only if ${}^\sigma T$ is projective, whence the second statement.
\end{proof}

\begin{proposition}\label{prop:pi commutes with gamma}
Keep the notation of Theorem \ref{theo:bijection between simple moudles and pairs}. Assume that $R=k$ and $\alpha\in Z^2(\C;\F_p^\times)$. Then the bijection $\Pi$ commutes with the action of the Galois group $\Gamma$.
\end{proposition}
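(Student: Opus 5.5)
We need to show that for $\sigma\in\Gamma$ and a simple $k_\alpha\C$-module $S$ we have $\Pi({}^\sigma S)={}^\sigma\Pi(S)$, i.e.\ that $\Pi({}^\sigma S)$ is isomorphic to $(e,{}^\sigma(eS))$ whenever $\Pi(S)=(e,eS)$. The whole argument rests on the fact that, since $\alpha$ takes values in $\F_p^\times$, every morphism of $\C$ (in particular every idempotent endomorphism $f$) is an element of $(\F_p)_\alpha\C\subseteq k_\alpha\C=k\otimes_{\F_p}(\F_p)_\alpha\C$. Hence $\sigma$, acting as $\sigma\otimes\Id$, fixes $f$, and also fixes $\hat f=\alpha(f,f)^{-1}f$, because $\alpha(f,f)\in\F_p^\times$.

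First I compare the subspaces $f({}^\sigma S)$ and $fS$. In ${}^\sigma S$ the element $f$ acts as $\sigma^{-1}(f)=f$ acts on $S$. Therefore $f\cdot({}^\sigma S)=fS$ as $k$-subspaces of the common underlying space. In particular $f({}^\sigma S)\neq 0$ if and only if $fS\neq 0$, so the set of idempotents $f$ with $f({}^\sigma S)\neq0$ equals the set with $fS\neq 0$. The minimal elements of the two sets therefore coincide, and the same $e$ works for both $S$ and ${}^\sigma S$ in Theorem~\ref{theo:bijection between simple moudles and pairs}. Since $\Pi$ is well defined on isomorphism classes and independent of the choice of minimal $e$, this step suffices.

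Second, I identify the $k_\alpha G_e$-module $e({}^\sigma S)$ with ${}^\sigma(eS)$. Both have underlying space $eS$. The structure map of $e({}^\sigma S)$ is the restriction along $k_\alpha G_e\hookrightarrow ek_\alpha\C e\subseteq k_\alpha\C$ of $k_\alpha\C\xrightarrow{\sigma^{-1}}k_\alpha\C\to\End_k(S)$. The structure map of ${}^\sigma(eS)$ is $k_\alpha G_e\xrightarrow{\sigma^{-1}}k_\alpha G_e\to\End_k(eS)$. These agree because the inclusion $k_\alpha G_e\to k_\alpha\C$ is of the form $\Id_k\otimes(\text{inclusion over }\F_p)$ and so commutes with $\sigma^{-1}$. (Strictly, $k_\alpha G_e$ sits inside $k_\alpha\C$ with identity $\hat e$ rather than $e$; the identification of $G_e$-elements with basis elements of $k_\alpha\C$ is still defined over $\F_p$, so the commutation holds.)

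Hence $\Pi({}^\sigma S)=(e,{}^\sigma(eS))={}^\sigma\Pi(S)$, using the action defined in Proposition~\ref{prop: Galois action on pairs}. The only potentially delicate point is the compatibility of $\sigma$ with the embedding of $k_\alpha G_e$ into $\hat e k_\alpha\C\hat e$. This is routine once we note that all structure constants lie in $\F_p$.
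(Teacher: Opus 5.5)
Your proof is correct and takes essentially the same route as the paper: $\sigma$ fixes every morphism of $\C$, and the embedding $k_\alpha G_e\subseteq k_\alpha\C$ is defined over $\F_p$, so $e({}^\sigma S)$ and ${}^\sigma(eS)$ coincide as $k_\alpha G_e$-modules, the same minimal idempotent serves for $S$ and ${}^\sigma S$, and the definition of the $\Gamma$-action on pairs finishes the argument. Your explicit observation that $f({}^\sigma S)=fS$ for \emph{every} idempotent $f$ is a slightly cleaner justification of the minimality step than the paper's terse ``by the minimality of $e$''.
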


\begin{proof}
Let $\sigma\in \Gamma$ and $S$ a simple $k_\alpha\C$-module. Denote by $[S]$ the isomorphism class of $S$. Let $e$ be an idempotent endomorphism in $\C$, minimal with respect to $eS\neq 0$. Then ${}^\sigma S$ is a simple $k_\alpha\C$-module, $e({}^\sigma S)$ is a $k_\alpha G_e$-module, and $e({}^\sigma S)\cong {}^\sigma(eS)$ as $k_\alpha G_e$-modules. Hence by the minimality of $e$, we see that $e$ is also minimal with respect to $e({}^\sigma S)\neq 0$. Therefore, we have 
$$\Pi([{}^\sigma S])=[(e,e({}^\sigma S))]=[(e,{}^\sigma(eS))]={}^\sigma[(e,eS)]={}^\sigma(\Pi([S])),$$
where the notation $[(e,S)]$ denotes the isomorphism class of $(e,S)$, and where the third equality holds by Proposition \ref{prop: Galois action on pairs}. 
\end{proof}

\begin{notation}\label{notation:lambda_ef}
	{\rm Let $\C$ be a finite category and $\alpha\in Z^2(\C;k^\times)$. For any two isomorphic idempotents $e$ and $f$ in $\C$, we denote by $\Lambda_{e,f}:\S(k_\alpha G_e)\to \S(k_\alpha G_f)$ the bijection sending $[T]\in \S(k_\alpha G_e)$ to $[{}_\varphi T]$, where $\varphi$ is the isomorphism $k_\alpha G_e\cong k_\alpha G_f$ as described in Proposition \ref{prop:isomorphic idempotents} (i). Since inner automorphisms of a $k$-algebra stabilise all isomorphism classes of modules, $\Lambda_{e,f}$ is independent of the choice of $\varphi$. It is clear that the inverse of $\Lambda_{e,f}$ is $\Lambda_{f,e}$. Moreover, one easily checks that for any three isomorphic idempotents $d$, $e$ and $f$ in $\C$, we have $\Lambda_{e,f}\circ \Lambda_{d,e}=\Lambda_{d,f}$. If $\alpha\in Z^2(\C;\F_p^\times)$, then $\Lambda_{e,f}$ commutes with the action of $\Gamma$. Indeed, by the explicit construction of the isomorphism $\varphi$ in Proposition \ref{prop:isomorphic idempotents} (i), $\varphi$ is defined over $\F_p$. For any $\sigma\in \Gamma$, by the proof of Proposition \ref{prop: Galois action on pairs}, we have ${}^\sigma({}_{\varphi} T)\cong {}_{\varphi}({}^\sigma T)$ as $k_\alpha G_f$-modules. }
\end{notation}

\begin{proposition}\label{propositon:isomorphic idempotents in T}
Let $\C$ be a finite category. Let $(X,P)$ and $(Y,Q)$ be objects in $\T_\C$. Let $(e,P,P)$ and $(f,Q,Q)$ be idempotent endomorphisms of $(X,P)$ and $(Y,Q)$, respectively. The following are equivalent:
\begin{enumerate}[{\rm (i)}]
	\item The morphisms $(e,P,P)$ and $(f,Q,Q)$ are isomorphic in $\T_\C$.
	\item There exist $s\in f\circ \Hom_\C(X,Y)\circ e$ and $t\in e\circ \Hom_\C(Y,X)\circ f$ satisfying $t\circ s=e$, $s\circ t=f$ and $s\circ (e\circ P)\circ t=f\circ Q$.
\end{enumerate}
\end{proposition}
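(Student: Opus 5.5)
The plan is to unwind what it means for two idempotent endomorphisms of $\T_\C$ to be isomorphic, using the description of morphisms in $\T_\C$ recalled in the introduction, and translate each condition back into $\C$.

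\emph{(i) $\Rightarrow$ (ii).} Suppose $(e,P,P)$ and $(f,Q,Q)$ are isomorphic in $\T_\C$. By the remark in Section \ref{s2:Twisted category algebras and their idempotent endomorphisms}, applied to the category $\T_\C$, we may choose morphisms $(s,P,Q)$ and $(t,Q,P)$ with $(t,Q,P)\circ(s,P,Q)=(e,P,P)$, $(s,P,Q)\circ(t,Q,P)=(f,Q,Q)$, $s=f\circ s\circ e$ and $t=e\circ t\circ f$. Composition in $\T_\C$ is induced by composition in $\C$, so $t\circ s=e$ and $s\circ t=f$ hold in $\C$, with $s\in f\circ\Hom_\C(X,Y)\circ e$ and $t\in e\circ\Hom_\C(Y,X)\circ f$. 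It remains to prove $s\circ(e\circ P)\circ t=f\circ Q$. Since $(s,P,Q)$ is a morphism in $\T_\C$, we have $s\circ P\subseteq Q\circ s$. Hence $s\circ e\circ P\circ t=s\circ P\circ t\subseteq Q\circ s\circ t=Q\circ f$. Note that $e$ commutes with $P$: indeed $(e,P,P)$ is a morphism, so $e=e\circ 1_P=1_P\circ e$ and $e\circ P\subseteq P\circ e$, and similarly for $t$. Applying the same argument to $(t,Q,P)$ gives $t\circ Q\subseteq P\circ t$, so $t\circ f\circ Q\circ s\subseteq e\circ P\circ e$. Conjugating by $s,t$ then gives $f\circ Q\circ f\subseteq s\circ e\circ P\circ e\circ t$.

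The issue is that $Q\circ f$ versus $f\circ Q$ and $e\circ P\circ e$ versus $e\circ P$ have to be matched up. Since $f$ is an endomorphism in $\T_\C$, we have $f\circ Q\subseteq Q\circ f$. Both $f\circ Q$ and $Q\circ f$ are images of the group $Q$, and $f\circ Q\circ f$ is contained in and surjected onto by these. I would show they coincide as sets, using finiteness and cardinality counting via the maps $x\mapsto f\circ x$ and $x\mapsto x\circ f$, which are compatible because $f\circ x\circ f=f\circ x$. The same applies to $e$ and $P$. The two inclusions then combine to the equality $s\circ(e\circ P)\circ t=f\circ Q$.

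\emph{(ii) $\Rightarrow$ (i).} Given $s,t$ as in (ii), I first check that $(s,P,Q)$ is a morphism of $\T_\C$. The condition $s\circ 1_P=s$ follows from $s=s\circ e$ together with $e\circ 1_P=e$, and $1_Q\circ s=s$ holds similarly. For $s\circ P\subseteq Q\circ s$, write
\[
s\circ x=s\circ e\circ x=s\circ e\circ x\circ t\circ s\in f\circ Q\circ s\subseteq Q\circ f\circ s=Q\circ s,
\]
which uses $e\circ x=e\circ x\circ e$; this again comes from $e$ being a $\T_\C$-endomorphism. For $t$, I use the hypothesis in reverse: $t\circ f\circ Q\circ s=t\circ s\circ e\circ P\circ t\circ s=e\circ P\circ e$, so $t\circ y=t\circ f\circ y\circ s\circ t\in e\circ P\circ t\subseteq P\circ t$. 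Then $(t,Q,P)\circ(s,P,Q)=(e,P,P)$ and $(s,P,Q)\circ(t,Q,P)=(f,Q,Q)$, which is (i).

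The main obstacle is the bookkeeping of the relations between $e$ and $P$, specifically establishing $e\circ x=e\circ x\circ e$ and $e\circ P=P\circ e$ for an idempotent $\T_\C$-endomorphism $(e,P,P)$. I would first prove a small lemma: if $(e,P,P)$ is an idempotent in $\T_\C$, then $x\mapsto e\circ x$ defines a group homomorphism from $P$ onto a group $e\circ P$ with identity $e$, and this group equals $P\circ e$. The lemma should follow from $e\circ P\subseteq P\circ e$ together with the count $|e\circ P|=|P\circ e|$. Once this lemma is in place, both directions above are short.
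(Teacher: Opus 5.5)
\textbf{There is a genuine gap in (i) $\Rightarrow$ (ii), though it is easy to repair.} The lemma you plan to prove is false in its second half. For an idempotent endomorphism $(e,P,P)$ in $\T_\C$ one does have $e\circ x\circ e=e\circ x$ for $x\in P$: write $e\circ x=x'\circ e$ with $x'\in P$. Hence $e\circ P=e\circ P\circ e$ is a group with identity $e$.

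But in general $e\circ P\neq P\circ e$, and the count $|e\circ P|=|P\circ e|$ fails. For a counterexample, let $\C$ be the full transformation monoid on $\{1,\dots,p\}$, viewed as a one-object category. Let $P$ be generated by a $p$-cycle, so $1_P$ is the identity, and let $e$ be the constant map with value $1$. Then $e\circ P=\{e\}\subseteq P\circ e$, so $(e,P,P)$ is an idempotent endomorphism of $(X,P)$ in $\T_\C$. Yet $P\circ e$ consists of all $p$ constant maps, and it is not even a group. In general you only get $|e\circ P|\le|P\circ e|$. The reason is that the stabiliser of $e$ under $x\mapsto x\circ e$ lies in the kernel of $x\mapsto e\circ x$, but it can be strictly smaller.

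So the last step of your (i) $\Rightarrow$ (ii) argument fails as written. From $s\circ P\subseteq Q\circ s$ you only get $s\circ(e\circ P)\circ t\subseteq Q\circ f$, and $Q\circ f$ can be strictly larger than $f\circ Q$. The fix is to use $s=f\circ s$ before concluding:
\[
s\circ(e\circ P)\circ t=f\circ s\circ P\circ t\subseteq f\circ Q\circ s\circ t=f\circ Q\circ f=f\circ Q .
\]
This is essentially what the paper does: it composes the inclusion with $f$ and precomposes with $e$, giving $s\circ(e\circ P)\subseteq(f\circ Q)\circ s$. Your reverse inclusion $f\circ Q=f\circ Q\circ f\subseteq s\circ(e\circ P)\circ t$ uses only the true half of your lemma and is fine.

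\textbf{Your (ii) $\Rightarrow$ (i) argument is correct.} It uses only $e\circ x\circ e=e\circ x$, $e\circ P\subseteq P\circ e$ and $f\circ Q\subseteq Q\circ f$, never the false equality. It is also more direct than the paper's route. The paper shows that $(e,e\circ P,e\circ P)$ and $(f,f\circ Q,f\circ Q)$ are isomorphic via the objects $(X,e\circ P)$ and $(Y,f\circ Q)$. It then invokes \cite[Lemma 3.1 (iv)]{Lin14} to return to $(e,P,P)$ and $(f,Q,Q)$. You instead check directly that $(s,P,Q)$ and $(t,Q,P)$ are morphisms of $\T_\C$, which avoids the external lemma.

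So drop the claim $e\circ P=P\circ e$ from your lemma, keeping only $e\circ x\circ e=e\circ x$. Then replace the final step of (i) $\Rightarrow$ (ii) by the display above.
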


\begin{proof}
Since $(e,P,P)$ and $(f,Q,Q)$ are respectively endomorphisms of $(X,P)$ and $(Y,Q)$, we have 
$$e=e\circ 1_P=1_P\circ e,~~~e\circ P\subseteq P\circ e,~~~f=f\circ 1_Q=1_Q\circ f~~~{\rm and}~~~f\circ Q\subseteq Q\circ f.$$ 
 From the inclusion $e\circ P\subseteq P\circ e$ we see that $e\circ P=e\circ P\circ e$ is a $p$-subgroup of $G_e$. Similarly, $f\circ Q=f\circ Q\circ f$ is a $p$-subgroup of $G_f$. 

Suppose (i) holds. By definition of being isomorphic, there exist morphisms $s'\in \Hom_\C(X,Y)$ and $t'\in \Hom_\C(Y,X)$ such that 
$$s'=s'\circ 1_P=1_Q\circ s',~~~t'=t'\circ 1_Q=1_P\circ t',~~~t'\circ s'=e,~~~s'\circ t'=f$$ 
$$s'\circ P\subseteq Q\circ s'~~~{\rm and}~~~t'\circ Q\subseteq P\circ t'.$$
Set $s=f\circ s'\circ e$ and $t=e\circ t'\circ f$. Then we have $t\circ s=e$ and $s\circ t=f$.
Consider the inclusion $s'\circ P\subseteq Q\circ s'$. Composing with $f$ and precomposing with $e$ yields $f\circ s'\circ P\circ e\subseteq f\circ Q\circ s'\circ e$. Using $e\circ P\circ e=e\circ P\subseteq P\circ e$ and $f\circ Q=f\circ Q\circ f$, we obtain 
$(f\circ s'\circ e) \circ (e\circ P)\subseteq (f\circ Q)\circ (f\circ s'\circ e)$, or equivalently, 
$$s\circ (e\circ P)\subseteq (f\circ Q)\circ s.$$ 
A similarly argument applied to the inclusion $t'\circ Q\subseteq P\circ t'$ shows that 
$$t\circ (f\circ Q)\subseteq (e\circ P)\circ t.$$
Using that $t\circ s=e$ and $s\circ t=f$, we easily see that the two inclusions above should be equalities. Thus (i) implies (ii).

Now suppose (ii) holds. Consider the objects $(X,e\circ P)$ and $(Y,f\circ Q)$ in $\T_\C$. The equalities $e\circ P=e\circ P\circ e$ and $f\circ Q=f\circ Q\circ f$ implies that $(e,e\circ P,e\circ P)$ is an idempotent endomorphism of $(X,e\circ P)$ and $(f,f\circ Q,f\circ Q)$ is an idempotent endomorphism of $(Y,f\circ Q)$. By assumption, we see that $s$ and $t$ define morphisms $(s,e\circ P,f\circ Q):(X,e\circ P)\to (Y,f\circ Q)$ and $(t,f\circ Q,e\circ P):(Y,f\circ Q)\to (X,e\circ P)$ in $\T_\C$.
Moreover, we have $(t,f\circ Q,e\circ P)\circ (s,e\circ P,f\circ Q)=(e,e\circ P,e\circ P)$ and $(s,e\circ P,f\circ Q)\circ (t,f\circ Q,e\circ P)=(f,f\circ Q,f\circ Q)$. Hence $(e,e\circ P,e\circ P)$ and $(f,f\circ Q,f\circ Q)$ are isomorphic idempotents in $\T_\C$. Then by \cite[Lemma 3.1 (iv)]{Lin14}, $(e,P,P)$ and $(f,Q,Q)$ are isomorphic in $\T_\C$. This shows that (i) is a consequence of (ii).
\end{proof}

\begin{proposition}\label{propositon:NGe(eP)/eP}
	Let $\C$ be a finite category and $\alpha\in Z^2(\O_\C;\F_p^\times)$. Let $(X,P)$ and $(Y,Q)$ be objects in $\T_\C$. Assume that $(e,P,P)$ and $(f,Q,Q)$ are isomorphic idempotent endomorphisms in $\T_\C$, or equivalently, there exist $s\in f\circ \Hom_\C(X,Y)\circ e$ and $t\in e\circ \Hom_\C(Y,X)\circ f$ satisfying $t\circ s=e$, $s\circ t=f$ and $s\circ (e\circ P)\circ t=f\circ Q$. The $k$-algebra isomorphism $\varphi:k_\alpha G_e\cong k_\alpha G_f$ described in Proposition \ref{prop:isomorphic idempotents} restricts to a $k$-algebra isomorphism $k_\alpha N_{G_e}(e\circ P)\cong k_\alpha N_{G_f}(f\circ Q)$, which in turn induces a $k$-algebra isomorphism $\bar{\varphi}:k_\alpha N_{G_e}(e\circ P)/(e\circ P)\cong k_\alpha N_{G_f}(f\circ Q)/(f\circ Q)$.
\end{proposition}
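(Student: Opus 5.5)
The plan is to work directly with the explicit formula for $\varphi$ from Proposition \ref{prop:isomorphic idempotents} (i), namely $\varphi(x)=\beta(x)(s\circ x\circ t)$ for $x\in G_e$. The argument has three steps: check that $\varphi$ matches the normalisers, check that it carries the augmentation ideal of $e\circ P$ onto that of $f\circ Q$, and then pass to quotients.

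\textbf{Step 1: the conjugation map matches normalisers.} The underlying map $c:G_e\to G_f$, $x\mapsto s\circ x\circ t$, is a group isomorphism with inverse $y\mapsto t\circ y\circ s$. Indeed, $(s\circ x\circ t)\circ(s\circ x'\circ t)=s\circ x\circ e\circ x'\circ t=s\circ x\circ x'\circ t$, and $c$ sends $e$ to $f$. By hypothesis $c(e\circ P)=f\circ Q$, and the proof of Proposition \ref{propositon:isomorphic idempotents in T} shows that $e\circ P$ and $f\circ Q$ are $p$-subgroups of $G_e$ and $G_f$. Hence $c$ maps $N_{G_e}(e\circ P)$ bijectively onto $N_{G_f}(f\circ Q)$. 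Since $\varphi$ sends each basis element $x$ to a nonzero scalar multiple of $c(x)$, it restricts to an isomorphism of $k$-subspaces $k_\alpha N_{G_e}(e\circ P)\cong k_\alpha N_{G_f}(f\circ Q)$. These subspaces are subalgebras, and $\varphi$ is multiplicative, so this restriction is a $k$-algebra isomorphism.

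\textbf{Step 2: meaning of the quotient.} Here $k_\alpha N/(e\circ P)$ is to be read as the twisted group algebra of $N/(e\circ P)$. This requires $\alpha$, restricted to the normaliser, to be inflated (up to coboundary) from the quotient. I expect this is exactly where the hypothesis $\alpha\in Z^2(\O_\C;\F_p^\times)$ enters: $N_{G_e}(e\circ P)/(e\circ P)$ is identified with the automorphism group of the object $(X,e\circ P)$ in $\O_\C$, up to the idempotent $e$, via $x\mapsto$ its $e\circ P$-orbit. The cocycle $\alpha$ on $\O_\C$ then gives the twisted algebra $k_\alpha(N_{G_e}(e\circ P)/(e\circ P))$. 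The quotient map $k_\alpha N_{G_e}(e\circ P)\to k_\alpha N_{G_e}(e\circ P)/(e\circ P)$ is induced by the functor $\T_\C\to\O_\C$, whose kernel is the ideal $I_e$ spanned by differences of basis elements lying in the same $e\circ P$-coset. This kernel is generated by $\hat u-\hat e$ for $u\in e\circ P$, suitably normalised, with appropriate scalars.

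\textbf{Step 3: $\varphi$ carries $I_e$ onto $I_f$.} It then suffices to show $\varphi(I_e)=I_f$. The map $c$ sends $(e\circ P)$-cosets to $(f\circ Q)$-cosets, so the only issue is the scalars $\beta(x)$. One must verify that the factor $\beta$ is compatible with the scalars defining the projection to the orbit category, namely that $\beta(ux)$ and $\beta(x)$ correspond under the cocycle on $\O_\C$. The way to do this is to express the cocycle values appearing in $\beta$ through the images of $s$, $t$ and $x$ in $\O_\C$, where $s$ and $t$ become mutually inverse isomorphisms between $(X,e\circ P)$ and $(Y,f\circ Q)$. Then $\varphi$ descends to the conjugation isomorphism in the twisted algebra of $\O_\C$, which is again given by the formula of Proposition \ref{prop:isomorphic idempotents} (i), applied now to $\O_\C$ with the cocycle $\alpha$. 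This yields $\bar\varphi$ as an isomorphism, with inverse induced symmetrically by $\varphi^{-1}$.

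\textbf{Main obstacle.} I expect the hard part to be this scalar bookkeeping in Step 3. The trick is to avoid computing by hand and instead observe that $\bar\varphi$ is literally the isomorphism of Proposition \ref{prop:isomorphic idempotents} (i) for the isomorphic idempotents $(e,e\circ P)$ and $(f,f\circ Q)$ of $\O_\C$. Because $\alpha$ is defined on $\O_\C$, its values are constant along orbits, and this gives compatibility with the projection automatically.
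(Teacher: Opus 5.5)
Your Step 1 matches the first half of the paper's proof. The gap is in Step 3, which is where all the content of the descent lies, and which you do not actually carry out.

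The shortcut you propose conflates two different sets of cocycle values. The scalars
\[
\beta(x)=\alpha(x,t)\,\alpha(s,x\circ t)\,\alpha(e,e)^{-1}\alpha(t,s)^{-1}
\]
are values of $\alpha$ on the images of $x,s,t,e$ under the canonical functor $\C\to\O_\C$. These are morphisms between $(X,\{\Id_X\})$ and $(Y,\{\Id_Y\})$, where every orbit is a singleton. The images of $s,t,x$ as morphisms between $(X,e\circ P)$ and $(Y,f\circ Q)$ are different morphisms of $\O_\C$. A $2$-cocycle is just a function on composable pairs subject to the cocycle identity, so ``$\alpha$ is constant along orbits'' says nothing about $\beta$. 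Consequently you have established neither $\beta(x\circ u)=\beta(x)$ for $u\in e\circ P$, nor that $\bar\varphi$ is literally the orbit-category instance of Proposition~\ref{prop:isomorphic idempotents}~(i). Likewise, the ``appropriate scalars'' in your description of $I_e$ are never pinned down.

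What is missing is the one place where the prime $p$ enters. The paper uses that, because $\alpha$ comes from $\O_\C$, $\alpha(x,y)=1$ whenever one of $x,y$ lies in $e\circ P$ (resp.\ $f\circ Q$). With this normalisation the quotient by $e\circ P$ is the obvious one: its kernel is spanned by the elements $x\circ u-x$ with $u\in e\circ P$. The paper then descends $\varphi$ directly.

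Making that descent work requires $\beta$ to be constant on cosets of $e\circ P$. Multiplicativity of $\varphi$, together with the normalisation on both sides, gives $\beta(x\circ u)=\beta(x)\beta(u)$ for $x\in N_{G_e}(e\circ P)$ and $u\in e\circ P$. In particular $\beta|_{e\circ P}$ is a homomorphism from a $p$-group to $k^\times$. It is therefore trivial, since $k^\times$ has no elements of order $p$. Hence
\[
\varphi(x\circ u-x)=\beta(x)\bigl(c(x)\circ c(u)-c(x)\bigr),
\]
with $c(u)\in f\circ Q$, lies in the kernel on the $f$-side. The same argument for $\varphi^{-1}$, or a dimension count, shows that the induced map is an isomorphism.

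If you prefer not to rely on any normalisation of $\alpha$, use the same fact in another form. The algebra $k_\alpha(e\circ P)$ is local, being a twisted group algebra of a $p$-group in characteristic $p$. So the restrictions to it of your quotient map $q_e$ and of $q_f\circ\varphi$ are unital algebra maps to $k$, and both have kernel $J(k_\alpha(e\circ P))$. By a dimension count, $\ker q_e=k_\alpha N_{G_e}(e\circ P)\cdot J(k_\alpha(e\circ P))$. Therefore $q_f\circ\varphi$ factors through $q_e$, which gives $\bar\varphi$.
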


\begin{proof}
	The map sending $x\in G_e$ to $s\circ x\circ t$ is a group isomorphism $G_e\cong G_f$ and it restricts to group isomorphisms $e\circ P\cong f\circ Q$ and $N_{G_e}(e\circ P)\cong N_{G_f}(f\circ Q)$. Hence it induces a group isomorphism $N_{G_e}(e\circ P)/(e\circ P)\cong N_{G_f}(f\circ Q)/(f\circ Q)$. By the explicit construction of the isomorphism $\varphi$, we see that $\varphi$ restricts to a $k$-linear isomorphism $k_\alpha N_{G_e}(e\circ P)\cong k_\alpha N_{G_f}(f\circ Q)$. This is also a $k$-algebra isomorphism because $\varphi$ respects multiplication. Since $\alpha$ is taken from $Z^2(\O_\C;\F_p^\times)$, for any $x,y\in N_{G_e}(e\circ P)$, $\alpha(x,y)=1$ if one of $x$ or $y$ is in $e\circ P$. Similarly, for any $x',y'\in N_{G_f}(f\circ Q)$, $\alpha(x',y')=1$ if one of $x'$ or $y'$ is in $f\circ Q$. Hence the isomorphism $k_\alpha N_{G_e}(e\circ P)\cong k_\alpha N_{G_f}(f\circ Q)$ induces a $k$-algebra isomorphism $\bar{\varphi}:k_\alpha N_{G_e}(e\circ P)/(e\circ P)\cong k_\alpha N_{G_f}(f\circ Q)/(f\circ Q)$.
\end{proof}

\begin{notation}\label{notation: extension of category}
	{\rm Let $\C$ be a finite category, $R$ a finite subfield of $k$ and $\alpha\in Z^2(\C;R^\times)\subseteq Z^2(\C,k^\times)$. The {\it extension of $\C$ by $R^\times$ associated with $\alpha$} is the category $\hat{\C}$ with object set $\Ob(\C)$ and morphism set $\Mor(\hat{\C})=\Mor(\C)\times R^\times$, such that the composition in $\hat{\C}$ is defined by $(t,b)\circ(s,a)=(t\circ s,ba\alpha(t,s))$ for any two morphisms $s,~t\in \C$ for which $t\circ s$ is defined and any $a,~b\in R^\times$. There is a canonical functor $\hat{\C}\to \C$ which is the identity on objects and which sends a morphism $(s,a)$ in $\hat{C}$ to the morphism $s$ in $\C$. The isomorphism class of the extension $\hat{\C}$ of $\C$ by $R^\times$ depends only on the class of $\alpha$ in $H^2(\C;R^\times)$. It is clear that $\hat{\C}$ is finite. If $\C$ is an EI-category, then $\hat{\C}$ is an EI-category as well.} 
\end{notation}

\begin{proposition}\label{prop: isomorphism between khatC and kalphaC}
	Keep the notation of \ref{notation: extension of category}. The following hold:
	\begin{enumerate}[{\rm (i)}]
		\item The map sending a morphism $(s,a)$ in $\hat{\C}$ to $as\in k_\alpha \C$ induces a surjective $k$-algebra homomorphism $\pi:k\hat{\C}\to k_\alpha\C$.
		
		\item Let $e_{R}=\frac{1}{|R^\times|}\sum_{r\in R^\times}\sum_{X\in \Ob(\C)}r^{-1}(\Id_X,\alpha(\Id_X,\Id_X)^{-1}r)$. Then $e_R$ is a central idempotent in $k\hat{\C}$. The homomorphism $\pi$ in (i) restricts to a $k$-algebra isomorphism $k\hat{\C}e_R\cong k_\alpha\C$.
	\end{enumerate}
	
\end{proposition}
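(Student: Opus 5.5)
For (i), I will first check that $\pi$ is a $k$-algebra homomorphism on basis elements. The plan is to compute with the multiplication rule in $k\hat{\C}$. If $t\circ s$ is defined, then $(t,b)(s,a)=(t\circ s,ba\alpha(t,s))$ in $k\hat{\C}$. Its image under $\pi$ is $ba\alpha(t,s)\,t\circ s$. On the other side, $\pi(t,b)\pi(s,a)=(bt)(as)=ba\,\alpha(t,s)\,t\circ s$ in $k_\alpha\C$, so the two agree. If $t\circ s$ is not defined, both products are $0$. For unitality, $1_{k\hat{\C}}=\sum_X(\Id_X,\alpha(\Id_X,\Id_X)^{-1})$, because $(\Id_X,c)$ is the identity of $X$ in $\hat{\C}$ exactly when $c\,c\,\alpha(\Id_X,\Id_X)=c$. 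This maps to $\sum_X\alpha(\Id_X,\Id_X)^{-1}\Id_X$, which is the unit of $k_\alpha\C$ by Proposition \ref{prop: unit element}. Surjectivity is clear, since $\pi(s,1)=s$.

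For (ii), I will rewrite $e_R$ without the shifted second coordinate. In $\hat{\C}$, write $z_r:=\sum_X(\Id_X,\alpha(\Id_X,\Id_X)^{-1}r)$. The key observation is that $z_r$ is central in $k\hat{\C}$ and that $r\mapsto z_r$ is a group homomorphism from $R^\times$ into the units of $k\hat{\C}$. The central element "$r$" of each automorphism group acts as the scalar multiple $(s,a)\mapsto(s,ra)$. To verify this, compute $(\Id_Y,\alpha(\Id_Y,\Id_Y)^{-1}r)\circ(s,a)=(s,\alpha(\Id_Y,\Id_Y)^{-1}ra\,\alpha(\Id_Y,s))$. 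By the cocycle identity with $\Id_Y,\Id_Y,s$ (as in Proposition \ref{prop: unit element}), $\alpha(\Id_Y,s)=\alpha(\Id_Y,\Id_Y)$, so the result is $(s,ra)$. Composing on the other side gives $(s,ra)$ as well. Hence $z_r(s,a)=(s,ra)=(s,a)z_r$, so $z_rz_{r'}=z_{rr'}$ and $z_1=1$.

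It then follows that $e_R=\frac{1}{|R^\times|}\sum_r r^{-1}z_r$ is the standard idempotent attached to the linear character $R^\times\hookrightarrow k^\times$ of the central group $\{z_r\}$. It is central since each $z_r$ is. It is idempotent by the usual orthogonality computation, using that $|R^\times|$ is prime to $p$. Moreover, $z_re_R=re_R$.

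To get the isomorphism, I apply $\pi$ to $e_R$. Since $\pi(z_r)=r\cdot 1$, we get $\pi(e_R)=\frac{1}{|R^\times|}\sum_r r^{-1}r=1$. So $\pi$ restricted to $k\hat{\C}e_R$ is a unital algebra map onto $k_\alpha\C$, and it is surjective because $\pi(x e_R)=\pi(x)$. For injectivity, I compare dimensions. The relation $(s,a)e_R=z_a(s,1)e_R=a(s,1)e_R$ shows that $k\hat{\C}e_R$ is spanned by the elements $(s,1)e_R$, $s\in\Mor(\C)$. Thus $\dim k\hat{\C}e_R\le|\Mor(\C)|=\dim k_\alpha\C$, and so the surjection is an isomorphism.

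The only delicate step is the centrality computation, specifically getting the normalisation $\alpha(\Id_X,\Id_X)^{-1}$ right via the cocycle identity; everything else is routine.
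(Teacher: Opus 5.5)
Your proof is correct and follows the same route as the paper. Like the paper, you verify (i) on basis elements, show that $e_R$ is a central idempotent with $\pi(e_R)=1$, and use $(s,a)e_R=a(s,1)e_R$ to see that the elements $(s,1)e_R$ span $k\hat{\C}e_R$, so a dimension count makes the surjection an isomorphism. The one addition is that the central elements $z_r$, acting by $(s,a)\mapsto(s,ra)$, spell out the centrality and idempotency checks the paper calls straightforward.
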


\begin{proof}
	It is straightforward to check the statement (i) and the fact that $e_R$ is a central idempotent in $k\hat{\C}$. Let $1$ be the unit element of $k_\alpha\C$; by Proposition \ref{prop: unit element} we have $1=\sum_{X\in \Ob(\C)}\alpha(\Id_X,\Id_X)^{-1}\Id_X$. Since $e_R$ is a central idempotent in $k\hat{\C}$, we have $k\hat{\C}=k\hat{\C}e_R\oplus k\hat{\C}(1-e_R)$. By the definition of $\pi$ we see that $\pi(e_R)=1$. Hence we obtain by restriction a surjective unitary $k$-algebra homomorphism $k\hat{\C}e_R\to k_\alpha \C$. In order to prove that this is an isomorphism, it suffices to show that $\{(s,1_R)e_R\mid s\in \Mor(\C)\}$ is a $k$-basis of $k\hat{\C}e_R$. Indeed, for any objects $X,Y$ in $\C$, $s\in \Hom_\C(X,Y)$ and $a\in R^\times$, an easy calculation shows that 
	$$(s,a)e_R=a(s,1_R)e_R.$$
	Hence $\{(s,1_R)e_R\mid s\in \Mor(\C)\}$ is a $k$-basis of $k\hat{\C}e_R$.
\end{proof}	

\begin{lemma}[{\cite[Lemma 4.3 (i), (iii)]{LS}}]\label{lemma:correspondence of idempotents in extension} 	Keep the notation of \ref{notation: extension of category}. The following hold:
	\begin{enumerate}[{\rm (i)}]
		\item The canonical functor $\hat{\C}\to \C$ induces a bijection between the set of idempotents in $\hat{\C}$ and the set of idempotents in $\C$. The inverse of this bijection sends an idempotent $e$ in $\C$ to the idempotent $(e,\alpha(e,e)^{-1})$ in $\hat{\C}$.
		
		\item For any two idempotents $e$ and $f$ in $\C$, $e$ is isomorphic to $f$ if and only if $(e,\alpha(e,e)^{-1})$ is isomorphic to $(f,\alpha(f,f)^{-1})$ in $\hat{\C}$. More precisely, if $s\in f\circ \Hom_\C(X,Y)\circ e$ and $t\in e\circ \Hom_\C(Y,X)\circ f$ satisfies $t\circ s=e$ and $s\circ t=f$, then setting $c=\alpha(s,t)^{-1}\alpha(f,f)^{-1}$, we have $(t,c)\circ (s,1_R)=(e,\alpha(e,e)^{-1})$ and $(s,1_R)\circ (t,c)=(f,\alpha(f,f)^{-1})$.
	\end{enumerate}

\end{lemma}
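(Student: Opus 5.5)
For part (i), I will first check that the canonical functor $\hat{\C}\to\C$ sends idempotents to idempotents: if $(e,a)$ is idempotent in $\hat{\C}$, then $(e,a)\circ(e,a)=(e\circ e,a^2\alpha(e,e))=(e,a)$. This forces $e\circ e=e$, so $e$ is an idempotent endomorphism in $\C$. It also forces $a^2\alpha(e,e)=a$, that is, $a=\alpha(e,e)^{-1}$, since $a\in R^\times$ can be cancelled. Conversely, for an idempotent $e$ in $\C$, the same computation with $a=\alpha(e,e)^{-1}$ gives $a^2\alpha(e,e)=a$, so $(e,\alpha(e,e)^{-1})$ is an idempotent in $\hat{\C}$. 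Thus the functor is surjective on idempotents, and it is injective because the second coordinate is determined by $e$. This gives the bijection together with the stated inverse.

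For part (ii), the plan is to transport isomorphisms in both directions along the canonical functor, with the explicit formula handling one direction.

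\emph{From $\hat{\C}$ to $\C$.} Suppose $(e,\alpha(e,e)^{-1})$ and $(f,\alpha(f,f)^{-1})$ are isomorphic via morphisms $(s,a)$ and $(t,b)$. Applying the functor $\hat{\C}\to\C$ gives $t\circ s=e$ and $s\circ t=f$, so $e$ and $f$ are isomorphic in $\C$.

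\emph{From $\C$ to $\hat{\C}$.} By the remark in Section \ref{s2:Twisted category algebras and their idempotent endomorphisms}, we may choose $s,t$ with $s=f\circ s\circ e$ and $t=e\circ t\circ f$. I will then verify the two displayed identities directly. First,
$$(s,1_R)\circ(t,c)=(s\circ t,\;c\,\alpha(s,t))=(f,\;\alpha(s,t)^{-1}\alpha(f,f)^{-1}\alpha(s,t))=(f,\alpha(f,f)^{-1}),$$
which is immediate. Second,
$$(t,c)\circ(s,1_R)=(e,\;c\,\alpha(t,s)),$$
so what remains is the scalar identity
$$\alpha(t,s)\,\alpha(s,t)^{-1}\,\alpha(f,f)^{-1}=\alpha(e,e)^{-1}.$$

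This scalar identity is the main obstacle. I intend to derive it from the $2$-cocycle identity applied to a few well-chosen triples. The cocycle identity for $(t,s,t)$ gives
$$\alpha(t,s\circ t)\,\alpha(s,t)=\alpha(t\circ s,t)\,\alpha(t,s),$$
that is, $\alpha(t,f)\,\alpha(s,t)=\alpha(e,t)\,\alpha(t,s)$. Since $t\circ f=t$ and $e\circ t=t$, the cocycle identity for $(t,f,f)$ gives $\alpha(t,f)=\alpha(f,f)$, and for $(e,e,t)$ gives $\alpha(e,t)=\alpha(e,e)$. This mirrors the argument in the proof of Proposition \ref{prop: unit element}. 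Substituting these yields $\alpha(f,f)\,\alpha(s,t)=\alpha(e,e)\,\alpha(t,s)$, which rearranges to exactly the required identity.

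I expect the only subtle point to be the order of arguments in the cocycle identity, so I will double-check each instance.
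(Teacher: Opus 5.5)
Your proof is correct. It is the direct verification the paper takes for granted: the paper gives no proof of this lemma and simply cites \cite[Lemma 4.3]{LS}. The scalar identity $\alpha(t,s)\alpha(e,e)=\alpha(s,t)\alpha(f,f)$ that your argument reduces to is exactly what the paper imports as \cite[Lemma 5.3]{LS} in the proof of Proposition \ref{prop:commutative diagram for isomorphisms between idempotents}. Your three instances of the $2$-cocycle identity, for the chains $(t,s,t)$, $(t,f,f)$ and $(e,e,t)$, are all correctly oriented and prove that identity self-containedly.
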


\begin{proposition}\label{prop:commutative diagram for isomorphisms between idempotents}
	Keep the notation of \ref{notation: extension of category}. Let $X$ be an object in $\C$ and $e\in \End_\C(X)$ an idempotent. Then by Lemma \ref{lemma:correspondence of idempotents in extension}, $(e,\alpha(e,e)^{-1})$ is an idempotent in $\End_{\hat{\C}}(X)$. One checks that the invertible elements in the monoid $(e,\alpha(e,e)^{-1})\End_{\hat{\C}}(X)(e,\alpha(e,e)^{-1})$ is exactly the group $G_e\times R^\times$ with the multiplication described in Notation \ref{notation: extension of category}. We denote this group by $\hat{G}_e$. Then the following hold:
	\begin{enumerate}[{\rm (i)}]
		\item The $k$-algebra homomorphism (resp. isomorphism) $\pi:k\hat{\C}\to k_\alpha\C$  (resp. $\pi:k\hat{\C}e_R\to k_\alpha\C$) in Proposition \ref{prop: isomorphism between khatC and kalphaC} restricts to a $k$-algebra homomorphism (resp. isomorphism) $\pi_e: k\hat{G}_e\cong k_\alpha G_e$ (resp. $\pi_e:e_Rk\hat{G}_ee_R =k\hat{G}_ee_R\cong k_\alpha G_e$).
		\item Let $Y\in \Ob(\C)$ and $f\in \End_\C(Y)$ be an idempotent isomorphic to $e$, then by Lemma \ref{lemma:correspondence of idempotents in extension}, the idempotent $(f,\alpha(f,f)^{-1})$ is isomorphic to $(e,\alpha(e,e)^{-1})$. Let $s$, $t$ and $c$ be chosen as in Lemma \ref{lemma:correspondence of idempotents in extension} (ii). Let $\hat{\varphi}$ be the group isomorphism $\hat{G}_e\cong \hat{G}_f$ sending $\hat{x}\in \hat{G}_e$ to $(s,1_R)\circ \hat{x}\circ (t,c)$. Then $\hat{\varphi}$ extends to a $k$-algebra isomorphism $k\hat{G}_e\cong k\hat{G}_f$, which we still denote by $\hat{\varphi}$. Let $\varphi$ be the $k$-algebra isomorphism $k_\alpha G_e\cong k_\alpha G_f$ described in Proposition \ref{prop:isomorphic idempotents} (i). The diagram 
		$$\xymatrix{k\hat{G}_ee_R\ar[rr]^{\hat{\varphi}} \ar@{_{(}->}[d] &   &   k\hat{G}_fe_R \ar@{^{(}->}[d] \\
			k\hat{G}_e\ar[rr]^{\hat{\varphi}} \ar[d]_{\pi_e} &   &   k\hat{G}_f \ar[d]^{\pi_f}   \\
			k_\alpha G_e \ar[rr]^{\varphi}  &    &  k_\alpha G_f
		}$$
		is commutative.
		
	\end{enumerate}
\end{proposition}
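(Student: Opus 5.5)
The plan is to verify both parts by direct computation on the basis elements of the group algebras, using only the composition law in $\hat{\C}$ from Notation \ref{notation: extension of category} and the cocycle identity.

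For (i), first check that $\hat{G}_e=G_e\times R^\times$. Write $\hat e=(e,\alpha(e,e)^{-1})$. A morphism $(x,a)$ lies in $\hat e\circ\End_{\hat\C}(X)\circ\hat e$ if and only if $x\in e\circ\End_\C(X)\circ e$, since the scalar part is arbitrary. It is invertible there if and only if $x\in G_e$: the functor $\hat\C\to\C$ maps inverses to inverses, and conversely $(x,a)$ with $x\in G_e$ has inverse $(x^{-1},b)$ with $b$ solving a single scalar equation. Next, $\pi$ sends $(x,a)$ to $ax\in k_\alpha G_e$, so it maps $k\hat G_e$ into $k_\alpha G_e$. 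It is multiplicative because $\pi$ is an algebra homomorphism on $k\hat\C$. It is unital since
\[
\pi(\hat e)=\alpha(e,e)^{-1}e
\]
is the unit of $k_\alpha G_e$. For the isomorphism, note that $e_R$ is central in $k\hat\C$ and commutes with $\hat e$. Using $(x,a)e_R=a(x,1_R)e_R$ (the identity from the proof of Proposition \ref{prop: isomorphism between khatC and kalphaC}), one sees that $k\hat G_e e_R$ is spanned by the elements $(x,1_R)e_R$. Also $e_Rk\hat G_ee_R=k\hat G_ee_R$ because $e_R$ is central. Rather than working with $e_R$ inside $k\hat\C$, I would work with the group idempotent $\epsilon=\frac{1}{|R^\times|}\sum_r r^{-1}(e,\alpha(e,e)^{-1}r)$ of the central subgroup $\{e\}\times R^\times$; this is the part of $e_R$ living in $k\hat G_e$. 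Then $\pi$ maps the spanning set bijectively onto the basis $G_e$, and comparing dimensions gives injectivity.

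For (ii), first check that $\hat\varphi$ is a group isomorphism. It sends $\hat e$ to $(s,1_R)\hat e(t,c)=(s\circ t,\ldots)=\hat f$ by Lemma \ref{lemma:correspondence of idempotents in extension}(ii). Multiplicativity follows from $(t,c)\circ(s,1_R)=\hat e$, and the inverse is $\hat y\mapsto(t,c)\hat y(s,1_R)$. The top square commutes once $\hat\varphi(e_R)=e_R$. This holds because $\hat\varphi$ maps the central subgroup $\{e\}\times R^\times$ onto $\{f\}\times R^\times$ via $(e,\alpha(e,e)^{-1}r)\mapsto(f,\alpha(f,f)^{-1}r)$, which is again a direct check using the lemma and associativity.

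The main obstacle is the bottom square, namely
\[
\pi_f(\hat\varphi(x,a))=\varphi(\pi_e(x,a))=a\beta(x)\,(s\circ x\circ t).
\]
First compute the left side:
\[
(s,1_R)\circ(x,a)\circ(t,c)=\bigl(s\circ x\circ t,\; a\,c\,\alpha(x,t)\,\alpha(s,x\circ t)\bigr).
\]
So one must verify the scalar identity
\[
\alpha(s,t)^{-1}\alpha(f,f)^{-1}=\alpha(e,e)^{-1}\alpha(t,s)^{-1},
\]
i.e.\ $\alpha(t,s)\alpha(f,f)=\alpha(s,t)\alpha(e,e)$. I would derive this from the cocycle identity applied to the triples $(s,t,s)$ and $(f,s,t)$, $(s,e,t)$ etc. Concretely, $\alpha(s,t\circ s)\alpha(t,s)=\alpha(s\circ t,s)\alpha(s,t)$ gives $\alpha(s,e)\alpha(t,s)=\alpha(f,s)\alpha(s,t)$. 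Combining this with the normalisations $\alpha(s,e)=\alpha(e,e)$ and $\alpha(f,s)=\alpha(f,f)$ yields the identity; these normalisations come from the cocycle identity on the triples $(s,e,e)$ and $(f,f,s)$, using $s\circ e=s=f\circ s$. If this direct bookkeeping fails to close, the fallback is to quote the derivation of $\beta$ in \cite[Proposition 5.2]{LS}, which is precisely the computation of $\pi((s,1)(x,1)(t,c))$.
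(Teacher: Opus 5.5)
Your proof is correct and follows the paper's route. Like the paper, you compute on basis elements $(x,a)$ and reduce the bottom square to the identity $\alpha(t,s)\alpha(e,e)=\alpha(s,t)\alpha(f,f)$.

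The differences are minor:
\begin{itemize}
\item You bracket the product as $(s,1_R)\circ((x,a)\circ(t,c))$, so the scalar matches $\beta(x)$ directly and you need no separate cocycle step on $(s,x,t)$.
\item You derive the identity above from three cocycle identities instead of quoting it from the literature.
\item You make explicit the checks for (i) and for the top square that the paper leaves to the reader. The top-square check uses that $\hat\varphi$ sends $(e,\alpha(e,e)^{-1}r)$ to $(f,\alpha(f,f)^{-1}r)$.
\end{itemize}
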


\begin{proof}
	Statement (i) is a straightforward verification by using the explicit construction of the map $\pi$ described in Proposition \ref{prop: isomorphism between khatC and kalphaC} (i). For the statement (ii), let $(x,\lambda)\in \hat{G}_e$, where $x\in G_e$ and $\lambda\in R^\times$. Then 
	\begin{align*}
		\begin{split}
			\pi_f\circ\hat{\varphi}((x,\lambda)) &=\pi_f((s,1_R)\circ (x,\lambda) \circ (t,c)) \\
			&=\lambda c\alpha(s,x)\alpha(s\circ x,t) s\circ x\circ t\\
			&=\lambda\alpha(s,t)^{-1}\alpha(f,f)^{-1}\alpha(s,x)\alpha(s\circ x,t) s\circ x\circ t
		\end{split}
	\end{align*}
	and 
	\begin{align*}
		\begin{split}
			\varphi\circ \pi_e ((x,\lambda)) &=\varphi(\lambda x)=\lambda\beta(x)s\circ x\circ f\\
			&=\lambda\alpha(x,t)\alpha(s,x\circ t)\alpha(e,e)^{-1}\alpha(t,s)^{-1} s\circ x\circ f
		\end{split}
	\end{align*}
	The 2-cocycle identity applied with $s$, $x$, $t$ yields
	$$\alpha(s,x)\alpha(s\circ x,t)=\alpha(s,x\circ t)\alpha(x,t).$$
	By \cite[Lemma 5.3]{LS} we have 
	$$\alpha(t,s)\alpha(e,e)=\alpha(s,t)\alpha(f,f).$$
	Using the above two equalities, we obtain 
	$\pi_f\circ\hat{\varphi}((x,\lambda))=\varphi\circ \pi_e ((x,\lambda))$, whence (ii). 
\end{proof}

\section{Weights of $k_\alpha\O_\C$ and proof of Theorem \ref{theorem:main}}\label{s3:weights of O and proof of main}

\begin{definition}[{\cite[1.4]{LS}}]\label{defi:weight algebra}
{\rm Let $\C$ be a finite category and $\alpha\in Z^2(\C;k^\times)$. A {\it weight} of $k_\alpha\C$ is a pair $(e,T)$ consisting of an idempotent endomorphism $e$ of an object $X$ in $\C$ and a projective simple $k_\alpha G_e$-module $T$. 
	
}
\end{definition}	

In the rest of this section, let $\C$ be a finite category with $p$-transporter category $\T_\C$ and associated $p$-orbit category $\O_\C$. The canonical functor $\T_\C\to \O_\C$ is the identity on objects, and surjective on morphisms between any pair of objects in $\T_\C$. For any object $(X,P)$ in $\T_\C$, the kernel of the canonical moniod homomorphism $\End_{\T_\C}((X,P))\to\End_{\O_\C}((X,P))$ can be identified with $P$. For any two objects $(X,P)$ and $(Y,Q)$ in $\T_\C$, the canonical map $$\Hom_{\T_\C}((X,P),(Y,Q))\to \Hom_{\O_\C}((X,P),(Y,Q))$$ induces a bijection between the sets $Q\backslash \Hom_{\T_\C}((X,P),(Y,Q))$ and $\Hom_{\O_\C}((X,P),(Y,Q))$.

\begin{lemma}[{\cite[Lemmas 3.2, 3.3, 3.4]{Lin14}}]\label{lemma:maximal subgroups of orbit category}
Let $(X,P)$ and $(Y,Q)$ be objects in $\T_\C$. Identify morphisms between $(X,P)$ and $(Y,Q)$ in $\T_\C$ (resp. $\O_\C$) with their canonical images in $\Hom_\C(X,Y)$ (resp. $Q\backslash \Hom_\C(X,Y)/P$). Let $e\in \End_{\T_\C}((X,P))$ and $f\in \End_{\T_\C}((Y,Q))$ be idempotent endomorphisms. Denote by $\bar{e}=P\circ e \circ P=P\circ e$ the canonical image of $e$ in $\End_{\O_\C}((X,P))$, and by $G_e$ the group of invertible elements in the monoid $e\circ \End_\C(X)\circ e$.  The following hold:
\begin{enumerate}[{\rm (i)}]
	\item For any idempotent $d'\in\End_{\O_\C}((X,P))$, there is an idempotent $d\in \End_{\T_\C}((X,P))$ such that $d'=P\circ d \circ P$.
	\item  The idempotents $e$ and $f$ are isomorphic in $\T_\C$ if and only if $\bar{e}$ and $\bar{f}$ are isomorphic idempotents in $\O_\C$.
	\item The group of invertible elements in the monoid $e\circ \End_{\T_\C}((X,P))\circ e$ is equal to $N_{G_e}(e\circ P)$.
	\item The group of invertible elements in the monoid $\bar{e}\circ \End_{\O_\C}((X,P))\circ \bar{e}$ is equal to $N_{G_e}(e\circ P)/(e\circ P)$. 
\end{enumerate}
\end{lemma}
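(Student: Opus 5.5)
The lemma is cited from \cite[Lemmas 3.2, 3.3, 3.4]{Lin14}, but I would verify each part directly from the definitions of $\T_\C$ and $\O_\C$. The basic tool is the canonical monoid homomorphism $\End_{\T_\C}((X,P))\to\End_{\O_\C}((X,P))$. It is surjective, and two elements $u,v$ have the same image exactly when $P\circ u=P\circ v$.

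For (i), let $d'=P\circ u\circ P$ be an idempotent with $u\in\End_{\T_\C}((X,P))$. I would use the standard lifting of idempotents through a surjection of finite monoids. The image $\bar u$ of $u$ is idempotent, so every power $u^m$ with $m\ge 1$ also has image $\bar u$. Some power $d=u^m$ is idempotent in the finite monoid $\End_{\T_\C}((X,P))$, and then $P\circ d\circ P=d'$.

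For (ii), the forward direction is immediate, because functors preserve isomorphisms of idempotents. For the converse, suppose $\bar t\circ\bar s=\bar e$ and $\bar s\circ\bar t=\bar f$ in $\O_\C$, and choose lifts $s,t$ in $\T_\C$. Then $t\circ s=x\circ e$ for some $x\in P$, and since $e\circ P\subseteq P\circ e$ we may also assume $s=f\circ s\circ e$ and $t=e\circ t\circ f$. I expect this step to be the main obstacle: the composite $t\circ s$ is only $e$ up to an element of $e\circ P$, and we must correct the lift. Set $y=e\circ t\circ s$. It lies in $G_e$, because $\bar y=\bar e$ means $y\in(P\circ e)\cap e\circ\End_\C(X)\circ e=e\circ P$, which is a $p$-subgroup of $G_e$. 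Replacing $t$ by $y^{-1}\circ t$, which is still a morphism of $\T_\C$ since $y^{-1}\in e\circ P$, gives $t\circ s=e$. Similarly $s\circ t$ is an idempotent in $f\circ Q$, and a $p$-group contains only one idempotent, namely its identity, so $s\circ t=f$. One should check that $s\circ t$ really lies in the group $f\circ Q$; this follows because $(s\circ t)^2=s\circ t$ and $\overline{s\circ t}=\bar f$.

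For (iii), an element $x$ invertible in $e\circ\End_{\T_\C}((X,P))\circ e$ is in particular invertible in $e\circ\End_\C(X)\circ e$, so $x\in G_e$. The condition $x\circ P\subseteq P\circ x$, after multiplying by $e$ on both sides, gives $x\circ(e\circ P)\subseteq(e\circ P)\circ x$. Since the group is finite, this inclusion is an equality, so $x$ normalizes $e\circ P$. Conversely, if $x\in N_{G_e}(e\circ P)$, then $x\circ P=x\circ e\circ P=(e\circ P)\circ x\subseteq P\circ x$, and $x^{-1}$ satisfies the same condition, so $x$ is invertible in the monoid.

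For (iv), I would apply the surjection from (iii) restricted to $e\circ\End_{\T_\C}((X,P))\circ e\to\bar e\circ\End_{\O_\C}((X,P))\circ\bar e$. Invertible elements lift to invertible elements: given an invertible $\bar x$ with inverse $\bar z$, we have $z\circ x\in e\circ P\circ e$, which is a group element, so $x$ is invertible after correcting by an element of $e\circ P$, exactly as in (ii). Hence the group of invertible elements of $\bar e\circ\End_{\O_\C}((X,P))\circ\bar e$ is the image of $N_{G_e}(e\circ P)$. The kernel consists of the elements $x$ with $P\circ x=P\circ e$, that is, $x\in e\circ P$. Therefore this group is $N_{G_e}(e\circ P)/(e\circ P)$.
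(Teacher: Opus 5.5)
Your proposal is correct. The paper gives no proof of its own here; it imports the lemma from \cite{Lin14}. Your direct verification from the definitions is the standard argument: lift idempotents by taking powers in the finite monoid $\End_{\T_\C}((X,P))$, and use that the fibre of $\bar e$ in $e\circ\End_\C(X)\circ e$ is the group $e\circ P$ (which rests on $e\circ P\circ e=e\circ P$) to correct lifts in (ii) and (iv). One step should be re-justified: the normalisation $s=f\circ s\circ e$, $t=e\circ t\circ f$ in (ii) comes from replacing $\bar s,\bar t$ by $\bar f\circ\bar s\circ\bar e$ and $\bar e\circ\bar t\circ\bar f$ (as in the opening paragraph of Section 2), not from the inclusion $e\circ P\subseteq P\circ e$.
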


\begin{remark} 
	{\rm By Lemma \ref{lemma:maximal subgroups of orbit category}, we easily see that:
		
	 (i) The isomorphism $\varphi|_{k_\alpha N_{G_e}(e\circ P)}:k_\alpha N_{G_e}(e\circ P)\cong k_\alpha N_{G_f}(f\circ Q)$ in Proposition \ref{propositon:NGe(eP)/eP} coincides the isomorphism obtained by applying Proposition \ref{prop:isomorphic idempotents} (i) to $\T_\C$, $(e,P,P)$, $(f,Q,Q)$, $(s,P,Q)$, $(t,Q,P)$ instead of $\C$, $e$, $f$, $s$, $t$, respectively.
		
	(ii)	The isomorphism $\bar{\varphi}: k_\alpha N_{G_e}(e\circ P)/(e\circ P)\cong k_\alpha N_{G_f}(f\circ Q)/(f\circ Q)$ in Proposition \ref{propositon:NGe(eP)/eP} coincides the isomorphism obtained by applying Proposition \ref{prop:isomorphic idempotents} (i) to $\O_\C$, $\overline{(e,P,P)}$, $\overline{(f,Q,Q)}$, $\overline{(s,P,Q)}$, $\overline{(t,Q,P)}$ instead of $\C$, $e$, $f$, $s$, $t$, respectively. Here the overline notation denotes taking the canonical image in $\O_\C$.
		
	}	
\end{remark}

\begin{remark}\label{lemma:weights of orbit category}
{\rm Let $\alpha\in Z^2(\O_\C;k^\times)$. 
	
(i) Combining Definition \ref{defi:weight algebra} and Lemma \ref{lemma:maximal subgroups of orbit category}, we see that: a weight of $k_\alpha\O_\C$ is exactly a pair $(\bar{e},T)$, where $\bar{e}=P\circ e\circ P$ for some idempotent endomorphism $e$ of some object $X$ in $\C$, $P$ is a not necessarily unitary $p$-subgroup of the moniod $\End_\C(X)$, and $T$ is a projective simple $k_\alpha N_{G_e}(e\circ P)/(e\circ P)$-module. We identify the weight $(\bar{e},T)$, the quadruple $(e,P,P,T)$ and the pair $((e,P,P),T)$. Note that $(e,P,P)$ is an idempotent endomorphism of $(X,P)$ in the category $\T_\C$. Consider $G_e$ as a category with one object $X$, then $(e\circ P,T)=(P\circ e\circ P,T)$ is a weight of $k_\alpha\O_{G_e}$. 
	
(ii) Conversely, let $X$ be an object in $\C$ and $e\in \End_\C(X)$ an idempotent. A weight of $k_\alpha\O_{G_e}$ is of the form $(P, T)$, where $P$ is a $p$-subgroup of $G_e$, and $T$ is a projective simple $k_\alpha N_{G_e}(P)/P$-module. One easily sees that the pair $(P,T)=(P\circ e\circ P,T)$ is also a weight of $k_\alpha\O_\C$.

(iii)  Let $X$ and $Y$ be objects in $\C$. Let $e\in \End_\C(X)$ and $f\in \End_\C(Y)$ be a pair of isomorphic idempotents. Let $(P,T)$ be a weight of $k_\alpha \O_{G_e}$ and $(Q,T')$ a weight of $k_\alpha \O_{G_f}$. Denote by $[(P,T)]\in\W(k_\alpha\O_{G_e})$ (resp. $[(Q,T')]\in\W(k_\alpha\O_{G_f})$) the isomorphism class of $(P,T)$ (resp. $(Q,T')$). We say that $[(P,T)]$ and $[(Q,T')]$ are {\it isomorphic} if the pairs $((e,P,P),T)$ and $((f,Q,Q),T')$ are isomorphic in the sense of Definition \ref{defi:isomorphism of pairs}. In fact, this is equivalent to that as weights of $k_\alpha\O_\C$, $(P,T)$ and $(Q,T')$ are in the same isomorphism class.

(iv) Keep the notation in (iii) and assume that $[(P,T)]$ and $[(Q,T')]$ are isomorphic. Then by Proposition \ref{propositon:isomorphic idempotents in T}, there exist $s\in f\circ \Hom_\C(X,Y)\circ e$ and $t\in e\circ \Hom_\C(Y,X)\circ f$ such that $t\circ s=e$, $s\circ t=f$ and $Q=s\circ P\circ t$. Let $\varphi$ be the $k$-algebra isomorphism $\varphi:k_\alpha G_e\cong k_\alpha G_f$ described in Proposition \ref{prop:isomorphic idempotents}. By Proposition \ref{propositon:NGe(eP)/eP}, $\varphi$ induces a $k$-algebra isomorphism $\bar{\varphi}:k_\alpha N_{G_e}(P)/P\cong k_\alpha N_{G_f}(Q)/Q$. The pairs $((e,P,P),T)$ and $((f,Q,Q),T')$ being isomorphic implies that the isomorphism classes of $T$ and $T'$ correspond to each other via $\bar{\varphi}$ (that is, $T'\cong {}_{\bar{\varphi}} T$ as $k_\alpha N_{G_f}(Q)/Q$-modules). 
}
\end{remark}

\begin{lemma}[{\cite[Lemma 3.5]{Lin14}}]\label{lemma: L14 Lemma 3.5}
Let $\E$ be a set of representatives of the isomorphism classes of idempotent endomorphisms in $\C$. For any $e\in\E$, denote by $X_e$ the object in $\C$ of which $e$ is an idempotent endomorphism, by $G_e$ the subgroup of invertible elements of the monoid $e\circ \End_\C(X_e)\circ e$ and by $\X_e$ a set of representatives of the $G_e$-conjugacy classes of $p$-subgroups of $G_e$. Then the following hold:
\begin{enumerate}[{\rm (i)}]
	\item The set $\{(X_e,P)\mid e\in \E, ~P\in\X_e\}$ is a set of representatives of the isomorphism classes of objects in $\T_\C$.
	\item  The set $\{(e,P,P)\mid e\in \E, ~P\in\X_e\}$ is a set of representatives of the isomorphism classes of idempotent endomorphisms in $\T_\C$.
\end{enumerate}	
Combining (ii) and Lemma \ref{lemma:maximal subgroups of orbit category} (i), (ii) we also have: 
\begin{enumerate}[{\rm (iii)}]
\item The set $\{(\bar{e},P,P)\mid e\in \E, ~P\in\X_e\}$, where $\bar{e}=P\circ e\circ P$, is a set of representatives of the isomorphism classes of idempotent endomorphisms in $\O_\C$.
\end{enumerate} 
\end{lemma}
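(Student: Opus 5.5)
The statement to prove is Lemma \ref{lemma: L14 Lemma 3.5}, quoted from \cite{Lin14}. Parts (i) and (ii) are Linckelmann's. For (iii) the plan is to combine (ii) with Lemma \ref{lemma:maximal subgroups of orbit category}, as indicated; I sketch all three parts in the paper's notation.

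For (ii), take an arbitrary idempotent endomorphism $(d,Q,Q)$ of an object $(Y,Q)$ in $\T_\C$. By Proposition \ref{propositon:isomorphic idempotents in T}, it suffices to find $e\in\E$ and $s,t$ with $t\circ s=d$, $s\circ t=e$ and $s\circ(d\circ Q)\circ t=e\circ P$ for some $P\in\X_e$.

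The first step is to note that $d$ is an idempotent in $\C$. Hence $d$ is isomorphic to a unique $e\in\E$ via some $s\in e\circ\Hom_\C(Y,X_e)\circ d$ and $t\in d\circ \Hom_\C(X_e,Y)\circ e$.

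Next, as observed in the proof of Proposition \ref{propositon:isomorphic idempotents in T}, $d\circ Q$ is a $p$-subgroup of $G_d$. Conjugation $x\mapsto s\circ x\circ t$ is a group isomorphism $G_d\cong G_e$, so $s\circ(d\circ Q)\circ t$ is a $p$-subgroup of $G_e$. It is therefore $G_e$-conjugate to some $P\in\X_e$, say $g\circ(s\circ d\circ Q\circ t)\circ g^{-1}=P$ with $g\in G_e$.

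Replacing $s$ by $g\circ s$ and $t$ by $t\circ g^{-1}$ preserves $t\circ s=d$ and $s\circ t=e$, and now gives $s\circ (d\circ Q)\circ t=P=e\circ P$. So $(d,Q,Q)\cong(e,P,P)$.

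Distinctness of the representatives goes the other way. Suppose $(e,P,P)\cong(e',P',P')$ with $e,e'\in\E$. Forgetting to $\C$ shows $e\cong e'$, hence $e=e'$. Proposition \ref{propositon:isomorphic idempotents in T} then gives $s,t\in G_e$ with $t=s^{-1}$ and $s\circ P\circ s^{-1}=P'$. Thus $P$ and $P'$ are $G_e$-conjugate, so $P=P'$.

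For (i), recall that every object $(Y,Q)$ carries the idempotent $(1_Q,Q,Q)$. The aim is to show that isomorphism classes of objects correspond to isomorphism classes of those idempotents $(d,Q,Q)$ for which $d=1_Q$. This requires checking that isomorphic idempotents $(1_Q,Q,Q)$ and $(1_{Q'},Q',Q')$ force $(Y,Q)\cong (Y',Q')$. Here I would follow \cite[Lemma 3.1]{Lin14}: in $\T_\C$ the object $(Y,Q)$ is isomorphic to $(Y,1_Q\circ Q)$ through $1_Q$, and the unitary subgroup case then reduces to (ii). This bookkeeping with nonunitary subgroups is the main obstacle, since one must use $s=s\circ 1_P=1_Q\circ s$ carefully.

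For (iii), Lemma \ref{lemma:maximal subgroups of orbit category} (i) says every idempotent of $\O_\C$ lifts to an idempotent of $\T_\C$. Lemma \ref{lemma:maximal subgroups of orbit category} (ii) says isomorphism of idempotents is detected equally in $\T_\C$ and $\O_\C$. Hence the map from isomorphism classes of idempotents of $\T_\C$ to those of $\O_\C$ is a bijection, and (iii) follows from (ii).
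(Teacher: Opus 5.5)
Your proposal is correct. For (iii) you argue exactly as the paper does:
\begin{itemize}
\item Lemma \ref{lemma:maximal subgroups of orbit category} (i) gives surjectivity of the map on isomorphism classes of idempotents from $\T_\C$ to $\O_\C$.
\item Part (ii) of that lemma gives that this map is well defined and injective.
\end{itemize}

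For (i) and (ii) the paper gives no argument and simply cites \cite[Lemma 3.5]{Lin14}, so what you supply there is a genuine addition. Your proof of (ii) via Proposition \ref{propositon:isomorphic idempotents in T} is a valid self-contained substitute: you transport $d\circ Q$ into $G_e$ along $x\mapsto s\circ x\circ t$, replace $(s,t)$ by $(g\circ s,\,t\circ g^{-1})$, and conclude distinctness from the fact that $s,t\in G_e$ when $e=e'$. It is also not circular, since that proposition depends only on \cite[Lemma 3.1 (iv)]{Lin14}.

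Your route for (i) also works, but the ``main obstacle'' you flag there does not exist. The sentence about $(Y,1_Q\circ Q)$ is vacuous, because $1_Q\circ Q=Q$, and it reduces nothing to a unitary case. You only need two observations:
\begin{itemize}
\item $(1_Q,Q,Q)$ is the identity morphism of $(Y,Q)$. So, by the very definition of isomorphic idempotents, two such identities are isomorphic exactly when the objects are isomorphic.
\item For $P\le G_e$ one has $1_P=e$. So each representative $(e,P,P)$ in (ii) is the identity morphism of $(X_e,P)$.
\end{itemize}
With these, (i), including the distinctness of the representatives, is an immediate corollary of (ii). State these two facts explicitly instead of the detour through \cite[Lemma 3.1]{Lin14}.
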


\begin{proposition}\label{proposition:bijection between isomorphic weights commutes with Galois}
	Let $\alpha\in Z^2(\O_\C;k^\times)$. Let $X$ and $Y$ be objects in $\C$. Let $e\in \End_\C(X)$ and $f\in \End_\C(Y)$ be a pair of isomorphic idempotents. 
	\begin{enumerate}[{\rm (i)}]
		\item For any weight $(P,T)$ of $k_\alpha \O_{G_e}$, there is a unique isomorphism class $[(Q,T')]$ of weight of $k_\alpha\O_{G_f}$ isomorphic to $[(P,T)]$. 
		
		\item Assume that $\alpha\in Z^2(\O_\C;\F_p^\times)$. The correspondence $[(P,T)]\mapsto [(Q,T')]$ defines a bijection $\Omega_{e,f}:\W(k_\alpha\O_{G_e})\to \W(k_\alpha\O_{G_f})$ which is  commuting with the action of the Galois group $\Gamma$. In other words, for any $\sigma\in \Gamma$, $[(P,T)]\cong [(Q,T')]$ if and only if $[(P,{}^\sigma T)]\cong [(Q,{}^\sigma T')]$.
		\item The inverse of the bijection $\Omega_{e,f}$ is $\Omega_{f,e}$.
		\item For any three isomorphic idempotents $d$, $e$ and $f$ in $\C$, we have $\Omega_{e,f}\circ \Omega_{d,e}=\Omega_{d,f}$.
	\end{enumerate}
	
\begin{proof}
By Lemma \ref{lemma: L14 Lemma 3.5} (ii), up to $G_f$-conjugation there is a unique $p$-subgroup $Q$ of $G_f$ such  that $(f,Q,Q)$ is isomorphic to $(e,P,P)$ in $\T_\C$. By Proposition \ref{propositon:isomorphic idempotents in T}, there exist $s\in f\circ \Hom_\C(X,Y)\circ e$ and $t\in e\circ \Hom_\C(Y,X)\circ f$ such that $t\circ s=e$, $s\circ t=f$ and $Q=s\circ P\circ t$. Let $\varphi$ be the $k$-algebra isomorphism $\varphi:k_\alpha G_e\cong k_\alpha G_f$ described in Proposition \ref{prop:isomorphic idempotents}. By Proposition \ref{propositon:NGe(eP)/eP}, $\varphi$ induces a $k$-algebra isomorphism $\bar{\varphi}:k_\alpha N_{G_e}(P)/P\cong k_\alpha N_{G_f}(Q)/Q$. Then there is a unique isomorphism class of simple $k_\alpha N_{G_f}(Q)$-module, say $[T']$, such that $[T]$ and $[T']$ correspond to each other via $\bar{\varphi}$. Hence $[(Q,T')]$ is the unique isomorphism class of weight of $k_\alpha\O_{G_f}$ isomorphic to $[(P,T)]$, proving statement (i). For statement (ii), note that ${}_{\bar{\varphi}}T\cong T'$ if and only if ${}^\sigma({}_{\bar{\varphi}}T)\cong {}^\sigma T'$. Since $\alpha$ is in $Z^2(\C;\F_p^\times)$, by the explicit construction of the isomorphism $\varphi$ in Proposition \ref{prop:isomorphic idempotents} (i), $\bar{\varphi}$ is defined over $\F_p$. Using a similar argument as in the proof of Proposition \ref{prop: Galois action on pairs}, we have ${}^\sigma({}_{\bar{\varphi}} T)\cong {}_{\bar{\varphi}}({}^\sigma T)$ as $k_\alpha N_{G_f}(Q)/Q$-modules. Therefore, ${}_{\bar{\varphi}}T\cong T'$ if and only if ${}_{\bar{\varphi}}({}^\sigma T)\cong {}^\sigma T'$, whence (ii). Statements (iii) and (iv) are trivial.
\end{proof}
	
\end{proposition}

\begin{proof}[Proof of Theorem \ref{theorem:main}]
We will use the notation of Lemma \ref{lemma: L14 Lemma 3.5}. By Theorem \ref{theo:bijection between simple moudles and pairs}, there is a bijection
$$\Pi: \S(k_\alpha \C)\to \bigsqcup_{e\in \E} \S(k_\alpha G_e),$$
where the symbol $\bigsqcup$ denotes the disjoint union.
By Proposition \ref{prop:pi commutes with gamma}, $\Pi$ is commuting with the action of $\Gamma$. By assumption, the GAWC holds for $k_\alpha G_e$, that is, for any $e\in \E$ there is a $\Gamma$-equivariant bijection 
$$\S(k_\alpha G_e)\to \bigsqcup_{P\in \X_e}\mathcal{Z}(k_\alpha N_{G_e}(P)/P),$$
where $\mathcal{Z}(k_\alpha N_{G_e}(P)/P)$ is the set of isomorphism classes of projective simple $k_\alpha N_{G_e}(P)/P$-modules. Hence there is a bijection
$$\S(k_\alpha \C)\to \bigsqcup_{e\in\E}\bigsqcup_{P\in \X_e} \mathcal{Z}(k_\alpha N_{G_e}(P)/P)$$
commuting with the action of $\Gamma$. It remains to show that the right side corresponds to a set of representatives of the isomorphism classes of weights of $k_\alpha \O_\C$. In this double union, $e$ runs over $\E$ and $P$ over $\X_e$. By Lemma \ref{lemma: L14 Lemma 3.5} (ii), this implies that the triples $(e,P,P)$ runs over a set of representatives of the isomorphism classes of idempotent endomorphisms in $\T_\C$. By Lemma \ref{lemma:maximal subgroups of orbit category} (ii), the images of the triples in the morphism sets of $\O_\C$ runs over a set of representatives of the isomorphism classes of idempotent endomorphisms in $\O_\C$. By Lemma \ref{lemma:maximal subgroups of orbit category} (iv), the maximal subgroup determined by the image of any such $(e,P,P)$ in $\O_\C$ is $N_{G_e}(P)/P$, and hence when $[S]$ runs over $\mathcal{Z}(k_\alpha N_{G_e}(P)/P)$, the quadruple $(e,P,P,S)$ runs over a set of representatives of the isomorphism classes of weights of $k_\alpha\O_\C$ associated with the image of $(e,P,P)$ in $\O_\C$. Therefore, the double union corresponds to a set of representatives the isomorphism classes of weights of $k_\alpha \O_\C$.
\end{proof}

\section{The Brauer construction applied to twisted group algebras}\label{s:Brauer construction}

Let $G$ be a finite group. A {\it $G$-algebra over $k$} is a $k$-algebra $A$ endowed with an action of $G$ by $k$-algebra automorphisms, denoted $a\mapsto {}^ga$, where $a\in A$ and $g\in G$. For any $p$-subgroup $P$ of $G$, we denote by $A^P$ the $N_G(P)$-subalgebra of $P$-fixed points of $A$.  For any two $p$-subgroups $Q\leq P$ of $G$, the {\it relative trace map} ${\rm Tr}_Q^P:A^Q\to A^P$ is defined by ${\rm Tr}_Q^P(a)=\sum_{x\in [P/Q]}{}^xa$, where $[P/Q]$ denotes a set of representatives of the left cosets of $Q$ in $P$. We denote by $A(P)$ the {\it $P$-Brauer quotient} of $A$, i.e., the $N_G(P)$-algebra
\[A^P/\sum_{Q<P}{\rm Tr}_Q^P(A^Q).\]
We denote by ${\rm br}_P^A:A^P\to A(P)$ the canonical map, which is called the {\it $P$-Brauer homomorphism}. 
If $f:A\to B$ is a homomorphism of $G$-algebras, then $f$ restricts to a homomorphism of $N_G(P)$-algebras $f^P:A^P\to B^P$, which in turn induces a homomorphism of $N_G(P)$-algebras
$f(P): A(P)\to B(P)$
sending $\br_P^A(a)$ to $\br_P^B(f(a))$ for any $a\in A^P$. In other words, we have $f(P)\circ \br_P^A=\br_P^B\circ f^P$. In this way, the $P$-Brauer construction defines a functor (called the {\it $P$-Brauer functor}) from the category of $G$-algebras to the category of $N_G(P)$-algebras. 

\begin{lemma}[cf. e.g. {\cite[Exercise 11.4 or Proposition 27.6 (a)]{Thevenaz}}]\label{lemma:basis}
	Let $G$ be a finite group and $P$ a $p$-subgroup of $G$. Let $A$ be a $G$-algebra over $k$. If $A$ has a $P$-stable $k$-basis $X$, then $\{\br_P^A(a)\mid a\in X^P\}$ is a $k$-basis of $A(P)$, where $X^P:=\{x\in X\mid {}^ux=x,~\forall~u\in P\}$.
\end{lemma}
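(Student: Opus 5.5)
The plan is the standard argument that the Brauer quotient of a permutation algebra is spanned by the images of the fixed basis elements. Let $X^P$ be as in the statement. Since $P$ permutes $X$, the set $X\setminus X^P$ is a disjoint union of $P$-orbits of length divisible by $p$. I will prove two facts:
\begin{enumerate}
\item[(a)] $A^P$ has $k$-basis consisting of the orbit sums $\sum_{y\in \mathcal{O}}y$, one for each $P$-orbit $\mathcal{O}\subseteq X$;
\item[(b)] $\sum_{Q<P}{\rm Tr}_Q^P(A^Q)$ is exactly the $k$-span of the orbit sums of the nontrivial orbits, that is, of the orbits not contained in $X^P$.
\end{enumerate}
Together these say that $A^P$ splits as the span of $X^P$ plus the span of the nontrivial orbit sums, and that the second summand is the kernel of $\br_P^A$. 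Hence $\br_P^A$ maps the span of $X^P$ isomorphically onto $A(P)$, which is the claim.

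For (a), take $a=\sum_{x\in X}\lambda_x x\in A$. Since $X$ is a basis permuted by $P$, the element $a$ is $P$-fixed if and only if $\lambda_{ux}=\lambda_x$ for all $u\in P$ and $x\in X$. This holds if and only if the coefficients are constant on $P$-orbits, which gives the orbit-sum basis of $A^P$.

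For (b), there are two inclusions.
\begin{enumerate}
\item[] First inclusion. Let $\mathcal{O}=Px$ be a nontrivial orbit, and put $Q={\rm Stab}_P(x)<P$. Then $x\in A^Q$ and ${\rm Tr}_Q^P(x)=\sum_{u\in[P/Q]}{}^ux=\sum_{y\in\mathcal{O}}y$. So every nontrivial orbit sum lies in the trace image.
\item[] Reverse inclusion. Let $Q<P$ and $a\in A^Q$. By (a) applied to $Q$, write $a$ as a combination of $Q$-orbit sums $\sum_{y\in Qx}y$. It suffices to treat one such sum. Let $R={\rm Stab}_P(x)$. Then ${\rm Tr}_Q^P\bigl(\sum_{y\in Qx}y\bigr)$ equals the $P$-orbit sum of $x$ times the scalar $|Q:Q\cap R|\cdot|P:Q|/|P:R|$. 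This scalar equals $|P:Q|\,|Q\cap R|/|R|=|PR:\ldots|$; more cleanly, it is the index $|R:Q\cap R|$. If $x\in X^P$, then $R=P$, so this index is $|P:Q|$, which is divisible by $p$ because $Q<P$ are $p$-groups. The image is therefore $0$ in characteristic $p$. Otherwise the image is a multiple of a nontrivial orbit sum.
\end{enumerate}

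The only delicate step is this coefficient count in the reverse inclusion. To verify it I will count directly: the coset sum $\sum_{u\in[P/Q]}u\cdot(Qx)$ covers each element of $Px$ exactly $|P:Q|\,|Qx|/|Px|$ times. For $x$ fixed by $P$, the set $Qx=\{x\}$ is a single point and $|Px|=1$, so the multiplicity is $|P:Q|\equiv 0 \pmod p$. This is all that is needed, so the precise value of the scalar in the non-fixed case is irrelevant.
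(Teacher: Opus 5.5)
Your proof is correct. It is the standard orbit-sum argument behind the result the paper cites from Th\'evenaz; the paper itself gives no proof. You show that the $P$-orbit sums form a basis of $A^P$, that the relative traces from proper subgroups span exactly the nontrivial orbit sums, and that ${\rm Tr}_Q^P(x)=|P:Q|\,x=0$ for $x\in X^P$.

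There is one slip. The intermediate expression $|P:Q|\,|Q\cap R|/|R|$ should read $|R|/|Q\cap R|$. This is harmless, since you immediately state the correct value $|R:Q\cap R|$ and rightly note that only the case $R=P$ is needed.
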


\begin{void}\label{void:central extension}
	{\rm Let $G$ be a finite group, $P$ a $p$-subgroup of $G$ and $\alpha\in Z^2(G;k^\times)$. Then both $kG$ and $k_\alpha G$ are $G$-algebras via conjugation action of $G$. Since $kG$ has an obvious $P$-stable $k$-basis $G$, then by Lemma \ref{lemma:basis}, we have $(kG)(P)\cong kC_G(P)$. But it is not very obvious that $k_\alpha G$ has a $P$-stable $k$-basis. To obtain $(k_\alpha G)(P)\cong k_\alpha C_G(P)$, we can not directly use Lemma \ref{lemma:basis}. Since $G$ is finite and since any finite subset of $k=\bF_p$ is contained in a finite subfield of $k$, we may assume that $\alpha\in Z^2(G;R^\times)$ for a finite subfield $R$ of $k$. Let $\hat{G}$ 
		be the extension of $G$ by $R^\times$ associated with $\alpha$; see Notation \ref{notation: extension of category}. Recall from Notation \ref{notation: extension of category} and Proposition \ref{prop: isomorphism between khatC and kalphaC} that $\hat{G}$ has the following properties: 
		\begin{enumerate}[{\rm (i)}]
			\item $\hat{G}=\{(g,r)\mid g\in G,~r\in R^\times\}$. Denote by $\tau$ the canonical surjection $\hat{G}\to G$ sending $(g,r)$ to $g$ for any $g\in G$ and $r\in R^\times$. Then $\ker(\tau)=\{(1,r)\mid r\in R^\times\}\cong R^\times$ is a central subgroup of $\hat{G}$ of order not divisible by $p$.
			\item Write $e_R:=\frac{1}{|R^\times|}\sum_{r\in R^\times}r^{-1} (1,\alpha(1,1)^{-1}r)$. Then $e_R$ is an idempotent in $Z(k\hat{G})$ and we have a $k$-algebra isomorphism $\pi:k\hat{G}e_R\to k_\alpha G$ sending $(g,r)\in \hat{G}$ to $rg\in k_\alpha G$.
		\end{enumerate}
		Since $P$ is a $p$-subgroup of $G$ and since $p\nmid |\ker(\tau)|$, there is a $p$-subgroup $\hat{P}$ of $\hat{G}$ such that $\hat{P}\cong P$ via $\tau$. Denote by $\hat{u}$ the preimage of $u\in P$ in $\hat{P}$. Then $\hat{P}=\{\hat{u}\mid u\in P\}$.	
	}
\end{void}

\begin{proposition}\label{prop:isomorphic as G-algebras}
	Keep the notation of \ref{void:central extension}.  For any $t\in k\hat{G}$ and $g\in G$, set ${}^gt=(g,1)t(g,1)^{-1}$; clearly ${}^gt=(g,r)t(g,r)^{-1}$ for any $r\in R^\times$. This defines an action of $G$ on $k\hat{G}$, and the isomorphism $\pi:k\hat{G}e_R\cong k_\alpha G$ in \ref{void:central extension} is an isomorphism of $G$-algebras. 
\end{proposition}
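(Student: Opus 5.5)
The proof splits into three checks: that the formula defines a $G$-action on $k\hat{G}$, that $e_R$ is $G$-fixed (so $k\hat{G}e_R$ is a $G$-algebra), and that $\pi$ is $G$-equivariant.

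\textbf{The action.} Conjugation by any element of $\hat{G}$ is a $k$-algebra automorphism of $k\hat{G}$. Since $\ker(\tau)$ is central, conjugation by $(g,r)$ depends only on $g=\tau((g,r))$. This justifies the claim ${}^gt=(g,r)t(g,r)^{-1}$ for every $r\in R^\times$. It also shows that $g\mapsto$ (conjugation by $(g,1)$) is the composite of the group homomorphism $\hat{G}\to\Aut(k\hat{G})$, $\hat{x}\mapsto$ conjugation by $\hat{x}$, with any set-theoretic section of $\tau$. Because this homomorphism kills $\ker(\tau)$, it factors through $\tau$, and the composite is a homomorphism $G\to\Aut(k\hat{G})$. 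Concretely, $(g,1)(h,1)=(gh,\alpha(g,h))$, which differs from $(gh,1)$ by a central element, so ${}^g({}^ht)={}^{gh}t$.

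\textbf{The idempotent $e_R$.} The element $e_R$ lies in $Z(k\hat{G})$, since it is a combination of elements of the central subgroup $\ker(\tau)$. Hence ${}^ge_R=e_R$. It follows that $k\hat{G}e_R$ is $G$-stable and is a $G$-algebra with unit $e_R$.

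\textbf{Equivariance of $\pi$.} The $G$-action on $k_\alpha G$ is conjugation by the basis element $g$, which is invertible in $k_\alpha G$. So it suffices to show
\[
\pi({}^g t)=g\,\pi(t)\,g^{-1}\quad\text{for } t\in k\hat{G}e_R .
\]
The map $\pi$ is the restriction to $k\hat{G}e_R$ of the algebra homomorphism $\pi:k\hat{G}\to k_\alpha G$ of Proposition~\ref{prop: isomorphism between khatC and kalphaC}(i), which sends $(g,r)\mapsto rg$ and satisfies $\pi(e_R)=1$. Since $(g,1)$ is invertible in $k\hat{G}$ and $\pi$ is multiplicative, $\pi((g,1)^{-1})$ is the inverse of $\pi((g,1))=g$. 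Therefore
\[
\pi({}^gt)=\pi((g,1))\,\pi(t)\,\pi((g,1)^{-1})=g\,\pi(t)\,g^{-1}.
\]
We must still check that $\pi$ is unital on the relevant pieces. Write ${}^gt=(g,1)t(g,1)^{-1}$ with $t=te_R$; this already lies in $k\hat{G}e_R$, and the computation above takes place in $k\hat{G}$ under the homomorphism $\pi$.

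\textbf{Main obstacle.} The only subtle point is whether $\pi$ is a genuine algebra homomorphism on all of $k\hat{G}$, so that inverses are preserved. This holds by Proposition~\ref{prop: isomorphism between khatC and kalphaC}(i). One can also verify $\pi((g,1)^{-1})=g^{-1}$ in $k_\alpha G$ directly: $(g,1)^{-1}=(g^{-1},c)$ for the scalar $c$ determined by the cocycle, and the product computes to $\alpha(1,1)^{-1}\cdot 1$, which is the unit of $k_\alpha G$ by Proposition~\ref{prop: unit element}. Either way, the conclusion is that $\pi:k\hat{G}e_R\cong k_\alpha G$ is an isomorphism of $G$-algebras.
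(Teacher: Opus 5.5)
Your proposal is correct and follows the paper's approach: the paper's proof says only that the statement ``can be checked straightforward by definition,'' and your three checks are that verification written out, namely centrality of $\ker(\tau)$ (so the action is well defined), centrality of $e_R$, and equivariance of $\pi$ from multiplicativity together with $\pi(1_{k\hat{G}})=\alpha(1,1)^{-1}\cdot 1=1_{k_\alpha G}$. Your slightly muddled remark about $\pi$ being ``unital on the relevant pieces'' is fully settled by your direct computation of $\pi((g,1)^{-1})$ in the last paragraph.
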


\begin{proof}
	This can be checked straightforward by definition.
\end{proof}

\begin{proposition}\label{prop:restrictions of twisted group algebras}
	Keep the notation of \ref{void:central extension}. The following hold:
	\begin{enumerate}[{\rm (i)}]
		\item $\tau^{-1}(C_G(P))=C_{\hat{G}}(\hat{P})$ and $\tau^{-1}(N_G(P))=N_{\hat{G}}(\hat{P})$.
		\item The $G$-algebra isomorphism $\pi:k\hat{G}e_R\cong k_\alpha G$ restricts to isomorphisms of $N_G(P)$-algebras $$kC_{\hat{G}}(\hat{P})e_R\cong k_\alpha C_G(P)$$ and $$kN_{\hat{G}}(\hat{P})e_R\cong k_\alpha N_G(P),$$ where the restrictions of $\alpha$ to subgroups of $G$ are abusively again denoted by $\alpha$.
		\item The $N_G(P)$-algebra isomorphism $kN_{\hat{G}}(\hat{P})e_R\cong k_\alpha N_G(P)$ induces an $N_G(P)$-algebra isomorphism
		$$kN_{\hat{G}}(\hat{P})/\hat{P}\bar{e}_R\cong k_\alpha N_G(P)/P,$$
		where $\bar{e}_R$ is the canonical image of $e_R$ in $N_{\hat{G}}(\hat{P})/\hat{P}$.
		\item If $\alpha\in Z^2(\C;\F_p^\times)$, then all isomorphisms in (ii) and (iii) are defined over $\F_p$.
	\end{enumerate}
\end{proposition}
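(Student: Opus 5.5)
The plan is to work entirely inside $\hat{G}$, where the group structure is honest, and then transport everything to the twisted side along $\pi$.

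\textbf{Part (i).} Let $\hat{x}=(x,r)\in\hat{G}$. Because $\ker(\tau)$ is central, conjugation by $\hat{x}$ maps $\hat{P}$ onto a subgroup $\hat{x}\hat{P}\hat{x}^{-1}$ with $\tau(\hat{x}\hat{P}\hat{x}^{-1})={}^xP$. This conjugate is again a $p$-subgroup. Moreover, $\hat{P}$ is the unique $p$-subgroup of $\tau^{-1}(P)=\hat{P}\times\ker(\tau)$, since it is the Sylow $p$-subgroup of a direct product with a $p'$-group.

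For the normaliser: if $x\in N_G(P)$, then $\hat{x}\hat{P}\hat{x}^{-1}$ is a $p$-subgroup of $\tau^{-1}(P)$, hence equals $\hat{P}$. The converse inclusion is immediate by applying $\tau$.

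For the centraliser: if $x\in C_G(P)$ and $u\in P$, then $\hat{x}\hat{u}\hat{x}^{-1}$ lies in $\hat{P}$ (by the normaliser case) and maps to $u$ under $\tau$. Since $\tau|_{\hat{P}}$ is injective, it equals $\hat{u}$. Conversely, $C_{\hat{G}}(\hat{P})$ maps into $C_G(P)$.

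\textbf{Part (ii).} By Proposition \ref{prop:isomorphic as G-algebras}, $\pi$ is a $G$-algebra isomorphism sending $(g,r)e_R\mapsto rg$. Restricting to the subgroup algebra $kH$ for $H=\tau^{-1}(C_G(P))$ or $H=\tau^{-1}(N_G(P))$ gives a map $kHe_R\to k_\alpha\tau(H)$. Note that $e_R\in kH$ because $\ker(\tau)\subseteq H$. This map is surjective, since $(g,1)e_R\mapsto g$. It is injective because the argument in Proposition \ref{prop: isomorphism between khatC and kalphaC} applies verbatim to the extension of $\tau(H)$ by $R^\times$, which is exactly $H$: the elements $(g,1)e_R$ with $g\in\tau(H)$ form a basis. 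Compatibility with the $N_G(P)$-action holds because that action is conjugation by $(g,1)$ with $g\in N_G(P)$, which normalises $H$ by part (i).

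\textbf{Part (iii).} This is the step needing the most care: it uses the cocycle being trivial on $P$, as in Proposition \ref{propositon:NGe(eP)/eP}. The hypothesis coming from $\O_\C$ is that $\alpha(x,y)=1$ whenever one of $x,y$ lies in $P$. Under this hypothesis $\hat{P}=\{(u,1)\mid u\in P\}$, because $(u,1)(v,1)=(uv,\alpha(u,v))=(uv,1)$.

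The quotient $kN_{\hat{G}}(\hat{P})\to kN_{\hat{G}}(\hat{P})/\hat{P}$ has kernel generated by the elements $\hat{u}-1$. On the twisted side, the quotient $k_\alpha N_G(P)\to k_\alpha N_G(P)/P$ is well defined, since $\alpha$ factors through $N_G(P)/P$. Its kernel is the ideal generated by the $u-1$, where $1=\alpha(1,1)^{-1}\cdot 1_G$; under the normalisation this is just $1_G$. Since $\pi(\hat{u}e_R)=u$, the isomorphism of (ii) matches these kernels. Cutting by $e_R$ and its image $\bar{e}_R$, this yields the isomorphism $kN_{\hat{G}}(\hat{P})/\hat{P}\,\bar{e}_R\cong k_\alpha N_G(P)/P$. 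The remaining check, that $kN_{\hat{G}}(\hat{P})/\hat{P}\,\bar{e}_R$ identifies with $(kN_{\hat{G}}(\hat{P})e_R)$ modulo the ideal generated by $\hat{P}-1$, holds because $e_R$ is central.

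If one does not want to assume $\alpha$ is normalised on $P$, the alternative is to adjust $\alpha$ by a coboundary first. The statement as phrased, with $\alpha$ a restriction from $\O_\C$, is what makes this unnecessary.

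\textbf{Part (iv).} When $\alpha$ takes values in $\F_p^\times$ we may take $R=\F_p$. Then $e_R$ and all the maps above have coefficients in $\F_p$: they send basis elements to $\F_p^\times$-multiples of basis elements. Hence each isomorphism is of the form $\mathrm{Id}_k\otimes(\text{an }\F_p\text{-isomorphism})$.
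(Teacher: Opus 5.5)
Your proposal is correct and follows essentially the paper's route. For (i), the paper argues elementwise that $a\hat{u}a^{-1}=\hat{v}z$ with $z\in\ker(\tau)$ and that being a $p$-element forces $z=1$; this is the same coprimality fact you package as uniqueness of the Sylow $p$-subgroup $\hat{P}$ of $\tau^{-1}(P)=\hat{P}\times\ker(\tau)$. The paper dismisses (ii)--(iv) as following from (i) and the explicit form of $\pi$, and your proposal supplies the details it leaves implicit: the normalisation $\alpha(x,y)=1$ whenever $x$ or $y$ lies in $P$ in (iii), which is the same condition the paper invokes in Proposition \ref{propositon:NGe(eP)/eP}, and the choice $R=\F_p$ in (iv).
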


\begin{proof}
	Clearly we have $C_{\hat{G}}(\hat{P})\subseteq \tau^{-1}(C_G(P))$ and $N_{\hat{G}}(\hat{P})\subseteq \tau^{-1}(N_G(P))$. For any $a\in \tau^{-1}(C_G(P))$, since $\tau(a)\in C_G(P)$, we have $a\hat{u}a^{-1}\hat{u}^{-1}\in \ker(\tau)$ for all $u\in P$. It follows that $a\hat{u}a^{-1}=\hat{u}z$ for some $z\in \ker(\tau)$. Since $a\hat{u}a^{-1}$ is a $p$-element of $\hat{G}$, $\hat{u}z$ should also be a $p$-element of $\hat{G}$. This forces $z=1$ and hence $a\in C_{\hat{G}}(\hat{P})$.  Let $a\in \tau^{-1}(N_G(P))$.  Since $\tau(a)\in N_G(P)$, for any $u\in P$ there exists $v\in P$ such that $\tau(a)u\tau(a)^{-1}=v$. Equivalently, we have $\tau(a)\tau(\hat{u})\tau(a^{-1})=\tau(\hat{v})$. This implies that $a\hat{u}a^{-1}=\hat{v}z$ for some $z\in \ker(\tau)$. Since $a\hat{u}a^{-1}$ is a $p$-element of $\hat{G}$, $\hat{v}z$ should also be a $p$-element of $\hat{G}$. This again forces $z=1$ and hence $a\in N_{\hat{G}}(\hat{P})$, completing the proof of (i). Statements (ii), (iii) and (iv) follows from (i) and the explicit construction of $\pi$ in \ref{void:central extension} (iii). 
\end{proof}

Let $A$ be a finite-dimensional $k$-algebra. By a {\it block} of $A$, we mean a primitive central idempotent of $A$. The $k$-algebra $Ab=bAb$ is called a block algebra of $A$. For any indecomposable $A$-module $U$, there is a unique block $b$ of $A$ such that $bU\neq 0$, and hence $U$ is an $Ab$-module.  

\begin{proposition}\label{prop:Brauer map and central idempotents}
	Let $G$ be a finite group, $P$ a $p$-subgroup of $G$ and $\alpha\in Z^2(G;k^\times)$. The following hold: 
	\begin{enumerate}[{\rm (i)}]
		\item $k_\alpha C_G(P)\subseteq (k_\alpha G)^P$. 
		\item The composition $k_\alpha C_G(P)\hookrightarrow (k_\alpha G)^P\xrightarrow{\br_P^{k_\alpha G}} (k_\alpha G)(P)$ is an isomorphism of $N_G(P)$-algebras. 
		\item We abusively use the same symbol $\br_P^{k_\alpha G}$ to denote the following composition of $N_G(P)$-algebra homomorphisms 
		$$(k_\alpha G)^P\to (k_\alpha G)(P)\cong k_\alpha C_G(P)\hookrightarrow k_\alpha N_G(P),$$ where the isomorphism $(k_\alpha G)(P)\cong k_\alpha C_G(P)$ is the inverse of the isomorphism obtained in (ii). Then $\br_P^{k_\alpha G}$ restricts to a unitary $k$-algebra homomorphism from $Z(k_\alpha G)$ to $Z(k_\alpha N_G(P))$.
		\item  For any block idempotent $c$ of $k_\alpha N_G(P)$ and any central idempotent $b$ of $k_\alpha G$, exactly one of $\br_P^{k_\alpha G}(b)c$, $\br_P^{k_\alpha G}(1-b)c$ is nonzero. In particular, there exists a unique block idempotent of $k_\alpha G$ such that $\br_P^{k_\alpha G}(b)c\neq 0$.
	\end{enumerate}
	
\end{proposition}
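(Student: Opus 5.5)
The plan is to transfer everything to the untwisted central extension $\hat{G}$ of \ref{void:central extension}, where the classical results are available, and then come back through the isomorphism $\pi:k\hat{G}e_R\cong k_\alpha G$. We may assume $\alpha\in Z^2(G;R^\times)$ for a finite subfield $R$ of $k$. The conjugation action of $P$ on $k\hat{G}$ is realised by $\hat{P}$, since ${}^u t=\hat{u}t\hat{u}^{-1}$: the element $\hat{u}$ differs from $(u,1)$ by a central element of $\ker(\tau)$. By Proposition \ref{prop:isomorphic as G-algebras}, $\pi$ is an isomorphism of $G$-algebras. Hence, by functoriality of the Brauer construction (and because $e_R$ is central and $P$-fixed, so that $(k\hat{G})(P)=(k\hat{G}e_R)(P)\oplus(k\hat{G}(1-e_R))(P)$), it induces an $N_G(P)$-algebra isomorphism $(k\hat{G}e_R)(P)\cong(k_\alpha G)(P)$.

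For (i): each $x\in C_G(P)$ satisfies $uxu^{-1}=x$ in $G$. Under the isomorphism $\pi$, the element $x\in k_\alpha G$ corresponds to $(x,1)e_R$, and $\hat{u}(x,1)\hat{u}^{-1}$ lies in $(x,1)\ker(\tau)$. By Proposition \ref{prop:restrictions of twisted group algebras} (i) we have $\tau^{-1}(C_G(P))=C_{\hat{G}}(\hat{P})$, so $(x,1)$ is centralised by $\hat{P}$. Therefore $x$ is $P$-fixed in $k_\alpha G$.

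For (ii): the group $\hat{G}$ is a $\hat{P}$-stable basis of $k\hat{G}$, and its fixed points are $C_{\hat{G}}(\hat{P})$. Lemma \ref{lemma:basis} then gives $kC_{\hat{G}}(\hat{P})\cong (k\hat{G})(P)$ via $\br_P$. Multiplying by the idempotent $e_R$, which lies in $kC_{\hat{G}}(\hat{P})$ and is central, we get $kC_{\hat{G}}(\hat{P})e_R\cong(k\hat{G}e_R)(P)$. Composing with Proposition \ref{prop:restrictions of twisted group algebras} (ii) and with the isomorphism $\pi(P)$ yields (ii). The composite is $N_G(P)$-equivariant because every map involved is.

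For (iii): the map $\br_P$ is an algebra homomorphism on $(k_\alpha G)^P$, and $Z(k_\alpha G)\subseteq (k_\alpha G)^P$. The image of a central element $z$ commutes with $\br_P((k_\alpha G)^P)$, which is all of $k_\alpha C_G(P)$. It is also $N_G(P)$-fixed, because $z$ is $G$-fixed and $\br_P$ is $N_G(P)$-equivariant. An element of $k_\alpha C_G(P)$ that is fixed under conjugation by $N_G(P)$ commutes with every basis element $y\in N_G(P)$ of $k_\alpha N_G(P)$, since conjugation by $y$ in $k_\alpha N_G(P)$ is exactly the $N_G(P)$-action. So the image of $z$ is central in $k_\alpha N_G(P)$. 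Unitarity follows from $\br_P(1)=1$.

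For (iv): by (iii), $\br_P(b)$ and $\br_P(1-b)$ are orthogonal central idempotents of $k_\alpha N_G(P)$ that sum to $1$. Since $c$ is primitive central, $c=\br_P(b)c+\br_P(1-b)c$ is a decomposition into orthogonal central idempotents, so exactly one of the two terms is nonzero. For uniqueness, write $1=\sum_b b$ over the blocks of $k_\alpha G$. Then $c=\sum_b\br_P(b)c$ is an orthogonal sum of central idempotents, and primitivity of $c$ forces exactly one summand to be nonzero.

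The main obstacle is (ii), specifically the compatibility between the Brauer construction taken for the $P$-action on $k_\alpha G$ and the $\hat{P}$-action on $k\hat{G}$, which is needed to apply Lemma \ref{lemma:basis}. This is settled by the fact that conjugation by $\hat{u}$ and by $(u,1)$ agree, together with the splitting of $(k\hat{G})(P)$ along $e_R$.
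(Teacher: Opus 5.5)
Your proposal is correct and follows essentially the same route as the paper. In both, you pass to the central extension $\hat{G}$, use that $\pi:k\hat{G}e_R\cong k_\alpha G$ is an isomorphism of $G$-algebras, and combine Lemma~\ref{lemma:basis} with functoriality of the Brauer construction to get (i) and (ii); (iii) then comes from $N_G(P)$-equivariance and (iv) from (iii). Your splitting $(k\hat{G})(P)=(k\hat{G}e_R)(P)\oplus(k\hat{G}(1-e_R))(P)$ simply makes explicit the step where the paper applies Lemma~\ref{lemma:basis} directly to $k\hat{G}e_R$.
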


\begin{proof}
	We are in the context of \ref{void:central extension}, so we can use the notation there. 
	
	(i) By Proposition \ref{prop:restrictions of twisted group algebras}, we have the following commutative diagram of $N_G(P)$-algebras:
	$$\xymatrix{kC_{\hat{G}}(\hat{P})e_R \ar[rr]^{\pi}_{\cong}  \ar@{_{(}->}[d] &  &  k_\alpha C_G(P)\ar@{^{(}->}[d]\\
		k\hat{G}e_R \ar[rr]^{\pi}_{\cong}  &   &  k_\alpha G
	}$$
	Since $kC_{\hat{G}}(\hat{P})e_R\subseteq (k\hat{G}e_R)^P$, by the above commutative diagram we have $k_\alpha C_G(P)\subseteq (k_\alpha G)^P$, whence (i).

	(ii) By Lemma \ref{lemma:basis}, we see that the composition $k C_{\hat{G}}(\hat{P})e_R\hookrightarrow (k\hat{G}e_R)^P\xrightarrow{\br_P^{k\hat{G}}} (k\hat{G}e_R)(P)$ is an isomorphism of $N_G(P)$-algebras. Now statement (ii) follows from the following commutative diagram:
		$$\xymatrix{kC_{\hat{G}}(\hat{P})e_R \ar[rr]^{\pi}_{\cong}  \ar@{_{(}->}[d] &  &  k_\alpha C_G(P)\ar@{^{(}->}[d]\\
		(k\hat{G}e_R)^P \ar[rr]^{\pi}_{\cong}\ar[d]_{\br_P^{k\hat{G}}}  &   &  (k_\alpha G)^P\ar[d]^{\br_P^{k_\alpha G}} \\
			(k\hat{G}e_R)(P) \ar[rr]^{\pi(P)}_{\cong}  &   &  (k_\alpha G)(P)
	}$$
	
	(iii) By definition, it is easy to see that the homomorphism $\br_P^{k_\alpha G}: (k_\alpha G)^P\to k_\alpha N_G(P)$ is unitary. Since $\br_P^{k_\alpha G}: (k_\alpha G)^P\to k_\alpha N_G(P)$ is an $N_G(P)$-algebra homomorphism, it maps $N_G(P)$-fixed points to $N_G(P)$-fixed points, and hence maps $Z(k_\alpha G)=(k_\alpha G)^G$ to $Z(k_\alpha N_G(P))=(k_\alpha N_G(P))^{N_G(P)}$. 
	
	(iv) follows easily from (iii).
\end{proof}

\begin{definition}\label{defi:associated to a block}
	{\rm Let $G$ be a finite group, $P$ a $p$-subgroup of $G$ and $\alpha\in Z^2(G;k^\times)$. Let $b$ be a central idempotent of $k_\alpha G$, $c$ a block of $k_\alpha N_G(P)$ and $U$ an indecomposable $k_\alpha N_G(P)$-module. We say that $c$ (resp. $U$) {\it is associated to} $b$ if $\br_P^{k_\alpha G}(b)c\neq 0$ (resp. $\br_P^{k_\alpha G}(b)U\neq 0$).  By Proposition \ref{prop:Brauer map and central idempotents} (iv), there is a unique block of $k_\alpha G$ to which $c$ (resp. $U$) is associated. Assume that $\alpha\in Z^2(G;\F_p^\times)$. Let $\sigma$ be an element of $\Gamma=\Gal(k/\F_p)$. In this case the Brauer map $\br_P^{k_\alpha G}$ is defined over $\F_p$ (because $\br_P^{k_\alpha G}={\rm Id}_k\otimes\br_P^{(\F_p)_\alpha G}$), hence commuting with $\sigma$. Since $\sigma$ commutes with the Brauer map $\br_P^{k_\alpha G}$, we see that $\sigma(c)$ (resp. ${}^\sigma U$) is associated to $\sigma(b)$ if and only if $c$ (resp. $U$) is associated to $b$. Hence the group $\Gamma_b$ acts on the set of isomorphism classes of indecomposable $k_\alpha N_G(P)$-modules associated to $b$.
	}
\end{definition}

\section{EI-categories $\C$ and partition of weights of $k_\alpha\O_\C$ by blocks of $k_\alpha\C$}\label{s4:Partition of weights}

The following proposition is the reason why we can give a partition of weights of $k_\alpha\O_\C$ by blocks of $k_\alpha\C$ for an EI-category $\C$.

\begin{proposition}\label{prop:eb is in Ge}
	Let $R$ be a commutative ring, $\C$ a finite category, $\alpha\in Z^2(\C;R^\times)$ and $b$ a central idempotent of $R_\alpha\C$. Let $e$ be an idempotent endomorphism of an object $X$ in $\C$ and let $\hat{e}=\alpha(e,e)^{-1}e$. Then $\hat{e}b$ is a central idempotent in the subalgebra $R_\alpha (e\circ\End_\C(X)\circ e)$ of $R_\alpha\C$. In particular, if $\C$ is an EI-category, then $e={\rm Id}_X$ and $\hat{e}b$ is a central idempotent of $R_\alpha G_e$.
\end{proposition}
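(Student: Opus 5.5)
Since $b$ is central in $R_\alpha\C$, it commutes with $\hat e$, and $\hat e$ is an idempotent by the discussion preceding Proposition \ref{prop: unit element}. Hence $\hat e b=\hat e b\hat e=b\hat e$ is an idempotent, because $(\hat e b)^2=\hat e^2 b^2=\hat e b$. The plan is to show that $\hat e b$ lies in the subalgebra $\hat e R_\alpha\C\hat e=eR_\alpha\C e$ and is central there, and then to identify $eR_\alpha\C e$ with $R_\alpha(e\circ\End_\C(X)\circ e)$.

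\textbf{Step 1: identifying the corner algebra.} I would first show $eR_\alpha\C e=R_\alpha(e\circ\End_\C(X)\circ e)$ as subspaces of $R_\alpha\C$. For a morphism $s$, the product $e s e$ is nonzero only if $s\in\End_\C(X)$. In that case $ese=\alpha(e,s)\alpha(e\circ s,e)\,e\circ s\circ e$, which is a unit multiple of the basis element $e\circ s\circ e$. Conversely, any $m=e\circ s\circ e$ satisfies $m=e\circ m\circ e$, so $\hat e m\hat e$ is a unit multiple of $m$. Therefore $eR_\alpha\C e$ is the $R$-span of $e\circ\End_\C(X)\circ e$. This span is closed under the twisted multiplication, so it is the twisted monoid algebra of that monoid with the restricted cocycle. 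Its unit element is $\hat e$, by the same computation as in Proposition \ref{prop: unit element}.

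\textbf{Step 2: centrality.} Write $\hat e b=\hat e b\hat e\in\hat eR_\alpha\C\hat e$. For any $y=\hat e y\hat e$ in this corner we have $\hat e b\,y=\hat e\,by\,\hat e=\hat e\,yb\,\hat e=y\hat e b$, where the middle equality uses that $b$ is central. So $\hat e b$ is central in $R_\alpha(e\circ\End_\C(X)\circ e)$.

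\textbf{Step 3: the EI case.} If $\C$ is an EI-category, every endomorphism is an isomorphism, so an idempotent $e$ satisfies $e=e^{-1}\circ e\circ e=\Id_X$. Then $e\circ\End_\C(X)\circ e=\End_\C(X)=G_e$, and the claim follows from Step 2.

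The only mildly delicate point is the bookkeeping of cocycle scalars in Step 1. There one has to check that $\hat e$ really acts as the identity on $R_\alpha(e\circ\End_\C(X)\circ e)$. This follows from the 2-cocycle identity applied to $(e,e,m)$ and $(m,e,e)$, exactly as in the proof of Proposition \ref{prop: unit element}.
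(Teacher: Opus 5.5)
Your proposal is correct and follows essentially the same route as the paper. Both arguments write $\hat e b=\hat e b\hat e$, note that this lies in the $R$-span of $e\circ\End_\C(X)\circ e$ (a subalgebra with unit $\hat e$), and then get centrality from the centrality of $b$. You additionally spell out the cocycle bookkeeping (that $\hat e$ is the unit of $R_\alpha(e\circ\End_\C(X)\circ e)$) and the explicit centrality check, both of which the paper leaves implicit.
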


\begin{proof}
	Since $\hat{e}$ is the unit element of $R_\alpha (e\circ\End_\C(X)\circ e)$, it suffices to show that $\hat{e}b$ is contained in the subalgebra $R_\alpha(e\circ\End_\C(X)\circ e)$. Since $\hat{e}$ is an idempotent in $R_\alpha\C$ and since $b\in Z(R_\alpha\C)$, we have $\hat{e}b=\hat{e}^2b=\hat{e}b\hat{e}$.	Since $\hat{e}$ is an $R$-linear combination of elements of $e\circ\End_\C(X)\circ e$, by the definition of the multiplication in $R_\alpha\C$, we see that $\hat{e}b\hat{e}$ is an $R$-linear combination of elements of $e\circ\End_\C(X)\circ e$. Hence $\hat{e}b\in R_\alpha(e\circ\End_\C(X)\circ e)$, proving the first statement. If $\C$ is an EI-category, then ${\rm Id}_X$ is the unique idempotent in $\End_\C(X)$ and we have $e\circ\End_\C(X)\circ e=G_{{\rm Id}_X}$. This completes the proof.
\end{proof}

\begin{proposition}\label{proposition:varphi(hat(e)b)=hat(f)b}
		Let $\C$ a finite EI-category, $\alpha\in Z^2(\C;k^\times)$ and $b$ a central idempotent of $k_\alpha\C$. Let $X$ and $Y$ be objects in $\C$. Set $e=\Id_X$, $f=\Id_Y$, $\hat{e}=\alpha(e,e)^{-1}e$ and $\hat{f}=\alpha(f,f)^{-1}f$. Assume that $e$ is isomorphic to $f$. Then there exist $s\in f\circ \Hom_\C(X,Y)\circ e$ and $t\in e\circ \Hom_\C(Y,X)\circ f$ satisfying $t\circ s=e$ and $s\circ t=f$. Let $\varphi$ be the $k$-algebra isomorphism $k_\alpha G_e\cong k_\alpha G_f$ described in Proposition \ref{prop:isomorphic idempotents} (i). Then $\varphi(\hat{e}b)=\hat{f}b$.	
\end{proposition}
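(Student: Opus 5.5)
The plan is to express $\varphi$ as a two-sided multiplication inside the ambient algebra $k_\alpha\C$: up to a fixed scalar, $\varphi(a)=s\,a\,t$. Once this holds, centrality of $b$ in $k_\alpha\C$ lets $b$ pass across $s$. The identity is then reduced to computing the single product $st$ in $k_\alpha\C$. First, by Proposition \ref{prop:eb is in Ge}, $\hat{e}b$ lies in $k_\alpha G_e$ (as $\C$ is EI, $G_e=\End_\C(X)$), so $\varphi(\hat{e}b)$ makes sense.

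\emph{Step 1.} For $x\in G_e$, compute the product of the basis elements $s,x,t$ in $k_\alpha\C$. By the definition of the multiplication, $s\,x\,t=\alpha(x,t)\alpha(s,x\circ t)\, s\circ x\circ t$. Comparing with the formula for $\beta(x)$ in Proposition \ref{prop:isomorphic idempotents} (i) gives
$$\varphi(x)=c\, s\,x\,t,\qquad c:=\alpha(e,e)^{-1}\alpha(t,s)^{-1}.$$
The scalar $c$ is independent of $x$, so by linearity $\varphi(a)=c\,s\,a\,t$ for all $a\in k_\alpha G_e$, with products taken in $k_\alpha\C$.

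\emph{Step 2.} Apply this with $a=\hat{e}b$. First check that $s\hat{e}=s$ in $k_\alpha\C$. We have $s\hat{e}=\alpha(e,e)^{-1}\alpha(s,e)s$. The $2$-cocycle identity applied to $s,e,e$, together with $s\circ e=s$ and $e\circ e=e$, gives $\alpha(s,e)\alpha(e,e)=\alpha(s,e)\alpha(s,e)$. Hence $\alpha(s,e)=\alpha(e,e)$, and so $s\hat{e}=s$. Since $b$ is central in $k_\alpha\C$, we get
$$\varphi(\hat{e}b)=c\,s\,b\,t=c\,b\,s\,t=c\,\alpha(s,t)\,b\,f.$$

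\emph{Step 3.} It remains to show that $c\,\alpha(s,t)=\alpha(f,f)^{-1}$. This is exactly the identity $\alpha(t,s)\alpha(e,e)=\alpha(s,t)\alpha(f,f)$ from \cite[Lemma 5.3]{LS}, already used in the proof of Proposition \ref{prop:commutative diagram for isomorphisms between idempotents}. It follows that $\varphi(\hat{e}b)=b\hat{f}=\hat{f}b$.

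The only delicate point is Step 1. The map $\varphi$ is defined only on basis elements of $G_e$ by a scalar formula, and one must match that scalar with the cocycle factors arising from the triple product in $k_\alpha\C$, taking care with the order of the arguments. After that, the rest is bookkeeping with the cocycle identity.
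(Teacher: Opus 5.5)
Your proof is correct, but it handles the twist differently from the paper.

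The paper first shrinks $\alpha$ to take values in a finite subfield $R$. It then uses the central extension $\hat{\C}$ and the commutative square of Proposition \ref{prop:commutative diagram for isomorphisms between idempotents} (ii) to reduce to trivial $\alpha$. There $\varphi(x)=sxt$, and the claim is the one-liner $sebt=setb=fb$.

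You instead stay inside $k_\alpha\C$. You observe that $\beta(x)$ splits as $\alpha(x,t)\alpha(s,x\circ t)$, which is exactly the scalar produced by the product $s\,x\,t$ in $k_\alpha\C$, times the constant $c=\alpha(e,e)^{-1}\alpha(t,s)^{-1}$. So $\varphi$ is $c$ times two-sided multiplication by $s$ and $t$. Centrality of $b$ then does the rest, once you track the scalars $\alpha(s,e)=\alpha(e,e)$ and $c\,\alpha(s,t)=\alpha(f,f)^{-1}$.

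Your route is more elementary and self-contained:
\begin{itemize}
\item It needs no finite-subfield reduction.
\item It needs no transport of $b$ along $\pi$ to a central idempotent $\pi^{-1}(b)$ of $k\hat{\C}e_R$, a step the paper leaves implicit.
\item It works verbatim over any commutative coefficient ring.
\item Even the quoted identity $\alpha(t,s)\alpha(e,e)=\alpha(s,t)\alpha(f,f)$ follows in one line from the cocycle identity applied to $(s,t,s)$, which gives $\alpha(s,e)\alpha(t,s)=\alpha(f,s)\alpha(s,t)$. Combine this with your $\alpha(s,e)=\alpha(e,e)$ and the analogous $\alpha(f,s)=\alpha(f,f)$.
\end{itemize}

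The paper's reduction has a different advantage: it makes the core computation scalar-free. It also fits the central-extension framework the paper uses again for the Brauer construction (Section \ref{s:Brauer construction} and Proposition \ref{prop:prove b-weights are well-defined}), where it compares $k_\alpha G_e$ and $k_\alpha G_f$ as $G_e$-algebras through $\hat{G}_e$ and $\hat{G}_f$.
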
	

\begin{proof}
Since $\C$ is finite and since any finite subset of $k=\bF_p$ is contained in a finite subfield of $k$, we may assume that $\alpha\in Z^2(\C;R^\times)$ for a finite subfield $R$ of $k$. By the commutative diagram in Proposition \ref{prop:commutative diagram for isomorphisms between idempotents} (ii), we may assume that $\alpha$ is trivial.	Then $\varphi$ is the $k$-algebra isomorphism $kG_e\cong kG_f$ sending $x\in kG_e$ to $s x t$. Since $b$ is in the center of $k\C$, we have $\varphi(eb)=sebt=setb=fb$, whence the result.
\end{proof}

\begin{proposition}[a generalisation of Proposition \ref{proposition:varphi(hat(e)b)=hat(f)b}]\label{prop:prove b-weights are well-defined}
Let $\C$ be a finite EI-category, $\alpha\in Z^2(\O_\C;k^\times)$ and $b$ a central idempotent of $k_\alpha\C$. Let $X$ and $Y$ be objects in $\C$. Set $e=\Id_X$, $f=\Id_Y$, $\hat{e}=\alpha(e,e)^{-1}e$ and $\hat{f}=\alpha(f,f)^{-1}f$. Assume that $e$ is isomorphic to $f$. Then there exist $s\in f\circ \Hom_\C(X,Y)\circ e$ and $t\in e\circ \Hom_\C(Y,X)\circ f$ satisfying $t\circ s=e$ and $s\circ t=f$. Let $\varphi$ be the $k$-algebra isomorphism $k_\alpha G_e\cong k_\alpha G_f$ described in Proposition \ref{prop:isomorphic idempotents} (i). Let $P$ be a $p$-subgroup of $G_e$ and let $Q=s\circ P\circ t$. Then $Q$ is a $p$-subgroup of $G_f$. By Proposition \ref{propositon:NGe(eP)/eP}, $\varphi$ restricts to a $k$-algebra isomorphism $k_\alpha N_{G_e}(P)\cong k_\alpha N_{G_f}(Q)$. Recall from Proposition \ref{prop:Brauer map and central idempotents} that $\br_P^{k_\alpha G_e}$ denotes the following composition of $N_{G_e}(P)$-algebra homomorphisms 
$$(k_\alpha G_e)^P\to (k_\alpha G_e)(P)\cong k_\alpha C_{G_e}(P)\hookrightarrow k_\alpha N_{G_e}(P).$$
Similarly, $\br_P^{k_\alpha G_f}$ denotes an $N_{G_f}(Q)$-algebra isomorphism $(k_\alpha G_f)^Q \to k_\alpha N_{G_e}(P)$. We have $\varphi(\br_P^{k_\alpha G_e}(\hat{e}b))=\br_P^{k_\alpha G_f}(\hat{f}b)$.
\end{proposition}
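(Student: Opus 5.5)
The plan is to reduce to the untwisted case via the central extension $\hat{\C}$, exactly as in the proof of Proposition \ref{proposition:varphi(hat(e)b)=hat(f)b}, and then use that $\varphi$ is induced by a group isomorphism. Throughout, $\br_P^{k_\alpha G_f}$ in the conclusion is read as the Brauer map at $Q$, that is $\br_Q^{k_\alpha G_f}$.

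First, \textbf{the key compatibility.} By Proposition \ref{prop:eb is in Ge}, $\hat{e}b\in Z(k_\alpha G_e)\subseteq (k_\alpha G_e)^P$, so both sides of the claimed identity are defined. By Proposition \ref{proposition:varphi(hat(e)b)=hat(f)b} we already know $\varphi(\hat{e}b)=\hat{f}b$. It therefore suffices to prove the general compatibility
$$\varphi\circ \br_P^{k_\alpha G_e}=\br_Q^{k_\alpha G_f}\circ\varphi \quad\text{on } (k_\alpha G_e)^P,$$
together with the fact that $\varphi$ maps $(k_\alpha G_e)^P$ onto $(k_\alpha G_f)^Q$ and $k_\alpha C_{G_e}(P)$ onto $k_\alpha C_{G_f}(Q)$. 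Applying this compatibility to $\hat{e}b$ then gives the result.

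Second, \textbf{reduction to trivial $\alpha$.} Choose a finite subfield $R$ with $\alpha$ taking values in $R^\times$, and pass to $\hat{G}_e$ and $\hat{G}_f$. By Proposition \ref{prop:commutative diagram for isomorphisms between idempotents}(ii), $\varphi$ corresponds under $\pi_e$ and $\pi_f$ to the group isomorphism $\hat{\varphi}\colon\hat{x}\mapsto (s,1)\hat{x}(t,c)$. By \ref{void:central extension}, $P$ lifts to $\hat{P}\le\hat{G}_e$, and $\hat{\varphi}(\hat{P})$ is a $p$-subgroup mapping onto $Q$ under $\tau$, so we may take $\hat{Q}=\hat{\varphi}(\hat{P})$. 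By Propositions \ref{prop:isomorphic as G-algebras} and \ref{prop:restrictions of twisted group algebras}, together with the diagrams in the proof of Proposition \ref{prop:Brauer map and central idempotents}(ii), the maps $\pi_e$ and $\pi_f$ intertwine the Brauer maps at $\hat{P}$ and at $P$ (respectively at $\hat{Q}$ and at $Q$), using the identification $kC_{\hat{G}_e}(\hat{P})e_R\cong k_\alpha C_{G_e}(P)$. So it is enough to check the compatibility for $\hat{\varphi}$ on $k\hat{G}_e$.

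Third, \textbf{the untwisted statement.} For a group isomorphism $\psi\colon H\to H'$ with $\psi(P)=Q$, the induced algebra isomorphism $kH\to kH'$ intertwines the conjugation actions of $P$ and $Q$. Hence it maps $(kH)^P$ to $(kH')^Q$ and maps relative traces $\mathrm{Tr}_{P'}^P$ to $\mathrm{Tr}_{\psi(P')}^Q$. In terms of bases, the Brauer map $\br_P$ on $(kH)^P$ is the projection onto $kC_H(P)$ that kills the non-centralizing $P$-orbit sums. Since $\psi(C_H(P))=C_{H'}(Q)$, we get $\psi\circ\br_P=\br_Q\circ\psi$. We apply this with $\psi=\hat{\varphi}$, noting that $\hat{\varphi}$ fixes $e_R$ up to the identification of the central subgroup $R^\times$.

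\textbf{Main obstacle.} The only delicate point is the bookkeeping in the second step: checking that the identifications $(k_\alpha G)(P)\cong k_\alpha C_G(P)$ built via $\pi$ are natural with respect to $\hat{\varphi}$. This amounts to checking that the square in Proposition \ref{prop:commutative diagram for isomorphisms between idempotents}(ii), restricted to centralizers, commutes. That follows from $\hat{\varphi}(C_{\hat{G}_e}(\hat{P}))=C_{\hat{G}_f}(\hat{Q})$ and Proposition \ref{prop:restrictions of twisted group algebras}(i).
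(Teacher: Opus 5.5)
Your proposal is correct and follows essentially the paper's route: pass to the central extensions $\hat{G}_e,\hat{G}_f$, use that $\varphi$ corresponds via $\pi_e,\pi_f$ to the group isomorphism $\hat{\varphi}$ and is therefore compatible with the conjugation actions, so that the Brauer construction is natural for $\varphi$, and then apply this to $\hat{e}b$ using $\varphi(\hat{e}b)=\hat{f}b$ from Proposition \ref{proposition:varphi(hat(e)b)=hat(f)b}. The only cosmetic difference is that the paper first shows $\varphi$ itself is an isomorphism of $G_e$-algebras (through $\varphi_0$) and then applies the $P$-Brauer functor to $\varphi$ directly, whereas you check the Brauer compatibility explicitly on orbit sums for $\hat{\varphi}$ and transport it through $\pi_e,\pi_f$; your reading of $\br_P^{k_\alpha G_f}$ as $\br_Q^{k_\alpha G_f}$ matches the map the paper actually uses in its diagram.
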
	

\begin{proof}
Recall that $k_\alpha G_e$ is a $G_e$-algebra and $k_\alpha G_f$ is a $G_f$-algebra; see \ref{void:central extension}. Denote by $\varphi_0$ the group isomorphism $G_e\cong G_f$ sending $x\in G_e$ to $s\circ x\circ t$. Through the group isomorphism $\varphi_0$, we can regard $G_f$-algebras as $G_e$-algebras. Hence $k_\alpha C_{G_f}(Q)$ and $(k_\alpha G_f)(Q)$ are regarded as $N_{G_e}(P)$-algebras.  As we explained before, we may assume that $\alpha\in Z^2(\C;R^\times)$ for a finite subfield $R$ of $k$. By Proposition \ref{prop:commutative diagram for isomorphisms between idempotents} (ii) we have the following commutative diagram of $k$-algebras:
$$\xymatrix{k\hat{G}_ee_R\ar[rr]^{\hat{\varphi}}_{\cong} \ar[d]_{\pi_e}^{\cong} &   &   k\hat{G}_fe_R \ar[d]^{\pi_f}_{\cong}   \\
	k_\alpha G_e \ar[rr]^{\varphi}_{\cong}  &    &  k_\alpha G_f
}$$
Through the group isomorphism $\hat{G}_e\cong \hat{G}_f$ we can regard $\hat{G}_f$-algebras as $\hat{G}_e$-algebras. In particular, we regard the $\hat{G}_f$-algebra $k\hat{G}_fe_R$ as a $\hat{G}_e$-algebra. Then one easily checks by definition that the isomorphism $\hat{\varphi}:k\hat{G}_ee_R\to k\hat{G}_fe_R$ is an isomorphism of $\hat{G}_e$-algebras. By Proposition \ref{prop:isomorphic as G-algebras} we can regard $k\hat{G}_ee_R$ as a $G_e$-algebra and reagrd $k\hat{G}_f$ as a $G_f$-algebra. Therefore, we can regard $k\hat{G}_f$ as a $G_e$-algebra via the group isomorphism $\varphi_0:G_e\cong G_f$.  Denote by $\tau_e$ (resp. $\tau_f$) the canonical surjection $\hat{G}_e\to G_e$ (resp. $\hat{G}_f\to G_f$) sending $(x,r)$ (resp. $(y,r)$) to $x$ (resp. $y$) for any $x\in G_e$ (resp. $y\in G_f$) and $r\in R^\times$. Then using definiton one also easily checks the following commutative diagram of groups:
$$\xymatrix{ \hat{G}_e\ar[rr]^{\hat{\varphi}}_{\cong}\ar[d]_{\tau_e}  &  &  \hat{G}_f\ar[d]^{\tau_f}\\
	G_e\ar[rr]^{\varphi_0}_{\cong}  &   &   G_f
}$$
Hence the isomorphism $\hat{\varphi}:k\hat{G}_ee_R\to k\hat{G}_fe_R$ is an isomorphism of $G_e$-algebras. Since $\pi_e$ and $\pi_f$ are isomorphisms of $G_e$-algebras (see Proposition \ref{prop:isomorphic as G-algebras}), the isomorphism $\varphi:k_\alpha G_e\cong k_\alpha G_f$ is an isomorphism of $G_e$-algebras.

Then applying the $P$-Brauer functors we have the following commutative diagram of $N_{G_e}(P)$-algebras:
$$\xymatrix{k_\alpha N_{G_e}(P)\ar[rr]^{\varphi}_{\cong}   &   &   k_\alpha N_{G_f}(Q)  \\
k_\alpha C_{G_e}(P)\ar[rr]^{\varphi}_{\cong} \ar@{_{(}->}[d]  \ar@{^{(}->}[u]\ar@/_4pc/[dd]_{\cong} &   &   k_\alpha C_{G_f}(Q) \ar@{^{(}->}[d] \ar@{_{(}->}[u] \ar@/^4pc/[dd]^{\cong}\\
(k_\alpha G_e)^P\ar[rr]^{\varphi}_{\cong} \ar[d]^{\br_P^{k_\alpha G_e}} &   &  (k_\alpha G_f)^Q \ar[d]_{\br_Q^{k_\alpha G_f}}   \\
	(k_\alpha G_e)(P) \ar[rr]^{\varphi(P)}_{\cong} &    &  (k_\alpha G_f)(Q) 
}$$
By Proposition \ref{proposition:varphi(hat(e)b)=hat(f)b}, we have $\varphi(\hat{e}b)=\hat{f}b$. Hence by the above commutative diagram we have 
$$\varphi(P)(\br_P^{k_\alpha G_e}(\hat{e}b))=\br_P^{k_\alpha G_f}(\hat{f}b).$$
Since we identify $(k_\alpha G_e)(P)$ (resp. $(k_\alpha G_f)(Q)$) and $k_\alpha C_{G_e}(P)$ (resp.  $k_\alpha C_{G_f}(Q)$), by the above commutative diagram, we have $\varphi(\br_P^{k_\alpha G_e}(\hat{e}b))=\br_P^{k_\alpha G_f}(\hat{f}b)$.
\end{proof}

\begin{definition}\label{defi:partition of weights by blocks}
	{\rm Keep the notation of Remark \ref{lemma:weights of orbit category} (i). Assume further that $\C$ is an EI-category. Then $e={\rm Id}_X$.  Set $\hat{e}=\alpha(e,e)^{-1}e$. We regard $T$ as a simple $k_\alpha N_{G_e}(P)$-module via the canonical surjection $k_\alpha N_{G_e}(P)\to k_\alpha N_{G_e}(P)/P$. Let $b$ be a central idempotent of $k_\alpha \C$.  Then by Proposition \ref{prop:eb is in Ge}, $\hat{e}b$ is in $k_\alpha G_e$. We say that $(\bar{e},T)$ is a {\it $b$-weight} of $k_\alpha\O_\C$ if $T$ is associated to the central idempotent $\hat{e}b$ of $k_\alpha G_e$; by Definition \ref{defi:associated to a block} this means that $\br_P^{k_\alpha G_e}(\hat{e}b)T\neq 0$. If the pair $(\bar{f},T')$ is another weight of $k_\alpha\O_\C$ isomorphic to $(\bar{e},T)$, then by Proposition \ref{prop:prove b-weights are well-defined} we see that $(\bar{e},T)$ is a $b$-weight if and only if $(\bar{f},T')$ is a $b$-weight. Hence $b$ also gives a partition of isomorphism classes of weights of $k_\alpha\O_\C$. We denote by $\W(k_\alpha\O_\C,b)$ the set of isomorphism classes of $b$-weights; clearly we have $\W(k_\alpha\O_\C,1)=\W(k_\alpha\O_\C)$.
	Let $\sigma\in \Gamma$. Assume that $(\bar{e}, T)$ is a $b$-weight of $k_\alpha\O_\C$, or equivalently, $T$ is associated to the central idempotent $\hat{e}b$ of $k_\alpha G_e$. Then by Definition \ref{defi:associated to a block}, ${}^\sigma T$ is associated to the central idempotent $\sigma(\hat{e}b)=\hat{e}\sigma(b)$ of $k_\alpha G_e$. This implies that $(\bar{e}, {}^\sigma T)$ is a $\sigma(b)$-weight of $k_\alpha\O_\C$. Hence the action of $\Gamma$ on the set of (isomorphism classes of) weights of $k_\alpha\O_\C$ restricts to an action of $\Gamma_b$ on the set of (isomorphism classes of) $b$-weights of $k_\alpha\O_\C$. 
	}	
\end{definition}

\begin{remark}
	{\rm Definition \ref{defi:partition of weights by blocks} gives a partition of (isomorphism classes of) weights of $k_\alpha\O_\C$ by a central idempotent (and hence a block) of $k_\alpha\C$ for any finite EI-category $\C$. If $\C$ is not EI, then we don't know whether $\hat{e}b$ lies in $k_\alpha G_e$, hence we can no longer have such a partition.
	}
\end{remark}

\begin{theorem}[{a block-theoretic refinement of Theorem \ref{theo:bijection between simple moudles and pairs}}]\label{theo:blockwise bijection between simple moudles and pairs}
Let $R$ be a commutative ring, $\C$ a finite EI-category, $\alpha\in Z^2(\C;R^\times)$ and $b$ a central idempotent of $k_\alpha\C$. The map sending a simple $R_\alpha\C b$-module $S$ to the pair $(e,eS)$, where $e$ is an idempotent endomorphism in $\C$ minimal with respect to $eS\neq 0$, induces a bijection $\Pi_b$ between $\S(R_\alpha \C b)$ and the set of isomorphism classes of pairs $(e,T)$ consisting of an idempotent endomorphism $e$ in $\C$ and a simple $R_\alpha G_e(b\hat{e})$-module $T$, where $\hat{e}=\alpha(e,e)^{-1}e$.	
\end{theorem}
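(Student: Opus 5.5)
We deduce the statement from Theorem \ref{theo:bijection between simple moudles and pairs} by checking that the bijection $\Pi$ there carries $\S(R_\alpha\C b)$ exactly onto the pairs $(e,T)$ with $T$ a simple $R_\alpha G_e(b\hat{e})$-module. (We read $b$ as a central idempotent of $R_\alpha\C$.) Since $\C$ is EI, every idempotent endomorphism is of the form $e=\Id_X$. By Proposition \ref{prop:eb is in Ge}, $\hat{e}b$ is a central idempotent of $R_\alpha G_e$. Hence every simple $R_\alpha G_e$-module $T$ satisfies either $\hat{e}bT=T$ or $\hat{e}bT=0$, and the simple $R_\alpha G_e(b\hat{e})$-modules are exactly those with $\hat{e}bT=T$.

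\emph{Forward direction.} Let $S$ be a simple $R_\alpha\C$-module with $bS=S$, and let $e$ be minimal with $eS\neq 0$. Then $eS=\hat{e}S$. Because $b$ is central, $\hat{e}b$ acts on $\hat{e}S$ as $b$ does, namely as the identity. Thus $eS=\hat{e}b\,eS$, so $eS$ is a simple $R_\alpha G_e(b\hat{e})$-module and $\Pi$ maps $\S(R_\alpha\C b)$ into the claimed set.

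\emph{Compatibility with isomorphism.} The target set must be a union of isomorphism classes of pairs. Suppose $(e,T)\cong(f,T')$ via $\varphi$. By Proposition \ref{proposition:varphi(hat(e)b)=hat(f)b}, $\varphi(\hat{e}b)=\hat{f}b$. That proposition is stated over $k$, but its proof (reduction to trivial $\alpha$ via Proposition \ref{prop:commutative diagram for isomorphisms between idempotents}, then $sebt=fb$) works directly over $R$. Indeed, one can compute directly that $\varphi(\hat{e}b)=\hat{f}b$ using centrality of $b$ and $\hat{e}b=\hat{e}b\hat{e}$. Therefore $\hat{e}bT=T$ if and only if $\hat{f}bT'=T'$.

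\emph{Surjectivity.} Let $T$ be a simple $R_\alpha G_e$-module with $\hat{e}bT=T$. By Theorem \ref{theo:bijection between simple moudles and pairs} there is a simple $R_\alpha\C$-module $S$ with $\Pi([S])=[(e,T)]$, so $eS\cong T$ for some minimal $e$. We must show $bS=S$. Since $S$ is simple and $b$ is central, either $bS=S$ or $bS=0$. If $bS=0$, then $\hat{e}b$ annihilates $eS\cong T$, contradicting $\hat{e}bT=T\neq 0$. Hence $S$ is an $R_\alpha\C b$-module.

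Injectivity is inherited from $\Pi$. We expect the only real care needed to be the independence of the block condition under isomorphism of pairs over a general ring $R$; this is handled by the $R$-version of Proposition \ref{proposition:varphi(hat(e)b)=hat(f)b} noted above.
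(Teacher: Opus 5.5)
Your proposal is correct and follows essentially the paper's argument: both restrict the bijection $\Pi$ of Theorem \ref{theo:bijection between simple moudles and pairs} using that $\hat{e}b$ is a central idempotent of $R_\alpha G_e$. Your direct check that a preimage $S$ satisfies $bS=S$ plays the role of the paper's splitting of both sides along $b$ and $1-b$. You also check explicitly that the condition $\hat{e}bT=T$ is invariant under isomorphism of pairs, which the paper leaves implicit. Over a general ring $R$, justify that step with the direct computation you mention, not with the central-extension proof of Proposition \ref{proposition:varphi(hat(e)b)=hat(f)b}, which relies on $\alpha$ taking values in a finite subfield of $k$. Concretely, $\varphi(a)=\alpha(e,e)^{-1}\alpha(t,s)^{-1}\,s\,a\,t$ for $a\in R_\alpha G_e$. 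Together with $s\hat{e}=s$, centrality of $b$, and $\alpha(t,s)\alpha(e,e)=\alpha(s,t)\alpha(f,f)$, this gives $\varphi(\hat{e}b)=\hat{f}b$.
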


\begin{proof}
Let $e$ be an idempotent endomorphism in $\C$. Since $\C$ is EI, $e={\rm Id}_X$ for some object $X$ in $\C$. By Proposition \ref{prop:eb is in Ge}, $\hat{e}b$ is a central idempotent in $R_\alpha G_e=R_\alpha G_e \hat{e}$. If $S$ is a simple $R_\alpha\C b$-module, then $eS$ is an $R_\alpha G_e(\hat{e}b)$-module. Since $eS$ is also a simple $R_\alpha G_e$-module (see Theorem \ref{theo:bijection between simple moudles and pairs}), it is a simple $R_\alpha G_e(\hat{e}b)$-module. Hence the map $\Pi$ in Theorem \ref{theo:bijection between simple moudles and pairs} restricts to a map $\Pi_b$ from $\S(R_\alpha \C b)$ to the set of isomorphism classes of pairs $(e,T)$ consisting of an idempotent endomorphism $e$ in $\C$ and a simple $R_\alpha G_e(b\hat{e})$-module $T$. Since 
$$\S(R_\alpha \C)=\S(R_\alpha\C b)\bigsqcup \S(R_\alpha\C (1-b))$$
and 
$$\S(R_\alpha G_e)=\S(R_\alpha G_e\hat{e})=\S(R_\alpha G_e (\hat{e}b))\bigsqcup \S(R_\alpha G_e (\hat{e}(1-b))),$$
the union of the maps $\Pi_b$ and $\Pi_{1-b}$ is exactly the bijection $\Pi$ in Theorem \ref{theo:bijection between simple moudles and pairs}. Hence $\Pi_b$ is a bijection.
\end{proof}

\begin{proposition}\label{prop:pi_b commutes with gamma_b}
Keep the notation of Theorem \ref{theo:blockwise bijection between simple moudles and pairs}. Assume that $R=k$ and $\alpha\in Z^2(\C;\F_p^\times)$. Then the bijection $\Pi_b$ commutes with the action of the Galois group $\Gamma_b$.
\end{proposition}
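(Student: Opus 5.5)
The plan is to deduce the statement from Proposition \ref{prop:pi commutes with gamma}, using the fact that $\Pi_b$ is the restriction of $\Pi$, as shown in the proof of Theorem \ref{theo:blockwise bijection between simple moudles and pairs}.

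First I will check that both sides carry a well-defined action of $\Gamma_b$. Let $\sigma\in\Gamma_b$, so $\sigma(b)=b$.

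On the module side: if $S$ is a simple $k_\alpha\C b$-module, then ${}^\sigma S$ is a simple $k_\alpha\C$-module and $b\,{}^\sigma S=\sigma^{-1}(b)S=bS=S$. Hence ${}^\sigma S$ is again a $k_\alpha\C b$-module, and $\Gamma_b$ acts on $\S(k_\alpha\C b)$.

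On the pair side, let $e=\Id_X$ and $\hat e=\alpha(e,e)^{-1}e$. Since $\alpha$ takes values in $\F_p^\times$, the element $\hat e$ is defined over $\F_p$, so $\sigma(\hat e b)=\hat e\sigma(b)=\hat e b$. If $T$ is a simple $k_\alpha G_e(\hat e b)$-module, then ${}^\sigma T$ is annihilated by $1-\hat e b$, because $\sigma^{-1}(1-\hat eb)=1-\hat eb$. So ${}^\sigma T$ is again a simple $k_\alpha G_e(\hat eb)$-module. Combined with Proposition \ref{prop: Galois action on pairs}, this shows that $\Gamma_b$ acts on the isomorphism classes of pairs $(e,T)$ with $T$ a simple $k_\alpha G_e(\hat eb)$-module. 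This set is a $\Gamma_b$-stable subset of the full set of pairs.

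Next, for $\sigma\in\Gamma_b$ and a simple $k_\alpha\C b$-module $S$, I will compute
$$\Pi_b([{}^\sigma S])=\Pi([{}^\sigma S])={}^\sigma\Pi([S])={}^\sigma\Pi_b([S]).$$
The first and last equalities hold because $\Pi_b$ is the restriction of $\Pi$ to $\S(k_\alpha\C b)$ (by construction in the proof of Theorem \ref{theo:blockwise bijection between simple moudles and pairs}) and ${}^\sigma S$ lies in $\S(k_\alpha\C b)$. The middle equality is Proposition \ref{prop:pi commutes with gamma}.

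The only point needing care is the stability check above: that $\sigma$ fixes $\hat e b$, which is where $\alpha(e,e)\in\F_p^\times$ is used. This is a routine verification, so I expect no real obstacle.
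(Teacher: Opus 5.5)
Your proposal is correct and matches the paper's proof. The paper simply observes that $\Pi_b$ is the restriction of $\Pi$ and invokes Proposition \ref{prop:pi commutes with gamma}; your explicit check that $\Gamma_b$ preserves both $\S(k_\alpha\C b)$ and the set of pairs $(e,T)$ with $T$ a $k_\alpha G_e(\hat e b)$-module (via $\sigma(\hat e b)=\hat e b$, using $\alpha(e,e)\in\F_p^\times$) spells out a detail the paper leaves implicit.
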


\begin{proof}
Since $\Pi_b$ is a part of the bijection $\Pi$ in Theorem \ref{theo:bijection between simple moudles and pairs}, the statement follows from Proposition \ref{prop:pi commutes with gamma}.
\end{proof}

\begin{proposition}\label{prop: Lambda and Omega compatible with blocks}
Let $\C$ be a finite EI-category and $\alpha\in Z^2(\O_\C;k^\times)$. Let $X$ and $Y$ be objects in $\C$. Let $e=\Id_X$ and $f=\Id_Y$ be isomorphic idempotents. Let $b$ be a central idempotent of $k_\alpha\C$. Set $\hat{e}=\alpha(e,e)^{-1}e$ and $\hat{f}=\alpha(f,f)^{-1}f$. Then the bijection $\Lambda_{e,f}$ defined in Notation \ref{notation:lambda_ef} restricts to a $\Gamma_b$-equivariant bijection $\S(k_\alpha G_e \hat{e}b)\to \S(k_\alpha G_f\hat{f}b)$ and the bijection $\Omega_{e,f}$ defined in Proposition \ref{proposition:bijection between isomorphic weights commutes with Galois} (ii) restricts to a $\Gamma_b$-equivariant bijection $\W(k_\alpha\O_{G_e}, \hat{e}b)\to \W(k_\alpha \O_{G_f},\hat{f}b)$. 
\end{proposition}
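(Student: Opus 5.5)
We prove the two claims separately; both rest on Proposition \ref{proposition:varphi(hat(e)b)=hat(f)b} and its refinement, Proposition \ref{prop:prove b-weights are well-defined}. Fix $s,t$ with $t\circ s=e$, $s\circ t=f$, and let $\varphi:k_\alpha G_e\cong k_\alpha G_f$ be the isomorphism of Proposition \ref{prop:isomorphic idempotents} (i). Recall that $\Lambda_{e,f}$ sends $[T]$ to $[{}_\varphi T]$, and that $\Lambda_{e,f}$ and $\Omega_{e,f}$ are already known to be $\Gamma$-equivariant bijections with inverses $\Lambda_{f,e}$ and $\Omega_{f,e}$. It therefore suffices to check that each maps the $b$-part into the $b$-part. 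Applying the same argument with $e$ and $f$ interchanged then gives the reverse inclusion, and $\Gamma_b$-equivariance is inherited by restriction from $\Gamma$-equivariance. Strictly speaking, $\Gamma$-equivariance was established in Notation \ref{notation:lambda_ef} and Proposition \ref{proposition:bijection between isomorphic weights commutes with Galois} (ii) under the assumption that $\alpha$ takes values in $\F_p^\times$. We adopt that assumption here whenever Galois actions are discussed, as is implicit in the statement.

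For $\Lambda_{e,f}$, let $T$ be a simple $k_\alpha G_e\hat e b$-module, so that $\hat e b$ acts as the identity on $T$. On ${}_\varphi T$, an element $y\in k_\alpha G_f$ acts as $\varphi^{-1}(y)$ acts on $T$. By Proposition \ref{proposition:varphi(hat(e)b)=hat(f)b} we have $\varphi^{-1}(\hat f b)=\hat e b$, so $\hat f b$ acts as the identity on ${}_\varphi T$. Hence ${}_\varphi T$ is a $k_\alpha G_f\hat f b$-module, and $\Lambda_{e,f}$ restricts to a map $\S(k_\alpha G_e\hat e b)\to\S(k_\alpha G_f\hat f b)$.

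For $\Omega_{e,f}$, let $(P,T)$ represent a class in $\W(k_\alpha\O_{G_e},\hat e b)$. As in the proof of Proposition \ref{proposition:bijection between isomorphic weights commutes with Galois}, its image is represented by $(Q,T')$ with $Q=s\circ P\circ t$ and $T'\cong{}_{\bar\varphi}T$. When we regard $T$ as a $k_\alpha N_{G_e}(P)$-module by inflation, ${}_{\bar\varphi}T$ inflates to ${}_{\varphi}T$, where $\varphi$ denotes the restricted isomorphism $k_\alpha N_{G_e}(P)\cong k_\alpha N_{G_f}(Q)$ of Proposition \ref{propositon:NGe(eP)/eP}. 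This holds because $\bar\varphi$ is induced by $\varphi$. The defining condition for a $b$-weight is $\br_P^{k_\alpha G_e}(\hat e b)T\neq0$. By Proposition \ref{prop:prove b-weights are well-defined}, $\varphi(\br_P^{k_\alpha G_e}(\hat e b))=\br_Q^{k_\alpha G_f}(\hat f b)$; the statement there writes $\br_P$ where $\br_Q$ is meant. On ${}_\varphi T$, the element $\br_Q^{k_\alpha G_f}(\hat f b)$ therefore acts exactly as $\br_P^{k_\alpha G_e}(\hat e b)$ acts on $T$, which is nonzero. Hence $(Q,T')$ is an $\hat f b$-weight of $k_\alpha\O_{G_f}$.

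The main obstacle is bookkeeping rather than any new idea. We must check that the identification of $T$ with an $N_{G_e}(P)$-module is compatible with $\bar\varphi$ versus $\varphi$; this requires the cocycle to be trivial on $P$, which is where $\alpha\in Z^2(\O_\C;k^\times)$ enters. We must also check that the choice of representative $Q$ within its $G_f$-conjugacy class is harmless. For the latter, conjugating $Q$ by $g\in G_f$ is implemented by an inner automorphism of $k_\alpha G_f$ that fixes the central element $\hat f b$. This automorphism commutes with the Brauer homomorphisms, carrying $\br_Q^{k_\alpha G_f}(\hat f b)$ to $\br_{{}^gQ}^{k_\alpha G_f}(\hat f b)$. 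So the $b$-weight condition is stable under this change of representative, consistent with Definition \ref{defi:partition of weights by blocks}.
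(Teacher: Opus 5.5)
Your proposal is correct and follows the paper's own proof, which says only that the first claim follows from Proposition \ref{proposition:varphi(hat(e)b)=hat(f)b} and the second from Proposition \ref{prop:prove b-weights are well-defined}. You fill in the module-level details: $\hat f b$ (resp.\ $\br_Q^{k_\alpha G_f}(\hat f b)$) acts on ${}_\varphi T$ as $\hat e b$ (resp.\ $\br_P^{k_\alpha G_e}(\hat e b)$) acts on $T$, and surjectivity comes from the inverses $\Lambda_{f,e}$ and $\Omega_{f,e}$. Your side remarks are also accurate: the statement of Proposition \ref{prop:prove b-weights are well-defined} writes $\br_P$ where $\br_Q$ is meant, and the $\Gamma_b$-equivariance tacitly needs $\alpha$ to take values in $\F_p^\times$.
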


\begin{proof}
The first statement follows from Proposition \ref{proposition:varphi(hat(e)b)=hat(f)b} and the second from Proposition \ref{prop:prove b-weights are well-defined}.
\end{proof}

\begin{proof}[Proof of Theorem \ref{theorem:main2}]
 We will use the notation of Lemma \ref{lemma: L14 Lemma 3.5}. By Theorem \ref{theo:blockwise bijection between simple moudles and pairs}, there is a bijection
	$$\Pi_b: \S(k_\alpha \C b)\to \bigsqcup_{e\in \E} \S(k_\alpha G_e (\hat{e}b)),$$
	where $\hat{e}=\alpha(e,e)^{-1}e$.
	By Proposition \ref{prop:pi_b commutes with gamma_b}, $\Pi_b$ is commuting with the action of $\Gamma_b$. By assumption, the BGAWC holds for $k_\alpha G_e (\hat{e}b)$, that is, for any $e\in \E$ there is a $\Gamma_b$-equivariant bijection 
	$$\S(k_\alpha G_e (\hat{e}b))\to \bigsqcup_{P\in \X_e}\mathcal{Z}(k_\alpha N_{G_e}(P)/P(\overline{\br_P^{k_\alpha G_e}(\hat{e}b)})),$$
	where $\overline{\br_P^{k_\alpha G_e}(\hat{e}b)}$ is the image of $\br_P^{k_\alpha G_e}(\hat{e}b)\in k_\alpha N_{G_e}(P)$ in $k_\alpha N_{G_e}(P)/P$, and $$\mathcal{Z}(k_\alpha N_{G_e}(P)/P (\overline{\br_P^{k_\alpha G_e}(\hat{e}b)}))$$ is the set of isomorphism classes of projective simple $k_\alpha N_{G_e}(P)/P(\overline{\br_P^{k_\alpha G_e}(\hat{e}b)})$-modules. Hence there is a bijection
	$$\S(k_\alpha \C b)\to \bigsqcup_{e\in\E}\bigsqcup_{P\in \X_e} \mathcal{Z}(k_\alpha N_{G_e}(P)/P(\overline{\br_P^{k_\alpha G_e}(\hat{e}b)}))$$
	commuting with the action of $\Gamma_b$. 
	
	It remains to show that the right side corresponds to a set of representatives of the isomorphism classes of $b$-weights of $k_\alpha \O_\C$. In this double union, $e$ runs over $\E$ and $P$ over $\X_e$. By Lemma \ref{lemma: L14 Lemma 3.5} (ii), this implies that the triples $(e,P,P)$ runs over a set of representatives of the isomorphism classes of idempotent endomorphisms in $\T_\C$. By Lemma \ref{lemma:maximal subgroups of orbit category} (ii), the images of the triples in the morphism sets of $\O_\C$ runs over a set of representatives of the isomorphism classes of idempotent endomorphisms in $\O_\C$. By Lemma \ref{lemma:maximal subgroups of orbit category} (iv), the maximal subgroup determined by the image of any such $(e,P,P)$ in $\O_\C$ is $N_{G_e}(P)/P$, and hence when $[S]$ runs over $\mathcal{Z}(k_\alpha N_{G_e}(P)/P(\overline{\br_P^{k_\alpha G_e}(\hat{e}b)}))$, the quadruple $(e,P,P,S)$ runs over a set of representatives of the isomorphism classes of $b$-weights of $k_\alpha\O_\C$ associated with the image of $(e,P,P)$ in $\O_\C$.  Therefore, the double union corresponds to a set of representatives the isomorphism classes of $b$-weights of $k_\alpha \O_\C$.
\end{proof}

\begin{remark}
	{\rm In Theorem \ref{theorem:main2}, the assumption that $\alpha\in Z^2(\O_\C;\F_p^\times)$ is only using for ensuring that we can consider the actions of $\Gamma_b$. If we consider the non-Galois version of the blockwise Alperin weight conjecture (BAWC), then we can assume that $\alpha\in Z^2(\O_\C;k^\times)$. The proof of Theorem \ref{theorem:main2} can be easily adapted to a proof of the statement that:  if for any idempotent endomorphism $e$ in $\C$, there is a bijection $\S(k_\alpha G_e \hat{e}b)\to \W(k_\alpha \O_{G_e},\hat{e}b)$, then there exists a  bijection $\S(k_\alpha \C b)\to \W(k_\alpha\O_{\C},b)$.  In particular, the BAWC for finite twisted EI-categorie algebras and finite twisted group algebras are equivalent.
	}
\end{remark}

For the proof of Theorem \ref{theorem:main2}, we have developed many auxiliary results. Using part of those auxiliary results, we show the BGAWC for finite twisted EI-category algebras and finite ordinary EI-category algebras are equivalent:

\begin{theorem}\label{theorem:BGAWC for twisted algebra and ordinary algebra are equivalent}
Let $\C$ be a finite EI-category and $\alpha\in Z^2(\O_\C;\F_p^\times)$. Let $\hat{\C}$ be the extension of $\C$ by $\F_p^\times$ associated with $\alpha$; see Notation \ref{notation: extension of category}. The following are equivariant:
\begin{enumerate}[{\rm (i)}]
	\item For any central idempotent $b$ of $k\hat{\C}e_{\F_p}$, there is a $\Gamma_b$-equivariant bijection $\S(k\hat{\C}b)\to \W(k\O_{\hat{\C}},b)$. Here $e_{\F_p}=\frac{1}{|\F_p^\times|}\sum_{r\in \F_p^\times}\sum_{X\in \Ob(\C)}r^{-1}(\Id_X,\alpha(\Id_X,\Id_X)^{-1}r)$ is a central idempotent of $k\hat{\C}$; see Proposition \ref{prop: isomorphism between khatC and kalphaC} (ii).
	\item For any central idempotent $b$ of $k_\alpha\C$, there is a $\Gamma_b$-equivariant bijection $\S(k_\alpha\C b)\to \W(k_\alpha\O_\C,b)$.
\end{enumerate}
\end{theorem}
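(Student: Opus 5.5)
The plan is to transport everything along the isomorphism $\pi:k\hat{\C}e_{\F_p}\cong k_\alpha\C$ of Proposition \ref{prop: isomorphism between khatC and kalphaC} (ii), with $R=\F_p$. Since $\alpha$ takes values in $\F_p^\times$, $\pi$ is defined over $\F_p$, and it sends basis elements to scalar multiples in $\F_p^\times$ of basis elements. Hence $\pi$ commutes with every $\sigma\in\Gamma$. Consequently $\pi$ gives a bijection between central idempotents $b$ of $k\hat{\C}e_{\F_p}$ and central idempotents $\pi(b)$ of $k_\alpha\C$, with $\Gamma_b=\Gamma_{\pi(b)}$. It also gives a $\Gamma_b$-equivariant bijection $\S(k\hat{\C}b)\cong\S(k_\alpha\C\pi(b))$, via $S\mapsto {}_\pi S$. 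A central idempotent $b$ of $k\hat{\C}e_{\F_p}$ is a central idempotent of $k\hat{\C}$ with $b\le e_{\F_p}$, so (i) is a statement about $k\hat{\C}$. It therefore suffices to build a $\Gamma_b$-equivariant bijection $\W(k\O_{\hat{\C}},b)\cong\W(k_\alpha\O_\C,\pi(b))$.

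For the weights, I would compare the two sides at the level of the data of Lemma \ref{lemma: L14 Lemma 3.5}. By Lemma \ref{lemma:correspondence of idempotents in extension}, isomorphism classes of idempotents in $\hat{\C}$ correspond to those in $\C$, and the maximal subgroups are $\hat{G}_e$, central extensions of $G_e$ by $\F_p^\times$. Since $p\nmid|\F_p^\times|$, the $p$-subgroups of $\hat{G}_e$ up to conjugacy correspond to those of $G_e$ via $P\mapsto\hat{P}$, as in \ref{void:central extension}. By Proposition \ref{prop:restrictions of twisted group algebras} (i), $N_{\hat{G}_e}(\hat{P})/\hat{P}$ is a central extension of $N_{G_e}(P)/P$ by $\F_p^\times$. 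By Proposition \ref{prop:restrictions of twisted group algebras} (iii), (iv) we get an $\F_p$-defined isomorphism
\[
kN_{\hat{G}_e}(\hat{P})/\hat{P}\,\bar{e}_{\F_p}\cong k_\alpha N_{G_e}(P)/P .
\]
(Here we need that $\alpha$ comes from $\O_\C$, so that it is trivial on $P$.) Thus projective simple modules of the right side correspond $\Gamma$-equivariantly to projective simple modules of $kN_{\hat{G}_e}(\hat{P})/\hat{P}$ lying over $\bar e_{\F_p}$.

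Now restrict to $b$. A $b$-weight $(\hat{P},T)$ of $k\O_{\hat{\C}}$ with $b\le e_{\F_p}$ must satisfy $\br_{\hat{P}}(\hat{e}b)T\ne0$. This forces $\bar e_{\F_p}T=T$, since $\br_{\hat P}(e_{\F_p})=e_{\F_p}$: the element $e_{\F_p}$ is supported on the central subgroup, which centralises $\hat P$. Hence these weights are exactly the weights seen in the displayed isomorphism. Finally I must check the compatibility $\pi(\br_{\hat{P}}^{k\hat{G}_e}(\hat{e}b))=\br_P^{k_\alpha G_e}(\hat{e}\pi(b))$. This follows from the commutative diagram in the proof of Proposition \ref{prop:Brauer map and central idempotents} (ii), i.e.\ from the naturality of the Brauer construction under the $G_e$-algebra isomorphism $\pi_e$ of Propositions \ref{prop:isomorphic as G-algebras} and \ref{prop:commutative diagram for isomorphisms between idempotents} (i). Compatibility with isomorphism of weights across different idempotents follows from the commutative square in Proposition \ref{prop:commutative diagram for isomorphisms between idempotents} (ii). With the two $\Gamma_b$-equivariant bijections in hand, (i) and (ii) are equivalent by composing.

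I expect the main obstacle to be this weight-level bookkeeping. One point is the identification $\O_{\hat{\C}}$ versus $\O_\C$, i.e.\ that the $p$-subgroups of $\End_{\hat{\C}}(X)$ lift uniquely. The other is making sure that the $\hat{e}$ normalisations (with $\alpha(e,e)^{-1}$) and the idempotent $e_{\F_p}$ match under $\pi$ after applying the Brauer map. I would handle this by working one idempotent $e=\Id_X$ at a time, using Proposition \ref{prop:eb is in Ge}, and reducing everything to the group case $\hat{G}_e\to G_e$ treated in Section \ref{s:Brauer construction}.
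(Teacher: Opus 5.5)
Your proposal is correct and takes essentially the same route as the paper's sketched proof. Both transport simple modules along the $\F_p$-defined isomorphism $\pi:k\hat{\C}e_{\F_p}\cong k_\alpha\C$, then match weights idempotent by idempotent using Proposition~\ref{prop:restrictions of twisted group algebras} and the commutative square relating $\hat{\varphi}$ and $\varphi$, with $\Gamma_b$-equivariance coming from everything being defined over $\F_p$; you also fill in steps the paper leaves implicit, namely the unique lifting $P\mapsto\hat{P}$ of $p$-subgroups and the fact that $b\le e_{\F_p}$ forces a $b$-weight module to lie over $\bar{e}_{\F_p}$.
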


\begin{proof}[(Sketch of) Proof]
By Proposition \ref{prop: isomorphism between khatC and kalphaC} (ii), we have a $k$-algebra isomorphism $\pi:k\hat{\C}e_{\F_p}\cong k_\alpha\C$. Let $b$ be a central idemptent of $k\hat{\C}e_{\F_p}$. Since $\alpha$ is defined over $\F_p^\times$, the isomorphism $\pi$ is defined over $\F_p$ as well. Then one easily checks that $\pi$ induces a $\Gamma_b$-equivariant bijection between $\S(k\hat{\C}b)$ and $\S(k_\alpha\C \pi(b))$ sending $[T]$ to $[{}_\pi T]$ for any simple $k\hat{\C}b$-module. By using Proposition \ref{prop:restrictions of twisted group algebras} and the commutative diagram of $G_e$-algebras
$$\xymatrix{k\hat{G}_ee_R\ar[rr]^{\hat{\varphi}}_{\cong} \ar[d]_{\pi_e}^{\cong} &   &   k\hat{G}_fe_R \ar[d]^{\pi_f}_{\cong}   \\
	k_\alpha G_e \ar[rr]^{\varphi}_{\cong}  &    &  k_\alpha G_f
}$$
in the proof of Proposition \ref{prop:prove b-weights are well-defined}, one checks that $\pi$ also induces a bijection between $\W(k\O_{\hat{\C}},b)$ and $\W(k_\alpha\O_\C,\pi(b))$. Again by using the assumption that $\alpha$ is defined over $\F_p^\times$, this biection is $\Gamma_b$-equivariant as well. The claim follows.
\end{proof}

\begin{remark}
{\rm In Theorem \ref{theorem:BGAWC for twisted algebra and ordinary algebra are equivalent}, the assumption that $\alpha\in Z^2(\O_\C;\F_p^\times)$ is only using for ensuring that the bijections $\S(k\hat{\C}b)\to\S(k_\alpha\C \pi(b))$ and $\W(k\O_{\hat{\C}},b)\to\W(k_\alpha\O_\C,\pi(b))$ induced by $\pi$ are $\Gamma_b$-equivariant. If we consider the non-Galois version of the BAWC, then we can assume that $\alpha\in Z^2(\O_\C;k^\times)$. The proof of Theorem \ref{theorem:BGAWC for twisted algebra and ordinary algebra are equivalent} can be easily adapted to a proof of the statement that: there is a bijection $\S(k\hat{\C}b)\to \W(k\O_{\hat{\C}},b)$ if and only if there is a bijection $\S(k_\alpha\C \pi(b))\to \W(k_\alpha\O_\C,\pi(b))$. In particular, the BAWC for finite twisted EI-category algebras and finite ordinary EI-category algebras are equivalent.
}
\end{remark}

\section{Automorphisms of categories and proof of Theorem \ref{theorem:main3}}\label{s:Automorphisms of categories}

Let $\C$ be a finite category and $\FF\in \Aut(\C)$. For any object $X$ in $\C$, $\FF$ defines an isomorphism $\FF_X:\End_\C(X)\to \End_\C(\FF(X))$ of moniods; for shorthand we abusively denote the isomorphism $\FF_X$ by $\FF$. Let $e$ be an idempotent in $\End_\C(X)$. Then $\FF(e)$ is an idempotent in $\End_\C(\FF(X))$. The monoid isomorphism $\FF:\End_\C(X)\to \End_\C({\FF(X)})$ restricts to a group isomorphism $\FF_e: G_e\to G_{\FF(e)}$, which in turn extends to a $k$-algebra isomorphism $kG_e\to kG_{\FF(e)}$ still denoted by $\FF_e$. For any $kG_e$-module $U$, we set ${}_{\FF}U$ to be the $kG_{\FF(e)}$-module ${}_{\FF_e}U$ and set ${}_{\FF}[U]$ to be $[{}_{\FF}U]$.

\begin{proposition}\label{prop: Aut action on pairs}
	Let $\C$ be a finite category and $\FF\in \Aut(\C)$. There is a well-defined action of the automorphism group $\Aut(\C)$ on the set of isomorphism classes of pairs $(e,T)$ consisting of an idempotent endomorphism $e$ in $\C$ and a simple $k G_e$-module $T$ via ${}^{\FF}(e,T)=(\FF(e),{}_{\FF_e} T)$. Sometimes we will also write ${}^\FF(P,T)$ as ${}_\FF(P,T)$. Moreover, $(e,T)$ is a weight if and only if $(\FF(e), {}_{\FF_e} T)$ is a weight.
\end{proposition}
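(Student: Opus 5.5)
The plan follows the proof of Proposition \ref{prop: Galois action on pairs}, with the Galois twist replaced by transport along $\FF$. There are three things to check: the rule ${}^{\FF}(e,T)=(\FF(e),{}_{\FF_e}T)$ is compatible with isomorphism of pairs, it defines a group action, and it preserves weights.

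\textbf{Compatibility with isomorphism.} Suppose $(e,T)$ is isomorphic to $(e',T')$, with $e\in\End_\C(X)$ and $e'\in\End_\C(X')$.
\begin{enumerate}
\item Choose $s\in e'\circ\Hom_\C(X,X')\circ e$ and $t\in e\circ \Hom_\C(X',X)\circ e'$ with $t\circ s=e$ and $s\circ t=e'$.
\item Let $\varphi:kG_e\cong kG_{e'}$ be the map $x\mapsto s\circ x\circ t$ from Proposition \ref{prop:isomorphic idempotents} (i); since $\alpha$ is trivial here, there is no scalar factor. By assumption ${}_\varphi T\cong T'$.
\item Since $\FF$ is a functor, $\FF(t)\circ\FF(s)=\FF(e)$ and $\FF(s)\circ \FF(t)=\FF(e')$. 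Also $\FF(s)$ lies in $\FF(e')\circ\Hom_\C(\FF X,\FF X')\circ\FF(e)$, and symmetrically for $\FF(t)$. So $\FF(e)$ and $\FF(e')$ are isomorphic idempotents.
\item The associated isomorphism $\varphi':kG_{\FF(e)}\cong kG_{\FF(e')}$ is $y\mapsto \FF(s)\circ y\circ\FF(t)$.
\item Functoriality gives the key identity
$$\varphi'\circ\FF_e=\FF_{e'}\circ\varphi \quad\text{as maps } kG_e\to kG_{\FF(e')},$$
because $\FF(s\circ x\circ t)=\FF(s)\circ\FF(x)\circ\FF(t)$.
\item Twisting is transitive: ${}_{\psi}({}_{\chi}U)={}_{\psi\circ\chi}U$, since the structure map is $\chi^{-1}\circ\psi^{-1}=(\psi\circ\chi)^{-1}$. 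Hence
$${}_{\varphi'}({}_{\FF_e}T)={}_{\FF_{e'}}({}_\varphi T)\cong {}_{\FF_{e'}}T'.$$
This shows $(\FF(e),{}_{\FF_e}T)\cong(\FF(e'),{}_{\FF_{e'}}T')$.
\item By Proposition \ref{prop:isomorphic idempotents} (ii), the conclusion does not depend on the choice of $s,t$.
\end{enumerate}

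\textbf{Action axioms.} For $\FF,\G\in\Aut(\C)$ we have $(\G\circ\FF)_e=\G_{\FF(e)}\circ\FF_e$. Transitivity of twisting then gives
$${}_{(\G\FF)_e}T={}_{\G_{\FF(e)}}({}_{\FF_e}T).$$
The identity functor acts trivially. So this is a group action.

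\textbf{Weights.} $\FF_e$ is an algebra isomorphism $kG_e\to kG_{\FF(e)}$. Twisting along an isomorphism carries simple modules to simple modules, and it carries projective modules to projective modules because it maps $kG_e$ to ${}_{\FF_e}kG_e\cong kG_{\FF(e)}$ and commutes with direct sums. Applying the same argument to $\FF^{-1}$ gives the converse. Hence $T$ is projective simple if and only if ${}_{\FF_e}T$ is, so $(e,T)$ is a weight if and only if $(\FF(e),{}_{\FF_e}T)$ is.

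The only step needing care is the compatibility identity $\varphi'\circ\FF_e=\FF_{e'}\circ\varphi$ together with the bookkeeping of the twist conventions in the second step. Both follow directly from functoriality of $\FF$, since no cocycle is present.
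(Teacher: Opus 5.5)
Your proposal is correct and follows essentially the same route as the paper: transport $\varphi$ along $\FF$ (the paper's $\FF_\varphi=\FF_{e'}\circ\varphi\circ\FF_e^{-1}$ is exactly your $\varphi'$), use transitivity of twisting, and deduce projectivity from ${}_{\FF_e}kG_e\cong kG_{\FF(e)}$. You also fill in two details the paper leaves implicit: the identification of $\varphi'$ with $y\mapsto\FF(s)\circ y\circ\FF(t)$ via functoriality, and the verification of the action axioms.
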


\begin{proof}
By the definition of being isomorphic (see the first paragraph of Section \ref{s2:Twisted category algebras and their idempotent endomorphisms}), it is clear that two idempotent endomorphisms $e$ and $e'$ are isomorphic if and only if $\FF(e)$ and $\FF(e')$ are isomorphic. For the first statement we need to show that if $(e,T)$ is isomorphic to $(e',T')$, then $(\FF(e),{}_{\FF_e}T)$ is isomorphic to $(\FF(e'),{}_{\FF_{e'}}(T'))$.  By Proposition \ref{prop:isomorphic idempotents} (i), there is a $k$-algebra isomorphism $\varphi:k G_e\to k G_{e'}$. The group isomorphisms $\FF_e:G_e\to G_{\FF(e)}$ and $\FF_{e'}:G_{e'}\to G_{\FF(e')}$ induce $k$-algebra isomorphisms $\FF_e:kG_e\to kG_{\FF(e)}$ and $\FF_{e'}:kG_{e'}\to kG_{\FF(e')}$, respectively. Let $\FF_\varphi:=\FF_{e'}\circ \varphi\circ \FF_e^{-1}$.
  We need to show that the isomorphism classes of ${}_{\FF_e}T$ and ${}_{\FF_{e'}}T'$ correspond to each other through the induced isomorphism $\FF_\varphi:kG_{\FF(e)}\to kG_{\FF(e')}$. In other words, we need to show that ${}_{\FF_\varphi}({}_{\FF_{e}}T) \cong {}_{\FF_{e'}}T'$ as $kG_{\FF(e')}$-modules. Since $(e,T)$ is isomorphic to $(e',T')$, there is an isomorphism ${}_\varphi T \cong T'$ of $kG_{e'}$-modules. It follows that 
  $${}_{\FF_\varphi}({}_{\FF_{e}}T)={}_{\FF_\varphi\circ \FF_{e}}T={}_{\FF_{e'}\circ \varphi}T={}_{\FF_{e'}}({}_\varphi T)\cong {}_{\FF_{e'}}T',$$
  as required. This proves the first statement. For the second statement, we need to show that $T$ is a projective $kG_e$-module if and only if ${}_{\FF_e} T$ is a projective $kG_{\FF(e)}$-module. It suffices to show that ${}_{\FF_e} kG_e\cong kG_{\FF(e)}$ as $kG_{\FF(e)}$-modules. Indeed, the map sending $a\in {}_{\FF_e}kG_e$ to $\FF_e(a)\in kG_{\FF(e)}$ is a desired isomorphism.
\end{proof}

\begin{proposition}\label{prop:pi commutes with aut}
	Keep the notation of Theorem \ref{theo:bijection between simple moudles and pairs}. Assume that $R=k$ and $\alpha$ is trivial. Then the bijection $\Pi$ commutes with the action of the automorphism group $\Aut(\C)$.
\end{proposition}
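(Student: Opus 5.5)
We argue exactly as for Proposition \ref{prop:pi commutes with gamma}, with the Galois twist replaced by transport of structure along $\FF$. Fix $\FF\in\Aut(\C)$ and a simple $k\C$-module $S$, and let $e\in\End_\C(X)$ be an idempotent minimal with respect to $eS\neq 0$, so that $\Pi([S])=[(e,eS)]$. We must show that $\Pi([{}_\FF S])={}^\FF[(e,eS)]=[(\FF(e),{}_{\FF_e}(eS))]$, the last equality being the definition in Proposition \ref{prop: Aut action on pairs}.

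\textbf{Step 1: the evaluation space.} Recall that ${}_\FF S$ is $S$ as a $k$-space with $a\in k\C$ acting as $\FF^{-1}(a)$; it is simple by Notation \ref{nation:automorphisms of category}. For any idempotent $d$ in $\C$, the element $\FF(d)$ acts on ${}_\FF S$ as $d$ acts on $S$. Hence
$$\FF(d)\,({}_\FF S)=dS$$
as subspaces of $S$.

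\textbf{Step 2: minimality.} The map $d\mapsto\FF(d)$ is a bijection on idempotent endomorphisms, and it preserves the partial order $\leq$ in both directions, since $\FF$ and $\FF^{-1}$ are functors. So $\FF(e)({}_\FF S)=eS\neq 0$. Suppose $d'\leq\FF(e)$ satisfies $d'({}_\FF S)\neq 0$. Then $d=\FF^{-1}(d')\leq e$ and $dS\neq 0$, so minimality of $e$ gives $d=e$, that is, $d'=\FF(e)$. Thus $\FF(e)$ is minimal with respect to $\FF(e)({}_\FF S)\neq 0$. By Theorem \ref{theo:bijection between simple moudles and pairs} (which does not depend on the choice of minimal idempotent up to isomorphism of pairs), $\Pi([{}_\FF S])=[(\FF(e),\FF(e)({}_\FF S))]$.

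\textbf{Step 3: identifying the module.} It remains to check that $\FF(e)({}_\FF S)$ and ${}_{\FF_e}(eS)$ are isomorphic $kG_{\FF(e)}$-modules. By Step 1 both have underlying space $eS$. For $y\in G_{\FF(e)}$, on $\FF(e)({}_\FF S)$ the element $y$ acts as $\FF^{-1}(y)$ acts on $S$. On ${}_{\FF_e}(eS)$ it acts as $\FF_e^{-1}(y)$, and $\FF_e^{-1}(y)=\FF^{-1}(y)$ because $\FF_e$ is the restriction of $\FF$ to $G_e$. So the identity map of $eS$ is the required isomorphism. Combining Steps 2 and 3 gives $\Pi([{}_\FF S])={}^\FF\Pi([S])$.

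\textbf{Main obstacle.} The only delicate point is bookkeeping. We need the $k$-algebra automorphism $\FF$ of $k\C$ to restrict compatibly to $\FF_e:kG_e\to kG_{\FF(e)}$ on the corner $e\,k\C\,e$. We also need the twisting convention (via $\FF^{-1}$) to be the same in both places. Since $\alpha$ is trivial, no cocycle scalars arise, so this restriction is literal and both places use the same convention.
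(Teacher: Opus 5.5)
Your proof is correct and follows the same route as the paper's: you show that $\FF(e)$ is minimal with $\FF(e)({}_\FF S)\neq 0$, identify $\FF(e)({}_\FF S)\cong{}_{\FF_e}(eS)$ as $kG_{\FF(e)}$-modules, and then apply the definition of the $\Aut(\C)$-action on pairs. You simply spell out the details the paper leaves as ``one easily checks'': the equality of underlying subspaces, the fact that $\FF$ preserves the order in both directions, and the identity map serving as the isomorphism.
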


\begin{proof}
	Let $\FF\in \Aut(\C)$ and $S$ a simple $k\C$-module. Denote by $[S]$ the isomorphism class of $S$. Let $e$ be an idempotent endomorphism in $\C$, minimal with respect to $eS\neq 0$. Then one easily checks that ${}_{\FF} S$ is a simple $k\C$-module, $\FF(e)({}_{\FF} S)$ is a $k G_{\FF(e)}$-module, and $\FF(e)({}_{\FF} S)\cong {}_{\FF}(eS)$ as $k G_{\FF(e)}$-modules. Hence by the minimality of $e$, we see that $\FF(e)$ is minimal with respect to $\FF(e)({}_{\FF} S)\neq 0$. Therefore, we have 
	$$\Pi([{}_\FF S])=[(\FF(e),\FF(e)({}_\FF S))]=[(\FF(e),{}_\FF(eS))]={}^{\FF}[(e,eS)]={}^{\FF}(\Pi([S])),$$
	where the notation $[(e,S)]$ denotes the isomorphism class of $(e,S)$, and where the third equality holds by Proposition \ref{prop: Aut action on pairs}. 
\end{proof}

\begin{proposition}\label{prop:pi_b commutes with aut_b}
	Keep the notation of Theorem \ref{theo:blockwise bijection between simple moudles and pairs}. Let $H$ be any subgroup of $\Gamma$. Assume that $R=k$ and $\alpha$ is trivial. Then the bijection $\Pi_b$ commutes with the action of the group $(H\times\Aut(\C))_b$.
\end{proposition}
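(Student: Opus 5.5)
The plan is to treat $\Pi_b$ as the restriction of the bijection $\Pi$ of Theorem \ref{theo:bijection between simple moudles and pairs}, as in the proof of Proposition \ref{prop:pi_b commutes with gamma_b}. We then check equivariance separately for the two factors of $H\times\Aut(\C)$, and finally check that the actions of both factors commute and preserve the relevant sets.

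First we make the action of $(H\times\Aut(\C))_b$ explicit. An element $(\sigma,\FF)$ acts on $\S(k\C)$ by $[S]\mapsto[{}^\sigma({}_\FF S)]$. It acts on pairs by ${}^{(\sigma,\FF)}(e,T)=(\FF(e),{}^\sigma({}_{\FF_e}T))$, using Propositions \ref{prop: Galois action on pairs} and \ref{prop: Aut action on pairs}.

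These two actions commute. Indeed, $\FF$ is the $k$-linear extension of a map on the $\F_p$-basis $\Mor(\C)$, so $\FF=\Id_k\otimes\FF_0$ on $k\C=k\otimes_{\F_p}\F_p\C$. Hence $\sigma\circ\FF=\FF\circ\sigma$ as $\F_p$-algebra automorphisms. The same tensor argument used in the proof of Proposition \ref{prop: Galois action on pairs} then gives ${}^\sigma({}_\FF U)\cong{}_\FF({}^\sigma U)$, both for $k\C$-modules and, via $\FF_e$, for $kG_e$-modules. So $H\times\Aut(\C)$ genuinely acts on both sides.

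Next we check that the stabiliser preserves both sides of $\Pi_b$. If $(\sigma,\FF)$ fixes $b$, meaning $\sigma(\FF(b))=b$, then ${}^\sigma({}_\FF S)$ is a $k\C b$-module whenever $S$ is. This holds because the twisted module is annihilated by $1-b$: the element $1-b$ acts on it through $\FF^{-1}\sigma^{-1}(1-b)=1-b$. On the other side, for $e=\Id_X$ we have $\FF(eb)=\FF(e)\FF(b)$ and $\sigma(eb)=e\sigma(b)$, so $\sigma\FF(eb)=\FF(e)b$. Consequently, if $T$ is a $kG_e(eb)$-module, then ${}^\sigma({}_{\FF_e}T)$ is a $kG_{\FF(e)}(\FF(e)b)$-module.

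It is worth isolating the one point needing care. The map $\FF_e:kG_e\to kG_{\FF(e)}$ must be the restriction of $\FF:k\C\to k\C$ to the subalgebra $kG_e$. This is clear because $\FF$ acts on morphisms. It guarantees that twisting by $\FF_e$ is compatible with the block idempotent $eb\in kG_e$ supplied by Proposition \ref{prop:eb is in Ge}.

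Finally, equivariance follows by composing two facts: $\Pi$ commutes with $\Gamma$ (Proposition \ref{prop:pi commutes with gamma}, with $\alpha$ trivial) and with $\Aut(\C)$ (Proposition \ref{prop:pi commutes with aut}). Therefore
\[
\Pi\bigl([{}^\sigma({}_\FF S)]\bigr)={}^\sigma\bigl(\Pi([{}_\FF S])\bigr)={}^\sigma\bigl({}^\FF\Pi([S])\bigr)={}^{(\sigma,\FF)}\Pi([S]).
\]
Restricting to $\S(k\C b)$ gives the claim for $\Pi_b$.

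The main obstacle is purely bookkeeping. One must verify that the combined action is well defined on isomorphism classes of pairs, which follows from the two propositions above together with the commutation $\sigma\FF=\FF\sigma$. One must also check that the stabiliser condition on $b$ lands in the correct block for every $e$, which was handled by the computation $\sigma\FF(eb)=\FF(e)b$.
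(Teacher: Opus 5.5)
Your proposal is correct and follows the paper's argument: $\Pi_b$ is the restriction of $\Pi$ to $\S(k\C b)$, so equivariance follows by combining Propositions \ref{prop:pi commutes with gamma} and \ref{prop:pi commutes with aut}. Your extra checks are details the paper leaves implicit: that the Galois and automorphism twists commute, and that the stabiliser of $b$ preserves $\S(k\C b)$ and the blocks $kG_e(eb)$.
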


\begin{proof}
	Since $\Pi_b$ is a part of the bijection $\Pi$ in Theorem \ref{theo:bijection between simple moudles and pairs}, the statement follows from Propositions \ref{prop:pi commutes with gamma} and \ref{prop:pi commutes with aut}.
\end{proof}

\begin{remark}\label{remark: aut and associated to a block}
	{\rm Let $G$ be a finite group, $P$ a $p$-subgroup of $G$. Now we denote by $\br_P^{kG}:(kG)^P\to kC_G(P)$ the map sending $\sum_{g\in G}\alpha_g g$ (where $\alpha_g\in k$) to $\sum_{g\in C_G(P)}\alpha_g g$.
		Let $b$ be a central idempotent of $k G$, $c$ a block of $k N_G(P)$ and $U$ an indecomposable $k N_G(P)$-module. By Definition \ref{defi:associated to a block}, $c$ (resp. $U$) {\it is associated to} $b$ if $\br_P^{k G}(b)c\neq 0$ (resp. $\br_P^{k G}(b)U\neq 0$).  By Proposition \ref{prop:Brauer map and central idempotents} (iv), there is a unique block of $k G$ to which $c$ (resp. $U$) is associated. Let $G'$ be another finite group and $\FF:G\cong G'$ a group isomorphism. Then $\FF$ extends $k$-linearly to a $k$-algebra isomorphism $kG\cong kG'$ which in turn restricts to $k$-algebra isomorphisms $kC_G(P)\cong kC_{G'}(\FF(P))$ and $kN_G(P)\cong kN_{G'}(\FF(P))$; we abusively denote these isomorphisms by $\FF$.  The following diagram is obviously commutative:
		$$\xymatrix{ (kG)^P \ar[rr]^{\br_P^{kG}~~} \ar[d]_{\FF} & & kC_G(P)  \ar@{^{(}->}[r]  \ar[d]^{\FF} &     kN_G(P) \ar[d]^{\FF} \\
			(kG')^{\FF(P)} \ar[rr]^{\br_{\FF(P)}^{kG'}~~~}  & & kC_{G'}(\FF(P))  \ar@{^{(}->}[r] &    kN_{G'}(\FF(P))
		}$$
		Therefore, $\FF(c)$ (resp. ${}_{\FF} U$) is associated to $\FF(b)$ if and only if $c$ (resp. $U$) is associated to $b$. Hence the group $\Aut(G)_b$ acts on the set of isomorphism classes of indecomposable $k N_G(P)$-modules associated to $b$. The isomoprhism $\FF:N_G(P)\cong N_{G'}(\FF(P))$ induces a $k$-algebra isomorphism $N_G(P)/P\cong N_{G'}(\FF(P))/\FF(P)$ still denoted by $\FF$. For any weight $(P,T)$ of $k\O_G$, we set ${}_{\FF}(P,T)$ to be $(\FF(P),{}_{\FF}T)$, which is a weight of $k\O_{G'}$. One easily checks that if $(P,T)$ and $(Q,T')$ are isomorphic weights of $k\O_G$, then $(\FF(P),{}_\FF T)$ and $(\FF(Q),{}_\FF T')$ are isomorphic weights of $k\O_{G'}$; we set ${}_\FF[(P,T)]:=[{}_\FF(P,T)]$. 
	}
\end{remark}

\begin{void}\label{defi: action of aut on weights of orbit category}
{\rm (i) Let $\C$ be a finite category and $\FF\in \Aut(\C)$. Then $\FF$ induces an automorphism $\T_{\FF}$ of $\T_\C$ sending an object $(X,P)$ in $\T_\C$ to $(\FF(X),\FF(P))\in \T_\C$ with the obvious maps induced by $\FF$ on morphism sets. Similarly, $\FF$ induces an automorphism $\O_{\FF}$ of $\O_\C$ sending an object $(X,P)$ in $\O_\C$ to $(\FF(X),\FF(P))\in \O_\C$. By Proposition \ref{prop: Aut action on pairs}, the automorphism $\O_{\FF}$ acts on the set of (isomorphism classes of) weights of $k\O_\C$. Hence we can define an action of $\Aut(\C)$ on the set of (isomorphism classes of) weights of $k\O_\C$ by letting $\FF$ act as $\O_{\FF}$. With the notation of Remark \ref{lemma:weights of orbit category} (i), we see that a weight of $k\O_\C$ is a pair $(\bar{e}, T)$, where $\bar{e}=P\circ e\circ P$ for some idempotent endomorphism $e$ of some object $X$ in $\C$, $P$ is a not necessarily unitary $p$-subgroup of the moniod $\End_\C(X)$, and $T$ is a projective simple $k N_{G_e}(e\circ P)/(e\circ P)$-module. The automorphism $\FF$ of $\C$ induces a group isomorphism 
	$$N_{G_e}(e\circ P)/(e\circ P)\cong N_{G_{\FF(e)}}(\FF(e)\circ \FF(P))/(\FF(e)\circ \FF(P))$$ which in turn induces a $k$-algebra isomorphism 
	$$kN_{G_e}(e\circ P)/(e\circ P)\cong kN_{G_{\FF(e)}}(\FF(e)\circ \FF(P))/(\FF(e)\circ \FF(P));$$
	we again abusively denote these isomorphisms by $\FF$. Write $\FF(\bar{e}):=\FF(P)\circ \FF(e)\circ \FF(P)$. Then by definition, we have
	$${}^{\FF}(\bar{e},T)={}^{\O_{\FF}}(\bar{e},T)=(\FF(\bar{e}),{}_{\FF}T).$$
	
(ii) Assume further that $\C$ is a finite EI-category and $b$ is a central idempotent in $k\C$. Assume that $(\bar{e}, T)$ is a $b$-weight of $k\O_\C$, or equivalently, $T$ is associated to the central idempotent $eb$ of $kG_e$. Then by Remark \ref{remark: aut and associated to a block}, ${}_{\FF} T$ is associated to the central idempotent $\FF(eb)=\FF(e)\FF(b)$ of $kG_{\FF(e)}$. This implies that $(\FF(\bar{e}), {}_{\FF}T)$ is an $\FF(b)$-weight of $k\O_\C$. Let $H$ be any subgroup of $\Gamma$. Then the action of $H\times\Aut(\C)$ on the set of (isomorphism classes of) weights of $k\O_\C$ restricts to an action of $(H\times\Aut(\C))_b$ on the set of (isomorphism classes of) $b$-weights of $k\O_\C$.
}
\end{void}

\begin{proposition}\label{prop:two conditions}
	
	Let $\C$ be a finite category (resp. EI-category) and $b$ the unit element (resp. a central idempotent) of $k\C$.	
	Then for any idempotent endomorphism $e$ in $\C$, $eb$ is a central idempotent in $k G_e$; see Proposition \ref{prop:eb is in Ge} below. Let $H$ be any subgroup of $\Gamma$. Assume that for any idempotent endomorphism $e$ in $\C$, there is a bijection $\Omega_e:\S(k G_e eb)\to \W(k \O_{G_e},eb)$ fulfilling the following conditions:
	\begin{enumerate}[{\rm (i)}]
		\item For any idempotent $e$ in $\C$ and any simple $kG_e eb$-module $T$, $\Omega_{\FF(e)}({}_{(\sigma,\FF)} [T])={}_{(\sigma,\FF)}\Omega_e([T])$ for any $(\sigma,\FF)\in (H\times\Aut(\C))_b$.
		\item For any isomorphic idempotents $e$ and $f$ in $\C$, we have $\Omega_f\circ\Lambda_{e,f}=\Omega_{e,f}\circ \Omega_e$, where $\Lambda_{f,e}$ is defined as in Notation \ref{notation:lambda_ef} and $\Omega_{e,f}$ is defined as in Proposition \ref{proposition:bijection between isomorphic weights commutes with Galois}.
	\end{enumerate}
	
	Then there exists an $(H\times\Aut(\C))_b$-equivariant bijection $\S(k \C b)\to \W(k\O_{\C},b)$.
\end{proposition}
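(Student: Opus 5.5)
The plan is to build the global bijection by assembling the local bijections $\Omega_e$ along the parametrisations already established. The key point is that, thanks to condition (ii), the result does not depend on the choice of representatives of isomorphism classes of idempotents. First, by Theorem \ref{theo:bijection between simple moudles and pairs} (resp. Theorem \ref{theo:blockwise bijection between simple moudles and pairs} in the EI case), $\Pi_b$ identifies $\S(k\C b)$ with the set of isomorphism classes of pairs $(e,T)$ with $T$ a simple $kG_eeb$-module. Likewise, Remark \ref{lemma:weights of orbit category} together with Lemma \ref{lemma: L14 Lemma 3.5} and Definition \ref{defi:partition of weights by blocks} identifies $\W(k\O_\C,b)$ with the set of isomorphism classes of pairs $(e,[(P,T)])$, where $[(P,T)]\in \W(k\O_{G_e},eb)$. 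Here $(e,[(P,T)])$ is declared isomorphic to $(f,[(Q,T')])$ exactly when $e\cong f$ and $\Omega_{e,f}([(P,T)])=[(Q,T')]$; this is the content of Remark \ref{lemma:weights of orbit category} (iii).

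Next, define $\Psi:\S(k\C b)\to \W(k\O_\C,b)$ by sending $\Pi_b^{-1}[(e,T)]$ to the class of the weight $\Omega_e([T])$, viewed as a weight of $k\O_\C$ via Remark \ref{lemma:weights of orbit category} (ii). To see that $\Psi$ is well defined, suppose $(e,T)\cong(f,T')$. Then $[T']=\Lambda_{e,f}[T]$, so condition (ii) gives $\Omega_f[T']=\Omega_{e,f}(\Omega_e[T])$, and the two images are isomorphic weights of $k\O_\C$. Bijectivity follows as in the proof of Theorem \ref{theorem:main}. If one restricts to a set $\E$ of representatives, $\Psi$ becomes the disjoint union over $e\in\E$ of the bijections $\Omega_e$. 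The targets $\W(k\O_{G_e},eb)$, for $e\in\E$, then exhaust the $b$-weights of $k\O_\C$ without repetition, by Lemma \ref{lemma: L14 Lemma 3.5} (ii), (iii) and Lemma \ref{lemma:maximal subgroups of orbit category}. Blocks are respected because $b$-weights are characterised locally through $eb$, and Proposition \ref{prop: Lambda and Omega compatible with blocks} shows that $\Lambda_{e,f}$ and $\Omega_{e,f}$ preserve the block parts.

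Equivariance is the step needing most care, since an element $(\sigma,\FF)\in(H\times\Aut(\C))_b$ may move $e$ to $\FF(e)$, which need not lie in $\E$. This is exactly why $\Psi$ is defined on all representatives rather than only on $\E$. By Propositions \ref{prop:pi commutes with gamma} and \ref{prop:pi commutes with aut} (combined in Proposition \ref{prop:pi_b commutes with aut_b}), $\Pi_b$ sends ${}_{(\sigma,\FF)}S$ to $(\FF(e),{}_{(\sigma,\FF)}T)$. Then $\Psi({}_{(\sigma,\FF)}S)$ is the class of $\Omega_{\FF(e)}({}_{(\sigma,\FF)}[T])$, which by condition (i) equals ${}_{(\sigma,\FF)}\Omega_e([T])$.

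It remains to check that the action of $(\sigma,\FF)$ on $\W(k\O_\C,b)$, as described in \ref{defi: action of aut on weights of orbit category} and Definition \ref{defi:partition of weights by blocks}, agrees under the identification of Remark \ref{lemma:weights of orbit category} (ii) with the local action on $\W(k\O_{G_e},eb)$. This amounts to unwinding definitions: $\FF$ sends $(e,P,P,T)$ to $(\FF(e),\FF(P),\FF(P),{}_\FF T)$, and $\sigma$ acts only on $T$. The two actions commute, since $\FF$ is defined over $\F_p$ in the untwisted case. I expect this compatibility check, together with well-definedness via condition (ii), to be the main obstacle. The remaining steps are routine bookkeeping of the kind already done in the proof of Theorem \ref{theorem:main}.
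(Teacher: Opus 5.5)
Your proposal is correct and uses the same ingredients as the paper: the equivariance of $\Pi_b$, condition (i) for the action, condition (ii) for changing representatives, and the identification of $\bigsqcup_{e\in\E}\W(k\O_{G_e},eb)$ with $\W(k\O_\C,b)$. The only difference is bookkeeping. The paper fixes $\E$, transports the $(H\times\Aut(\C))_b$-action back to representatives via $\Lambda_{\FF(e),e'}$ and $\Omega_{\FF(e),e'}$, and invokes (ii) inside the equivariance computation; you instead define the map on all pairs and use (ii) for well-definedness, which is slightly cleaner.
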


\begin{proof}
 We will use the notation of Lemma \ref{lemma: L14 Lemma 3.5}. By Theorem \ref{theo:blockwise bijection between simple moudles and pairs}, there is a bijection
	$$\Pi_b: \S(k \C b)\to \bigsqcup_{e\in \E} \S(k G_e (eb)).$$
	 In this bijection, the left side is an $(H\times\Aut(\C))_b$-set. The right side is an $(H\times\Aut(\C))_b$-set in the following way: for any $(\sigma,\FF)\in (H\times\Aut(\C))_b$ and any $[T]\in \S(k G_e (eb))$, where $T$ is a simple $kG_e(eb)$-module, we see that ${}_{(\sigma,\FF)} T$ is a simple $kG_{\FF(e)}(\FF(e)b)$-module. Let $e'$ be the unique element in $\E$ such that $\FF(e)\cong e'$ in $\C$.  Then we define ${}^{(\sigma,\FF)}[T]:=\Lambda_{\FF(e),e'}([{}_{(\sigma,\FF)}T])$, and this makes $\bigsqcup_{e\in \E} \S(k G_e (eb))$ into an $(H\times\Aut(\C))_b$-set. By Proposition \ref{prop:pi_b commutes with aut_b}, $\Pi_b$ is commuting with the action of $(H\times\Aut(\C))_b$.

By assumption, for any $e\in \E$ there is a bijection 
$$\Omega_e:\S(k_\alpha G_e (eb))\to \W(k\O_{G_e},eb)=\bigsqcup_{P\in \X_e}\mathcal{Z}(kN_{G_e}(P)/P(\overline{\br_P^{k G_e}(eb)})),$$
where the equality is an identification, $\overline{\br_P^{k G_e}(eb)}$ is the image of $\br_P^{kG_e}(eb)\in k N_{G_e}(P)$ in $k N_{G_e}(P)/P$, and $$\mathcal{Z}(k N_{G_e}(P)/P (\overline{\br_P^{k G_e}(eb)}))$$ is the set of isomorphism classes of projective simple $kN_{G_e}(P)/P(\overline{\br_P^{k G_e}(eb)})$-modules.	
	 Hence there is a bijection
	$$\Sigma_b:=\bigsqcup_{e\in \E}\Omega_e:~\bigsqcup_{e\in \E} \S(k G_e (eb))\to \bigsqcup_{e\in\E}\bigsqcup_{P\in \X_e} \mathcal{Z}(k N_{G_e}(P)/P(\overline{\br_P^{k G_e}(eb)})).$$
As we explained, the left side is an $(H\times\Aut(\C))_b$-set. The right side is an $(H\times\Aut(\C))_b$-set in the following way: for any $(\sigma,\FF)\in (H\times\Aut(\C))_b$, $e\in \E$, $P\in \X_e$ and $[T]\in \mathcal{Z}(k N_{G_e}(P)/P (\overline{\br_P^{k G_e}(eb)}))$ (where $T$ is a projective simple $k N_{G_e}(P)/P (\overline{\br_P^{k G_e}(eb)})$-module), we see that ${}_{(\sigma,\FF)} T$ is a projective simple $k N_{G_{\FF(e)}}(\FF(P))/\FF(P) (\overline{\br_{\FF(P)}^{k G_{\FF(e)}}(\FF(e)b)})$-module. Let $e'\in \E$ be the unique element in $\E$ such that $\FF(e)\cong e'$ in $\C$, and $Q$ the unique element in $\X_{e'}$ such that $(\FF(e),\FF(P),\FF(P))\cong (e',Q,Q)$ as idempotent endomorphisms in $\T_\C$; see Lemma \ref{lemma: L14 Lemma 3.5} (ii) for the existence and uniqueness of $Q$. Then we define $${}^{(\sigma,\FF)}[T]:=\Omega_{\FF(e),e'}([(\FF(\bar{e}),{}_{(\sigma,\FF)}T)])\in \W(k\O_{G_{e'}},e'b)=\bigsqcup_{Q\in \X_{e'}}\mathcal{Z}(kN_{G_{e'}}(Q)/Q(\overline{\br_{Q}^{k G_{e'}}(e'b)})),$$ and this makes 
$$\bigsqcup_{e\in\E}\bigsqcup_{P\in \X_e} \mathcal{Z}(k N_{G_e}(P)/P(\overline{\br_P^{k G_e}(eb)}))$$
into an $(H\times\Aut(\C))_b$-set. 

Next we show that $\Sigma_b$ commutes with $(H\times\Aut(\C))_b$. For any $(\sigma,\FF)\in (H\times\Aut(\C))_b$, any $e\in \E$ and any $[T]\in \S(k G_e (eb))$, let $e'$ be the unique element in $\E$ such that $\FF(e)\cong e'$ in $\C$.  Then  
$$\Sigma_b({}^{(\sigma,\FF)}[T])=\Sigma_b(\Lambda_{\FF(e),e'}([{}_{(\sigma,\FF)}T]))=\Omega_{e'}(\Lambda_{\FF(e),e'}([{}_{(\sigma,\FF)}T]))=\Omega_{\FF(e),e'}(\Omega_{\FF(e)}([{}_{(\sigma,\FF)}T]))$$
$$=\Omega_{\FF(e),e'}({}_{\sigma,\FF)}\Omega_e([T]))={}^{(\sigma,\FF)}(\Omega_e([T]))={}^{(\sigma,\FF)}(\Sigma_b([T])),$$
where the third equality holds by the condition (ii), the fourth equality holds by the condition (i), and the fifth equality holds by the definition of the action of $(H\times\Aut(\C))_b$ on $$\bigsqcup_{e\in\E}\bigsqcup_{P\in \X_e} \mathcal{Z}(k N_{G_e}(P)/P(\overline{\br_P^{k G_e}(eb)})).$$
Hence $\Sigma_b$ commutes with $(H\times\Aut(\C))_b$.

 Now we obtain an $(H\times\Aut(\C))_b$-equivariant bijection
$$\Sigma_b\circ \Pi_b:\S(k \C b)\to \bigsqcup_{e\in\E}\bigsqcup_{P\in \X_e} \mathcal{Z}(k N_{G_e}(P)/P(\overline{\br_P^{k G_e}(eb)}))$$
By the last paragraph of the proof of Theorem \ref{theorem:main2}, the right side corresponds to a set of representatives of the isomorphism classes of $b$-weights of $k \O_\C$. This completes the proof.
\end{proof}

\begin{proposition}\label{prop:Satisfy condition i}
	Let $\C$ be a finite category (resp. EI-category) and $b$ the unit element (resp. a central idempotent) of $k\C$.	
	Then for any idempotent endomorphism $e$ in $\C$, $eb$ is a central idempotent in $k G_e$;  see Proposition \ref{prop:eb is in Ge}. Let $H$ be any subgroup of $\Gamma_b$. Assume that for any idempotent endomorphism $e$ in $\C$ there is an $(H\times\Aut(G_e))_{be}$-equivariant bijection $\Omega'_e:\S(k G_e eb)\to \W(k \O_{G_e},eb)$. Then we can modify the choices of some bijections and obtain a family of $(H\times\Aut(G_e))_{be}$-equivariant bijections $\{\Omega_e:\S(k G_eeb)\to \W(k \O_{G_e},eb)\}$ satisfying the following two conditions in Proposition \ref{prop:two conditions}: 
	
\begin{enumerate}[{\rm (i)}]
	\item 	For any idempotent $e$ in $\C$ and any simple $kG_e eb$-module $T$, $\Omega_{\FF(e)}({}_{(\sigma,\FF)} [T])={}_{(\sigma,\FF)}\Omega_e([T])$ for any $(\sigma,\FF)\in (H\times\Aut(\C))_b$.
	\item  For any objects $X,Y$ in $\C$ and any isomorphic idempotents $e\in \End_\C(X)$, $f\in \End_\C(Y)$, we have $\Omega_f\circ\Lambda_{e,f}=\Omega_{e,f}\circ \Omega_e$, where $\Lambda_{f,e}$ is defined as in Notation \ref{notation:lambda_ef} and $\Omega_{e,f}$ is defined as in Proposition \ref{proposition:bijection between isomorphic weights commutes with Galois}.
\end{enumerate}	
\end{proposition}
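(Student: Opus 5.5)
The approach is the standard ``transport of structure along orbit representatives'' argument. I consider the equivalence relation on idempotent endomorphisms of $\C$ generated by isomorphism of idempotents, together with the action of $\Aut(\C)$ via $e\mapsto\FF(e)$. Call a class of this relation a $\Theta$-class. Both conditions (i) and (ii) only relate bijections $\Omega_e$ within one $\Theta$-class, so I may treat each $\Theta$-class separately.

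In each $\Theta$-class I choose a representative $e_0$ and put $\Omega_{e_0}:=\Omega'_{e_0}$. Let $N$ be the stabiliser of the isomorphism class of $e_0$ in $(H\times\Aut(\C))_b$, i.e.\ the pairs $(\sigma,\FF)$ with $\FF(e_0)\cong e_0$.

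\textbf{Step 1 (compatibility at $e_0$).} I check that for $(\sigma,\FF)\in N$ the composite
$$\Lambda_{\FF(e_0),e_0}\circ{}_{(\sigma,\FF)}(-)$$
on $\S(kG_{e_0}e_0b)$, and the composite
$$\Omega_{\FF(e_0),e_0}\circ{}_{(\sigma,\FF)}(-)$$
on $\W(k\O_{G_{e_0}},e_0b)$, are each given by twisting with an element of $H\times\Aut(G_{e_0})$ that fixes $be_0$. To see this, choose $s,t$ realising $\FF(e_0)\cong e_0$. The map $x\mapsto t\circ\FF(x)\circ s$ is then a group automorphism $\psi$ of $G_{e_0}$. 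By Proposition~\ref{prop:isomorphic idempotents}, with trivial $\alpha$, $\Lambda$ is twisting by the isomorphism $y\mapsto t y s$, so the first composite is twisting by $(\sigma,\psi)$. By Proposition~\ref{propositon:NGe(eP)/eP} and Remark~\ref{remark: aut and associated to a block}, the same element $(\sigma,\psi)$ describes the second composite. Finally, $\psi(e_0b)=e_0b$ because $b$ is fixed by $(\sigma,\FF)$, using the argument of Proposition~\ref{proposition:varphi(hat(e)b)=hat(f)b}. Since $\Omega'_{e_0}$ is $(H\times\Aut(G_{e_0}))_{be_0}$-equivariant, it follows that $\Omega_{e_0}$ intertwines these two actions of $N$.

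\textbf{Step 2 (definition on the class).} Every $e$ in the $\Theta$-class satisfies $e\cong\FF(e_0)$ for some $(\sigma,\FF)\in(H\times\Aut(\C))_b$. I define
$$\Omega_e:=\Omega_{\FF(e_0),e}\circ{}_{(\sigma,\FF)}\circ\Omega_{e_0}\circ{}_{(\sigma,\FF)}^{-1}\circ\Lambda_{e,\FF(e_0)}.$$
Here the twisting by $(\sigma,\FF)$ maps $\S(kG_{e_0}e_0b)$ to $\S(kG_{\FF(e_0)}\FF(e_0)b)$, and similarly on weights, by Definition~\ref{defi: action of aut on weights of orbit category}~(ii).

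\textbf{Step 3 (well-definedness).} The definition must not depend on the choice of $(\sigma,\FF)$. Two choices differ by an element of $N$, so Step~1, combined with the cocycle identities $\Lambda_{e,f}\circ\Lambda_{d,e}=\Lambda_{d,f}$ and Proposition~\ref{proposition:bijection between isomorphic weights commutes with Galois}~(iv), shows that both choices give the same $\Omega_e$. For this I also need that $\Lambda$ and $\Omega_{-,-}$ commute with twisting by $(\sigma,\FF)$, i.e.\ $\FF$ transports the isomorphism $\varphi$ built from $s,t$ to the one built from $\FF(s),\FF(t)$. This is a direct check from the explicit formulas, with the Galois part handled by Notation~\ref{notation:lambda_ef}.

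\textbf{Step 4 (the conditions).} Once well-definedness is established, condition (ii) holds by construction: compute $\Omega_f$ using the chosen representative for $e$ and the same $(\sigma,\FF)$. Condition (i) follows by composing representatives, since $(\sigma',\FF')(\sigma,\FF)$ is a valid choice for $\FF'(e)$. Each $\Omega_e$ remains $(H\times\Aut(G_e))_{be}$-equivariant, because it is obtained from $\Omega'_{e_0}$ by transport of structure.

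The main obstacle is Step~1, specifically verifying that the same automorphism $\psi$ governs both the simple-module side and the weight side. This requires the compatibility of $\FF$ with the Brauer maps and with the isomorphisms of normalizer quotients.
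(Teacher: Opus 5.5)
Your argument is correct, but it is organised differently from the paper's.

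\textbf{The paper's route.} The paper uses a much coarser equivalence relation on all idempotents: $e\sim f$ if there is an \emph{abstract} group isomorphism $\psi:G_e\cong G_f$, not necessarily coming from $\C$, and some $\sigma\in H$ with $\psi(eb)=f\sigma^{-1}(b)$. It fixes representatives, sets $\Omega_e=\Omega'_e$ on them, and puts $\Omega_f:={}_{(\sigma,\psi)}\circ\Omega_e\circ{}_{(\sigma,\psi)}^{-1}$ for any admissible $(\sigma,\psi)$. Independence of the choice is then immediate from the $(H\times\Aut(G_e))_{eb}$-equivariance of $\Omega'_e$, since two choices differ by an element of that group. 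Conditions (i) and (ii) follow because $\FF_e$ and $x\mapsto s\circ x\circ t$ are themselves admissible isomorphisms, and $\Lambda_{e,f}$ and $\Omega_{e,f}$ are exactly transport along the latter.

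\textbf{Your route.} Your classes are the orbits of $(H\times\Aut(\C))_b$ on isomorphism classes of idempotents. That finer choice is what forces Step~1 (the stabiliser $N$ acts through $(H\times\Aut(G_{e_0}))_{be_0}$), together with the naturality and cocycle bookkeeping of Step~3. The paper avoids all of this.

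\textbf{What each buys.} The paper's version is shorter. Your version makes visible that well-definedness and conditions (i), (ii) use only the equivariance of $\Omega'_{e_0}$ with respect to automorphisms of $G_{e_0}$ induced from $\C$. Full $\Aut(G_{e_0})$-equivariance is needed only to transfer equivariance to each $\Omega_e$. Your version also invokes $\Lambda_{-,-}$ and $\Omega_{-,-}$ only between genuinely isomorphic idempotents. By contrast, the paper's final step for condition (ii) writes $\Omega_{d,e}$ and $\Lambda_{e,d}$ for $d$ merely $\sim$-related to $e$, which has to be read as transport along an admissible isomorphism.

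\textbf{One correction.} Your $\Theta$-classes must be generated by isomorphism and by those $\FF$ with $(\sigma,\FF)\in(H\times\Aut(\C))_b$, not by all of $\Aut(\C)$. Otherwise, when $b\neq 1$, the Step~2 claim that every $e$ in the class satisfies $e\cong\FF(e_0)$ for some $(\sigma,\FF)\in(H\times\Aut(\C))_b$ can fail. In that case $\Omega_e$ would be undefined for some $e$.
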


\begin{proof}
Let $\I$ be the set of all idempotent endomorphisms in $\C$. Define a relation on the set $\I$ by $e\sim f$, if there exists a group isomorphism $\psi:G_e\cong G_f$ such that the induced $k$-algebra isomorphism $\psi:kG_e\cong kG_f$ satisfying $\psi(eb)=f\sigma^{-1}(b)$ for some $\sigma\in H$. For convenience, we say that the pair $(\sigma,\psi)$ belongs to $(H\times \Iso(G_e,G_f))_b$. One easily checks that the relation $\sim$ is an equivalence relation. By Proposition \ref{proposition:varphi(hat(e)b)=hat(f)b}, if $e$ and $f$ are isomorphic idempotent endomorphisms in $\C$, then $e\sim f$. Also, if $e$ and $f$ are idempotent endomorphisms in $\C$ such that $f=\FF(e)$ for some $(\sigma,\FF)\in (H\times\Aut(\C))_b$, then $e\sim f$.
Let $\I_0\subseteq \I$ be a set of representatives of the equivalence classes of the relation $\sim$. For any $e\in \I_0$, we set $\Omega_e:=\Omega'_e$. Let $f$ be an arbitrary idempotent endomorphism in $\C$. There exists a unique idempotent $e\in \I_0$ such that $e\sim f$. By definition, there is a group isomorphism $\psi:G_e\cong G_f$ with the induced $k$-algebra isomorphism $\psi:kG_e\cong kG_f$ satisfying $\psi(eb)=f\sigma^{-1}(b)$ for some $\sigma\in H$. By Remark \ref{remark: aut and associated to a block}, if $(P,T)$ is a weight of $k\O_{G_e}$, then $(\psi(P),{}_\psi T)$ is a weight of $k\O_{G_f}$. Now we see that if  $(P,T)$ is an $eb$-weight of $k\O_{G_e}$, then $(\psi(P),{}^\sigma({}_\psi T))$ is an $fb$-weight of $k\O_{G_f}$. For any simple $k G_f fb$-module $T$,  we define $\Omega_f([T])$ to be ${}_{(\sigma,\psi)}\Omega_e({}_{(\sigma^{-1},\psi^{-1})}[T]):={}^\sigma({}_\psi\Omega_e({}_{\psi^{-1}}[{}^{\sigma^{-1}}T]))$. Since $\Omega_e$ is $(H\times\Aut(G_e))_b$-equivariant, $\Omega_f$ is independent of the choice of the pair $(\sigma,\psi)$. One easily checks that $\Omega_f$ is a bijection from $\S(k G_f fb)$ to $\W(k \O_{G_f},fb)$. We claim that $\Omega_f$ is $(H\times\Aut(G_f))_{fb}$-equivariant. Let $(\sigma',\FF)\in (H\times\Aut(G_f))_{fb}$. Then $(\sigma^{-1},\psi^{-1})\circ(\sigma',\FF)\circ (\sigma,\psi)\in (H\times\Aut(G_e))_{eb}$.  For any simple $k G_f fb$-module $T$, we have
\begin{align*}
	\begin{split}
		\Omega_f({}_{(\sigma',\FF)}[T])&={}_{(\sigma,\psi)}\Omega_e({}_{(\sigma^{-1},\psi^{-1})\circ (\sigma',\FF)}T)={}_{(\sigma,\psi)}\Omega_e({}_{(\sigma^{-1},\psi^{-1})\circ (\sigma',\FF)\circ(\sigma,\psi)\circ (\sigma^{-1},\psi^{-1})}T)\\
		&={}_{(\sigma,\psi)\circ (\sigma^{-1},\psi^{-1})\circ (\sigma',\FF)\circ(\sigma,\psi)}\Omega_e({}_{(\sigma^{-1},\psi^{-1})}T)={}_{(\sigma',\FF)\circ(\sigma,\psi)}\Omega_e({}_{(\sigma^{-1},\psi^{-1})}T)\\
		&={}_{(\sigma',\FF)}\Omega_f([T]),
	\end{split}
\end{align*}
as claimed. Next we show that the family of bijections $\{\Omega_e:\S(k G_eeb)\to \W(k \O_{G_e},eb)\}$ satisfies the required two conditions:
	
\textbf{Condition (i)}. Now let $e$ be an arbitrary idempotent endomorphism in $\C$. Let $e_0$ be the unique element in $\I_0$ such that $e\sim e_0$ via some pair $(\sigma_0,\psi_0)\in (H\times\Iso(G_{e_0},G_e))_b$.  Let $(\sigma,\FF)$ be an arbitrary element of $(H\times\Aut(\C))_b$. Write $\sigma'=\sigma\circ \sigma_0$ and $\psi'=\FF\circ \psi_0$. Then we have $(\sigma',\psi')\in (H\times \Iso(G_{e_0},G_{\FF(e)}))_b$.  For any simple $kG_e eb$-module $T$, 
\begin{align*}
	\begin{split}
\Omega_{\FF(e)}({}_{(\sigma,\FF)} [T])&={}_{(\sigma',\psi')}\Omega_{e_0}({}_{({\sigma'}^{-1},{\psi'}^{-1})\circ(\sigma,\FF)}T)={}_{(\sigma',\psi')}\Omega_{e_0}({}_{(\sigma_0^{-1},\psi_0^{-1})}T)\\
&={}_{(\sigma',\psi')\circ (\sigma_0^{-1},\psi_0^{-1})\circ (\sigma_0,\psi_0)}\Omega_{e_0}({}_{(\sigma_0^{-1},\psi_0^{-1})}T)\\
&={}_{(\sigma,\FF)}\Omega_e([T])
\end{split}
\end{align*}
	where the first and fourth equalities hold by the definitions of $\Omega_{\FF(e)}$ and $\Omega_e$, respectively. 
	Therefore, the family of bijections $\{\Omega_e:\S(k G_eeb)\to \W(k \O_{G_e},eb)\}$ satisfies condition (i).
	
\textbf{Condition (ii)}. Let us first assume that $e\in \I_0$. Let $s\in f\circ \Hom_\C(X,Y)\circ e$ and $t\in e\circ \Hom_\C(Y,X)\circ f$ satisfying $t\circ s=e$ and $s\circ t=f$. Then we have a group isomorphism $\varphi:G_e\cong G_f$ sending $x\in G_e$ to $s\circ x\circ t$. By Proposition \ref{proposition:varphi(hat(e)b)=hat(f)b}, we have $({\rm Id}_k,\varphi)\in (H\times \Iso(G_e,G_f))_b$. For any simple $k G_f fb$-module $T$, by definition of $\Omega_f$, we have 
$$\Omega_f([T])={}_{(\Id_k,\varphi)}\Omega_e({}_{(\Id_k,\varphi^{-1})}T)={}_{\varphi}\Omega_e({}_{\varphi^{-1}}T)=\Omega_{e,f}\circ\Omega_e\circ \Lambda_{f,e}([T])$$
where the third equality holds by the definitions of $\Lambda_{f,e}$ and $\Omega_{e,f}$. Now let us remove the assumption that $e\in \I_0$. Since $e$ and $f$ are isomorphic idempotents, there is a unique idempotent $d\in \I_0$ such that $e\sim d\sim f$. Then by the argument above, we have $\Omega_e:=\Omega_{d,e}\circ \Omega_d\circ \Lambda_{e,d}$ and $\Omega_f:=\Omega_{d,f}\circ \Omega_d\circ \Lambda_{f,d}$. It follows that 
$$\Omega_f=\Omega_{d,f}\circ \Omega_{e,d}\circ \Omega_e\circ \Lambda_{d,e} \circ \Lambda_{f,d}=\Omega_{e,f}\circ \Omega_e\circ \Lambda_{f,e}.$$ 
Equivalently, $\Omega_f\circ\Lambda_{e,f}=\Omega_{e,f}\circ \Omega_e$.
Therefore, condition (ii) holds.
\end{proof}

\begin{proof}[Proof of Theorem \ref{theorem:main3}]
This follows from Propositions \ref{prop:two conditions} and \ref{prop:Satisfy condition i}.
\end{proof}

\bigskip\noindent\textbf{Acknowledgements.}\quad When writing this paper, the author is a visitor at City St George's, University of London supported by China Scholarship Council (202506770066) from 2026 to 2028. The author would like to thank Prof. Markus Linckelmann for some very helpful discussions and thank City for its hospitality and comfortable working environment. The author acknowledges support from National Natural Science Foundation of China (12471016), China Postdoctoral Science Foundation (GZC20262006, 2025T001HB), and Fundamental Research Funds for the Central Universities (CCNU24XJ028).

\bigskip
{\footnotesize School of Mathematics and Statistics, Central China Normal University, Wuhan 430079, China 
	
	Email address: xinhuang@mails.ccnu.edu.cn}

\end{document}